\tikzstyle{nodo}=[circle,draw,fill,inner sep=0pt,minimum size=%
\tikzstyle{infinito}=[circle,inner sep=0pt,minimum size=0mm]
\newcommand\R{{\mathbb R}}
\newcommand\N{{\mathbb N}}
\newcommand\Hmu{{H_\mu^1}}
\newcommand\T{\mathcal T}
\newcommand\LL{\mathcal L}
\newcommand\f{\frac}
\newcommand\dx{{\,dx}}
\newcommand{\uLp}{\|u\|_p}
\newcommand{\uLtwo}{\|u\|_2}
\newcommand{\udot}{\|u'\|_2}
\newcommand\Lp{{\Lambda^+}}
\newcommand\Lm{{\Lambda^-}}
\newcommand\elevel{{\mathcal E}}
\newcommand\ee{{\mathcal E}}
\newcommand\II{{\mathcal I}}
\newcommand\G{\mathcal G}
\newcommand\K{\mathcal K}
\newcommand{\uell}{u_\ell}
\newcommand{\vell}{v_\ell}\newcommand{\All}{{A_\ell}}
\newcommand{\Gell}{{\G_\ell}}
\newcommand{\Gz}{{\G_0}}
\newcommand\vv{\textsc{v}}
\newcommand\ww{\textsc{w}}
\newcommand\er{\tt r}
\newcommand\es{\tt s}
\newcommand\ti{\tt t}
\newcommand\ed{\tt e}
\newcommand\hh{\tt h}
\newcommand{\Vuno}{{V_{\ti}}}
\newcommand{\Vdue}{{V_{\es}}}
\newcommand{\Funo}{{\mathcal F_{\ti}}}
\newcommand{\Fdue}{{\mathcal F_{\es}}}
\newcommand\eps{\varepsilon}
\newtheorem{theorem}{Theorem}[section]
\newtheorem{proposition}[theorem]{Proposition}
\newtheorem{lemma}[theorem]{Lemma}
\theoremstyle{remark}
\newtheorem{remark}[theorem]{Remark}
\newtheorem*{remark*}{Remark}
\theoremstyle{definition}
\newtheorem{definition}[theorem]{Definition}
\date{}
\title{Uniqueness and non--uniqueness of prescribed mass \\ NLS ground states on metric graphs}
\author{Simone Dovetta$^\sharp$, Enrico Serra$^\dagger$,
Paolo Tilli$^\dagger$ \\ \ \\
{\small$^\sharp$Istituto di Matematica Applicata e Tecnologie Informatiche "E. Magenes"} \\ {\small Consiglio Nazionale delle Ricerche,
via Adolfo Ferrata, 1, ​27100 Pavia, Italy}
\\ \ \\{\small$^\dagger$Dipartimento di Scienze
Matematiche ``G.L. Lagrange'', Politecnico di Torino } \\ {\small
Corso Duca degli Abruzzi, 24, 10129 Torino, Italy}}
\begin{document}

\maketitle

\begin{abstract} We consider the problem of uniqueness of ground states of prescribed mass for the Nonlinear Schr\"odinger Energy with power nonlinearity on noncompact metric graphs. We first establish that the Lagrange multiplier appearing in the NLS equation is constant on the set of ground states of mass $\mu$, up to an at most countable set of masses. Then we apply this result to obtain uniqueness of ground states on two specific noncompact graphs. Finally we construct a graph that admits at least two ground states with the same mass having different Lagrange multipliers. Our proofs are based on careful variational arguments and rearrangement techniques, and hold both for the subcritical range $p\in (2,6)$ and in the critical case $p=6$.

\end{abstract}

\noindent{\small AMS Subject Classification: 35R02, 35Q55, 49J40, 58E30.
}
\smallskip

\noindent{\small Keywords: metric graph, NLS energy, ground state, uniqueness, Lagrange multipliers, rearrangement.}

\section{Introduction}
\label{sec:intro}
	
This paper is devoted to the study of uniqueness of ground states of prescribed mass for the Nonlinear Schr\"odinger Equation on noncompact metric graphs. 

A metric graph with a finite number of edges is called noncompact if at least one edge is unbounded (i.e. it is a half-line). Given one such graph $\G$ we consider the NLS energy functional

\begin{equation}
\label{NLSe}
E (u,\G) =\frac 1 2 \int_\G |u'|^2\dx -\frac 1 p \int_\G |u|^p\dx
\end{equation}
together with the {\em mass constraint}
\begin{equation}
\label{mass}
\| u \|^2_{L^2 (\G)} \ = \ \mu
\end{equation}
and we focus on functions that satisfy the constraint \eqref{mass} and minimize the energy functional $E$.
Throughout the paper the exponent $p\in (2,6]$ is fixed,
while the mass $\mu$ is a parameter of the problem.

In this framework, by a ``ground state of mass $\mu$'' we mean a solution to
the minimization problem
\begin{equation*}
\label{introminprob}
\ee_\G(\mu) := \inf \left\{ E(u,\G)   \; : \; u \in H^1(\G),\; \Vert u\Vert_{L^2(\G)}^2=\mu\right\}
\end{equation*}
and we call $\ee_\G(\mu)$ the ground state level at mass $\mu$. 
Ground states solve the stationary NLS equation
\begin{equation}
\label{introeq}
u'' + |u|^{p-2}u = \lambda u
\end{equation}
on every edge of $\G$, and satisfy the Kirchhoff conditions at every  vertex (namely, the sum of the outgoing derivatives of $u$ at every vertex is zero). The number $\lambda = \lambda(u)$ appearing in \eqref{introeq}, which will play a very important role in this paper, is interpreted as a Lagrange multiplier, that arises due to the mass constraint \eqref{mass}.

Schr\"odinger equations on metric graphs have been extensively investigated through the years, both in the linear setting (see the seminal paper \cite{KS} and for instance \cite{BKKM, BLS,EFK,EP,KKMM} and references therein for recent developments) and in the nonlinear one (we refer to the two reviews \cite{ASTparma,N} for detailed discussions on the topic and a detailed bibliography, as well as the references below).

The existence of constrained critical points of \eqref{NLSe} on metric graphs is nowadays fairly well understood for a wide class of graphs (see \cite{BMP,dovjde,g,mp} for compact graphs, \cite{acfn1,acfn2,ast, ast2, ast3,CFN2,KMPX,NP,nps,PSV} for noncompact graphs with half--lines, \cite{ad, adr,adst,dovetta,pa,ps} for periodic graphs and \cite{dst} for metric trees, as well as \cite{dt,dt-p,st,st2,t} for the model involving concentrated nonlinearities). On the contrary, hardly anything is known on the uniqueness of ground states. Up to our knowledge, only few partial results are available in highly specific contexts (see \cite{cds,cfn,NP,nps}). However, in most  cases uniqueness arises as a by--product of some bifurcation analysis, and not as a systematic investigation of the problem.

The lack of uniqueness results is due to a number of reasons. Starting at a general level, it is well known that uniqueness  for nonlinear problems is very hard to obtain. First because uniqueness (modulo symmetries) might actually fail for large classes of problems (see for instance \cite{dancer} and references therein). Secondly, in our opinion, the lack of general techniques and the need to proceed with {\em ad hoc} arguments depending on the specific equation under consideration pose serious difficulties that can only be overcome by a deep understanding of the problem. Moving from the milestone paper \cite{gnn}, many different techniques have been developed in several contexts. Typical uniqueness arguments may be based on dimensional reduction (for instance by showing that solutions are ``one dimensional''), or on perturbative arguments, when a parameter is involved, or on nondegeneracy properties, or on scaling when the domain permits that, or on blow-up techniques and so on. At times, a simple device as that of subtracting equations and operating on the difference of solutions may yield fruitful information leading to uniqueness.

If one is interested in establishing uniqueness for NLS ground states of prescribed mass, as in this paper, then basically none of the preceding arguments works. The domain has in general no symmetries, the problem is not perturbative  and scaling is not permitted since it alters the lengths of the edges. A further, and fundamental, difficulty is caused by the presence of the mass constraint, which prohibits many of the manipulations on ground states that one could try. Even worse, two ground states with the same mass may solve {\em different} equations, since nothing guarantees that two such ground states should share the same Lagrange multiplier.

For these reasons, in general, we do not expect that ground states with fixed mass are unique. The problems outlined above and the topology of a generic graph, which may be extremely complicated, might well lead to the coexistence of ground states with the same mass, as these could be localized on faraway parts of the graph, and minimize the energy, so to speak, by completely different reasons.

In this paper we make a first attempt to bring order in these questions, by addressing a number of problems. We now describe our main results, postponing precise statements and definitions to the next Section.

In our first result we work in an interval $J$ of masses for which existence of ground states is guaranteed. Theorem \ref{thm1} establishes some relations between the set of ground states $u$ of mass $\mu$, called $M(\mu)$, the possible values of $\lambda = \lambda(u)$ on  this set  and the derivative of the ground state energy level  $\ee_\G(\mu)$. More precisely,  it turns out that $\lambda(u)$ is constant on $M(\mu)$, except possibly for a countable set $Z$ of masses; that is to say, ground states with the same mass (outside $Z$) need not be unique, but they all share the same Lagrange multiplier. This allows us to define a function $\overline \lambda : J\setminus Z \to \R$ by setting $\overline \lambda(\mu)$ equal to $\lambda(u)$, for any $u\in M(\mu)$. We prove, among other things, that  $\overline \lambda$ is strictly increasing and that $\ee_\G'(\mu) =-\overline \lambda(\mu)/2$. This result serves as the starting point for the rest of the paper.
We would like to point out that this first result is ``abstract'' in nature, since it does not use specifically any property of graphs. For this reason, it can be adapted to other context such as, for example, that of  NLS ground states on subsets of $\R^N$.

Next we consider two specific examples of graphs (see Figure \ref{fig-NlT}). For each of these we prove, in Theorems \ref{thm2} and \ref{thm3}, that outside the countable set $Z$, ground states of mass $\mu$ are indeed unique. To our knowledge these are the first general uniqueness results for NLS ground states on graphs. Furthermore, we highlight the fact that Theorem \ref{thm3} establishes uniqueness of ground states of prescribed mass at the $L^2$--critical power $p=6$ and for a whole interval of masses. This phenomenon appears to be remarkably new and in sharp contrast with the usual features of the critical regime on the real line (see Remark \ref{remsol}).

The final result addresses the question of the existence of the set $Z$, and its proof is the most involved part of the paper. In  Theorem \ref{thm4} we construct a graph $\G$ (Figure \ref{grafone}), for which the set $Z$ is not empty, thus showing that Theorem \ref{thm1} cannot be substantially improved in general. In particular, we prove that for every $\mu$ there exist suitable values of the lengths of the edges of  $\G$ for which two ground states of mass $\mu$ exist on $\G$ and furthermore, these two ground states have different Lagrange multipliers.

The proofs of the main theorems use several results established in preceding papers. Whenever possible we have tried to make the proofs self-consistent. In some other cases we refer the reader to the papers where the needed results were proved.
\medskip

The structure of the paper is as follows. In Section \ref{state} we introduce notation, the precise setting, the statement of the main results and we review some of the fundamental tools that are used throughout the paper. Section \ref{compactsec} collects the main compactness properties of minimizing sequences that are used in the rest of the paper.
Section \ref{sec:lambda} is devoted to the proof of Theorem \ref{thm1}, while Section \ref{uni} contains the proofs of  the main uniqueness results, Theorems \ref{thm2} and \ref{thm3}. The non-uniqueness result, Theorem \ref{thm4}, is proved in Section \ref{nonuni}. Finally, an Appendix contains some technical results that are needed in the preceding Sections.

\section{Notation, setting and main results}
\label{state}

Throughout this paper, by noncompact metric graph we mean a connected metric graph containing a finite number of edges, at least one of which is unbounded. 
As is always the case in the literature, bounded edges are identified with intervals of the real line, while unbounded edges are identified with (copies of) the interval $[0,+\infty)$. 

The function spaces $L^p(\G)$, $H^1(\G)$ and so on are defined in the usual way (see e.g. Section 2 of \cite{ast} and the monograph \cite{BK}). 
When the underlying graph is clearly understood, we use symbols like $\uLp,\udot, \dots$ to denote $\|u\|_{L^p(\G)}$, $\|u'\|_{L^p(\G)}, \dots$, and likewise for similar expressions involving the dependence on $\G$. Whenever necessary, the graph is instead written explicitly to avoid ambiguity.

Given a noncompact metric graph $\G$ and a number $p\in (2,6]$, we consider the NLS energy functional $E(\,\cdot\,,\G) : H^1(\G) \to \R$ defined by
\begin{equation}
\label{energy}
E(u,\G):=\frac12 \|u'\|_2^2-\frac1p \|u\|_p^p=\frac12 \int_\G|u'|^2\dx-\frac1p \int_\G|u|^p\dx.
\end{equation}
For any $\mu>0$ we introduce the set of {\em mass} $\mu$ functions in $H^1(\G)$, defined by
\[
\Hmu(\G):=\{\,u\in H^1(\G)\,:\,\|u\|_2^2=\mu\,\}
\]
and we set
\begin{equation}
\label{elevel}
\elevel_\G(\mu):=\inf_{u\in\Hmu(\G)}E(u,\G).
\end{equation}
By ``ground state'' of mass $\mu$ we mean any function $u\in H_\mu^1(\G)$ satisfying
\begin{equation}
\label{minprob}
E(u,\G) = \elevel_\G(\mu).
\end{equation}
In studying the minimization problem \eqref{minprob} it is clearly sufficient to work with nonnegative functions, which we will do throughout the paper, without further explicit reference. In particular, when we talk about {\em uniqueness} of ground states, we always mean that the {\em positive} solution to \eqref{minprob} is unique. 

Obviously ground states solve, for some Lagrange multiplier $\lambda \in \R$, the stationary NLS equation
\begin{equation}
\label{equation}
u''+|u|^{p-2}u=\lambda u
\end{equation}
on each edge of $\G$, with Kirchhoff boundary conditions at the vertices (see e.g. \cite{ast}).	

\begin{remark}[Solitons]
\label{remsol} 
When $\G=\R$ the (positive) solutions to  \eqref{minprob}
are called {\em solitons} and, in the $L^2$--subcritical regime $p\in(2,6)$, they  
are unique up to translations. We denote by  $\phi_\mu$
the soliton of mass $\mu$ centered at the origin. It obeys the scaling rule 
\[
\phi_\mu(x)=
\mu^\alpha \phi_1\bigl ( \mu^\beta x\bigr ),\quad
\alpha=\frac 2{6-p},\quad
\beta=\frac {p-2} {6-p},
\]
where $\phi_1(x)=c \mathop{\rm sech}(C x)^{\alpha/\beta}$ with $c, C>0$ depending only on $p$
(note that $\alpha,\beta>0$ when $p\in (2,6)$). Then a direct computation shows that
\[
\elevel_\R(\mu)=
E(\phi_\mu,\R)=-\theta_p \mu^{2\beta+1},\quad
\theta_p:=-E(\phi_1,\R)>0.
\]
When $\G=\R^+$, the unique positive ground state of mass $\mu$ is the ``half soliton'', i.e. $\phi_{2\mu}$
restricted to $\R^+$, so that
\begin{equation}
\label{energiamezzosol}
\elevel_{\R^+}(\mu)=E(\phi_{2\mu},\R^+)=\frac 1 2E(\phi_{2\mu},\R) =-2^{2\beta}\theta_p \mu^{2\beta+1}
\end{equation}
with $\theta_p$ as above. Note that in each case, the ground state level is always negative. 

At the $L^2$--critical exponent $p=6$, when $\G=\R$ there exists a critical mass $\mu_\R=\f{\sqrt{3}\pi}{2}$ such that
\begin{equation}
\label{livelliretta}
\elevel_\R(\mu)=\begin{cases}
0 & \text{if }\mu\leq\mu_\R\\
-\infty & \text{if }\mu>\mu_\R
\end{cases}
\end{equation}
and a family of solitons, invariant under mass-preserving dilations, exists if and only if $\mu=\mu_\R$. When $\G=\R^+$ the behavior is analogous, the threshold value of the mass  being $\mu_{\R^+}=\mu_\R/2$.
\end{remark}

The issue of existence of ground states of prescribed mass for noncompact graphs $\G$ has been addressed in \cite{ast,ast2} for $p\in(2,6)$ and in \cite{ast3} for $p=6$. In the former case, it has been shown (Theorem 3.3 in \cite{ast2}) that the condition
\begin{equation}
\label{suff}
\elevel_\G(\mu) < \elevel_\R(\mu)
\end{equation}
is always sufficient for the existence of ground states. This condition is {\em almost} necessary, since the weak inequality always holds (Theorem 2.2 in \cite{ast}).

When $p=6$, the existence of ground states occurs only for certain classes on graphs  that we now recall and for specific values of the mass. We refer to \cite{ast3} for the terminology appearing in the following definition.

\begin{definition}
\label{AB} 
Let $\G$ be a noncompact graph. We say that $\G$ is 
\begin{itemize}
\item[$i)$] of type $A$, if $\G$ has exactly one half-line and no terminal edge;
\item[$ii)$] of type $B$, if $\G$  has at least two half-lines, no terminal edge and cannot be covered by cycles.
\end{itemize}
\end{definition}

In \cite{ast3} it has been proved that for both types of graph there exists a number $\mu_\G \in [\mu_\R^+, \mu_\R]$, called the {\em critical mass} of the graph, that determines the existence of ground states in the following way.

\begin{theorem}[\cite{ast3}] 
\label{ex6}
Let $p=6$ and let $\G$ be a noncompact graph. Then
\begin{itemize}
\item [i)] if $\G$ is of type $A$, then $\mu_\G = \mu_{\R^+}$ and ground states of mass $\mu$ exist if and only if $\mu \in (\mu_\G,\mu_\R]$;
\item[ii)]  if $\G$ is of type $B$ and $\mu_\G < \mu_\R$, then ground states of mass $\mu$ exist if and only if $\mu \in [\mu_\G,\mu_\R]$,
\end{itemize}
Furthermore,
\begin{equation}
\label{livelli6}
\ee_\G(\mu) \begin{cases} = 0 & \text{ if } \mu \le \mu_\G \\
< 0 & \text{ if } \mu \in (\mu_\G,\mu_\R] \\
=-\infty  & \text{ if } \mu > \mu_\R.
\end{cases}
\end{equation}
\end{theorem}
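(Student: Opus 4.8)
The plan is to combine the sharp Gagliardo--Nirenberg inequality at the critical exponent $p=6$ with a concentration-compactness analysis that is sensitive to the topology of $\G$. The starting observation is that, by placing a nearly optimal soliton profile on one of the half-lines of $\G$, one always has $\elevel_\G(\mu)\le\elevel_\R(\mu)$. Since at $p=6$ the right-hand side equals $0$ for $\mu\le\mu_\R$ and $-\infty$ for $\mu>\mu_\R$ by \eqref{livelliretta}, this immediately gives $\elevel_\G(\mu)=-\infty$ when $\mu>\mu_\R$, settling the last line of \eqref{livelli6}. For the matching lower bound I would invoke the graph version of the critical inequality, $\|u\|_6^6\le K_\G\|u'\|_2^2\|u\|_2^4$, which yields $E(u,\G)\ge\frac12\|u'\|_2^2\bigl(1-K_\G\mu^2/3\bigr)$. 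The best constant $K_\G$ is governed by the asymptotic (half-line) geometry of the graph, and I would define the critical mass $\mu_\G$ precisely as the threshold at which this lower bound stops being nonnegative, i.e.\ $K_\G\mu_\G^2=3$.

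The next step is to pin down $\mu_\G$ for the two classes. For a graph of type $A$, the single half-line dictates the best constant: by the even-reflection trick a mass-$\mu$ function on $\R^+$ corresponds to a mass-$2\mu$ function on $\R$, so the half-line threshold is $\mu_{\R^+}=\mu_\R/2$, and a symmetric rearrangement onto the half-line shows that no other portion of the graph can lower it, giving $\mu_\G=\mu_{\R^+}$. For a graph of type $B$, having at least two half-lines and not being coverable by cycles forces $\mu_\G$ into the interval $[\mu_{\R^+},\mu_\R]$, and locating it exactly is what makes the type $B$ statement more delicate.

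Having established $\elevel_\G(\mu)=0$ for $\mu\le\mu_\G$ and $\elevel_\G(\mu)<0$ for $\mu\in(\mu_\G,\mu_\R]$, I would prove existence exactly where the level is strictly negative. Here the heart of the matter is compactness of minimizing sequences: vanishing is excluded because it would drive the energy to $0$, contradicting $\elevel_\G(\mu)<0$, and the more serious threat of mass escaping along the half-lines (dichotomy) I would rule out through a strict subadditivity inequality relating $\elevel_\G(\mu)$ to $\elevel_\G(\mu-m)+\elevel_\R(m)$; at $p=6$ the escaping mass carries zero energy, so this reduces to strict monotonicity of $\elevel_\G$. Once a minimizing sequence concentrates on a compact core, weak lower semicontinuity produces a minimizer. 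Conversely, for $\mu<\mu_\G$ the level is $0$ and the Gagliardo--Nirenberg lower bound is \emph{strict}, so any minimizer would force $\|u'\|_2=0$, hence $u\equiv0$ — impossible at positive mass; this yields non-existence strictly below the threshold.

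The last and most delicate point is the open-versus-closed alternative at the endpoint $\mu=\mu_\G$, where $\elevel_\G(\mu_\G)=0$. For type $A$ the only candidate profile is a half-soliton on the half-line, but it escapes to infinity and is not attained, so the interval is open at $\mu_\G$; equivalently, the optimal constant $K_\G$ is \emph{not} achieved on $\G$. For type $B$ with $\mu_\G<\mu_\R$, by contrast, the redundant half-lines together with the non-cycle-covering structure provide just enough compactness for the graph Gagliardo--Nirenberg constant to be attained, and the corresponding optimizer is a zero-energy ground state, so the interval is closed there. I expect this borderline compactness analysis to be the main obstacle of the whole proof: the scaling invariance of the $p=6$ nonlinearity removes the coercivity available in the subcritical range, and one must exploit the fine topology of $\G$ — number of half-lines, absence of terminal edges, cycle-covering — to decide whether the infimum is actually attained.
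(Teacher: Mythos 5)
First, a remark on context: the paper does not prove this theorem at all --- it is imported verbatim from \cite{ast3} and used as a black box. So the comparison here is with the strategy of \cite{ast3}, which your sketch does track in outline: define $\mu_\G$ through the optimal Gagliardo--Nirenberg constant via $K_\G\mu_\G^2=3$, identify $\mu_\G=\mu_{\R^+}$ for type $A$ by rearrangement plus concentration on the half-line, get $\ee_\G(\mu)=-\infty$ for $\mu>\mu_\R$ from the soliton test function, and split existence into a negative-level regime and a delicate endpoint analysis. (One small slip: the rearrangement that sends an arbitrary $u$ on a type $A$ graph to the half-line without increasing $\|u'\|_2$ is the \emph{decreasing} rearrangement $u^*$, not the symmetric one, which requires the two-preimage condition and lands on an interval centered at the origin.)

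The genuine gap is in the lower bound for $\mu\in(\mu_\G,\mu_\R]$. Your only quantitative tool is $E(u,\G)\ge\frac12\|u'\|_2^2\bigl(1-K_\G\mu^2/3\bigr)$, and for $\mu>\mu_\G$ the coefficient is negative, so this inequality gives no information whatsoever: it does not show $\ee_\G(\mu)>-\infty$ on $(\mu_\G,\mu_\R]$ (which is part of \eqref{livelli6}), and it does not give boundedness of minimizing sequences there, without which the entire concentration-compactness step ("once a minimizing sequence concentrates on a compact core, weak lower semicontinuity produces a minimizer") cannot even start. This is exactly why the present paper quotes the refined inequality \eqref{modGN}, in which the coefficient of $\|u'\|_2^2$ involves the \emph{line} threshold $\mu_\R$ rather than the graph constant $K_\G$, at the price of the additive error $C\theta^{1/2}$; Lemma \ref{bdedseq} shows how that inequality, and only that inequality, yields boundedness up to $\mu=\mu_\R$. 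Proving such a refinement (Lemma 4.4 of \cite{ast3}) is a substantial piece of the original argument that your plan omits entirely, and without it the finiteness of the level on $(\mu_\G,\mu_\R]$ and the existence statement in that range are both unproved. The remaining steps (strict negativity of the level for $\mu>\mu_\G$ via near-optimizers of the GN inequality, non-existence below $\mu_\G$ because equality would force $u'\equiv0$, openness of the interval at $\mu_\G$ for type $A$ versus attainment of $K_\G$ for type $B$ with $\mu_\G<\mu_\R$) are correctly identified, though the type $B$ endpoint is asserted rather than argued.
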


\begin{remark} 
\label{condlevel}
$i)$ In the preceding Theorem we see that in the critical case $p=6$ one can reverse the point of view and state a sufficient condition for the existence of ground states in terms of the energy instead of the mass. Precisely, as we see from its statement and \eqref{livelliretta}, the condition $\ee_\G(\mu) < \ee_\R(\mu) = 0$ is always sufficient for the existence of ground states. It is however not necessary for graphs of type $B$ and $\mu=\mu_\G$.

$ii)$ It has been proved in \cite{ast2} and \cite{ast3} that $\ee_\G$, as a function of $\mu$, is continuous and concave on $\R^+$ if $p\in (2,6)$ and on $(\mu_\G,\mu_\R]$ if $p=6$. 	
\end{remark}

Two main tools in the study of \eqref{minprob} are ubiquitous. The first is given by the  Gagliardo--Nirenberg inequalities
\begin{equation}
\label{GN}
\uLp^p\leq K\uLtwo^{\f p2+1}\udot^{\f p2-1}, \qquad K=K(\G,p),
\end{equation}
that hold for every $u\in H^1(\G)$ and every $p\geq2$, and their $L^\infty$ version
\[
\|u\|_{\infty}^2\leq K\uLtwo \udot , \qquad K=K(\G).
\]
For a detailed proof of these inequalities, see \cite{ast2}. When $p=6$ these inequalities are no longer sufficient to ensure boundedness of minimizing sequences. Instead, it is convenient to use the following stronger form, proved in Lemma 4.4 in \cite{ast3}: if $\G$ is a noncompact graph with no terminal edge and $\mu \in (0,\mu_\R]$, then for every $u\in H_\mu^1(\G)$ there exists a number $\theta=\theta(u)\in[0,\mu]$ such that
\begin{equation}
	\label{modGN}
	\|u\|_6^6\leq3\left(\f{\mu-\theta}{\mu_\R}\right)^2\|u'\|_2^2+C\theta^{1/2}, \qquad C=C(\G).
\end{equation}
The other tool is provided by the body of rearrangement techniques on graphs, for which we refer to Section 3 of \cite{ast}. Rearrangements of $H^1$ functions on graphs are very frequently used in this paper and the reader is expected to be familiar with them. For the reader's convenience, we recall here the definitions of \emph{decreasing} and \emph{symmetric} rearrangement of a function $u\in H^1(\G)$. Letting for simplicity $u\geq0$, $u\neq0$ on $\G$ and introducing the distribution function
\[
\rho(t):=\left|\{x\in\G\,:\,u(x)>t\}\right|\,,\qquad t\geq0,
\] 
then
\begin{itemize}
	\item[(i)] the decreasing rearrangement $u^*:\left[0,|\G|\right]\to\R$ is 
	\[
	u^*(x):=\inf\{t\geq0\,:\,\rho(t)\leq x\}\,;
	\]
	\item[(ii)] the symmetric rearrangement $\widehat{u}:\left[-|\G|/2,|\G|/2\right]$ is 
	\[
	\widehat{u}(x):=\inf\{t\geq0\,:\,\rho(t)\leq2|x|\}\,,
	\]
\end{itemize}
where $|\G|$ denotes the total length of $\G$. The functions $u^*$ and $\widehat u$ are equimeasurable with $u$, so that all $L^q$ norms are preserved when $u$ is rearranged, and the classical Polya--Szeg\H{o} inequality $\|(u^*)'\|_2 \le \|u'\|_2$ holds. Moreover, if a nonnegative $u\in H^1_\mu(\G)$ has at least $2$ preimages for almost every value in $(0,\|u\|_\infty)$, i.e. if
\[
\#\{x \in \G \; : \; u(x) = \tau \} \ge 2\qquad \text{for almost every } \tau \in (0,\|u\|_\infty),
\]
then $\|(\widehat u)'\|_2 \le \|u'\|_2$.

We also quote, for further reference, Lemma 2.1 of \cite{astbound}: if $p\in(2,6)$ and if a nonnegative $u\in H^1_\mu(\G)$ has at least $N\ge 2$ preimages for almost every value in $(0,\|u\|_\infty)$, then
\begin{equation}
\label{Ncontr}
E(u,\G) \ge - \theta_p \left(\frac2N \right)^{2\beta} \mu^{2\beta+1} = \left(\frac2N \right)^{2\beta}\ee_\R(\mu).
\end{equation}
This inequality still holds when $p=6$ but for $\mu \le \mu_\R$ takes the weaker form $E(u,\G) \ge 0$, since $\ee_\R(\mu) = 0$ in this case.

To state our first result, we introduce the set of ground states with mass $\mu$
\begin{equation}
\label{emme}
M(\mu) = \left\{u \in \Hmu(\G) \; :\;  E(u,\G) = \ee_\G(\mu)\right\}
\end{equation}
and we define the functional  $\LL:H^1(\G)\setminus\{0\} \to\R$  by
\begin{equation}
\label{lambda u}
\LL(u)=\frac{\uLp^p-\udot^2}{\uLtwo^2}.
\end{equation}
Note  that $\LL$ is continuous and that if $u\in H^1(\G)$ is a (nonzero) solution of \eqref{equation} for a certain $\lambda$, then $\LL(u)=\lambda$, as one immediately sees by multiplying \eqref{equation} by $u$ and integrating over $\G$.

Finally, for every $\mu$ such that $M(\mu)\neq\emptyset$ we set
\begin{equation}
\label{Lpm}
\Lm(\mu) =\inf_{u\in M(\mu)} \LL(u), \qquad \Lp(\mu) =\sup_{u\in M(\mu)} \LL(u)\,. 
\end{equation}
Our first result describes the relationships between the ground state energy function $\ee_\G(\mu)$, the set $M(\mu)$ and the Lagrange multipliers associated with elements of $M(\mu)$.

\begin{theorem}
\label{thm1} 
Let $\G$ be a noncompact graph and assume that $J\subset \R^+$ is an interval of masses such that 
\[
\ee_\G(\mu) < \ee_\R(\mu).
\]
($J$ is allowed to  contain $\mu_\G$ if $p=6$ and $\G$ is of type $B$). Then there exists an at most countable set $Z\subset J$ such that, setting
\[
I = J \setminus Z,
\]
\begin{itemize}
\item[i)] for every $\mu \in I$
\[
\Lm(\mu) = \Lp(\mu)\,,
\]
namely $\LL$ is constant on $M(\mu)$; 
\item[ii)] the function $\overline\lambda :I \to \R$ defined by $\overline\lambda (\mu) = \LL(u)$, $u\in M(\mu)$, is strictly increasing;

\item[iii)] for every $\mu \in J$,
\begin{equation}
\label{derlevel}
(\elevel_\G)_+'(\mu) = -\frac12 \Lp(\mu),\qquad \ (\elevel_\G)_-'(\mu) = -\frac12 \Lm(\mu).
\end{equation}
In particular, $\ee_\G$ is differentiable at $\mu$ if and only if  $\mu \in I$, in which case
\[
\elevel_\G'(\mu) = -\frac12 \overline \lambda (\mu).
\]
\end{itemize}
\end{theorem}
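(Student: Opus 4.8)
The plan is to build everything on the concavity of $\mu\mapsto\ee_\G(\mu)$ on $J$ (Remark \ref{condlevel}) together with one elementary device: testing $\ee_\G$ against mass-rescalings of a given ground state. I would \emph{define} $Z$ to be the set of points at which the concave function $\ee_\G$ fails to be differentiable; this set is automatically at most countable, so $I=J\setminus Z$ is by construction the set of differentiability points of $\ee_\G$. With this choice, part (i) and the final assertion of (iii) (``differentiable iff $\mu\in I$'') are immediate consequences of the one-sided formulas in (iii), so the whole theorem reduces to establishing those formulas and the strict monotonicity (ii).

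For the one-sided formulas, fix $u\in M(\mu)$ and set $g(\sigma):=E(\sqrt{\sigma/\mu}\,u,\G)$. Since $\sqrt{\sigma/\mu}\,u$ has mass $\sigma$ we have $\ee_\G(\sigma)\le g(\sigma)$ with equality at $\sigma=\mu$, and a direct computation gives $g'(\mu)=\tfrac1{2\mu}(\udot^2-\uLp^p)=-\tfrac12\LL(u)$. Comparing difference quotients on the two sides of $\mu$ yields the ``easy'' bounds $(\elevel_\G)_+'(\mu)\le-\tfrac12\LL(u)$ and $(\elevel_\G)_-'(\mu)\ge-\tfrac12\LL(u)$, valid for every $u\in M(\mu)$; taking the supremum, resp. the infimum, over $M(\mu)$ gives $(\elevel_\G)_+'(\mu)\le-\tfrac12\Lp(\mu)$ and $(\elevel_\G)_-'(\mu)\ge-\tfrac12\Lm(\mu)$.

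The reverse inequalities are where the real work lies. I would take $\mu_n\downarrow\mu$ and ground states $u_n\in M(\mu_n)$; the compactness results of Section \ref{compactsec} (which is exactly where the hypothesis $\ee_\G<\ee_\R$ on $J$ is used) provide, up to subsequences, strong $H^1$ convergence $u_n\to u$ with $u\in M(\mu)$, whence $\LL(u_n)\to\LL(u)$ by continuity of $\LL$. Because $\ee_\G$ is concave its one-sided derivatives are non-increasing, so for each $n$ we get $(\elevel_\G)_+'(\mu)\ge(\elevel_\G)_-'(\mu_n)\ge-\tfrac12\Lm(\mu_n)\ge-\tfrac12\LL(u_n)$; passing to the limit and using $\LL(u)\le\Lp(\mu)$ gives $(\elevel_\G)_+'(\mu)\ge-\tfrac12\Lp(\mu)$, which closes the first identity. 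The left derivative is treated symmetrically with $\mu_n\uparrow\mu$. This proves (iii), and part (i) follows at once, since at a differentiability point the two one-sided derivatives coincide and therefore force $\Lp(\mu)=\Lm(\mu)$.

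For the strict monotonicity (ii) I would return to $g$ and note that $g''(\sigma)=-\tfrac{1}{2\mu^2}\bigl(\tfrac p2-1\bigr)(\sigma/\mu)^{p/2-2}\uLp^p<0$ for every $p\in(2,6]$, so $g$ is strictly concave and lies strictly below its tangent line at $\mu_1$. Given $\mu_1<\mu_2$ in $I$ and choosing $u\in M(\mu_1)$ (so $\LL(u)=\barl(\mu_1)$), this yields $\ee_\G(\mu_2)\le g(\mu_2)<\ee_\G(\mu_1)-\tfrac12\barl(\mu_1)(\mu_2-\mu_1)$; the resulting secant slope is $<-\tfrac12\barl(\mu_1)$, and comparing it with $\ee_\G'(\mu_2)=-\tfrac12\barl(\mu_2)$ through concavity gives $\barl(\mu_2)>\barl(\mu_1)$. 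The main obstacle throughout is the compactness step: the entire argument depends on passing from ground states of masses $\mu_n\to\mu$ to a ground state of mass $\mu$ \emph{strongly} in $H^1$, so that both $\udot$ and $\uLp$ converge and $\LL$ is continuous along the sequence. This is precisely what the strict inequality $\ee_\G<\ee_\R$ on $J$ secures, and is the reason that hypothesis is imposed; the remainder is soft convex analysis combined with the strict concavity of the rescaling profile $g$.
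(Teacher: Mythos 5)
Your argument is correct, and it is organized rather differently from the paper's proof, so a comparison is worthwhile. The paper's strategy is to first prove a two\nobreakdash-ground\nobreakdash-state comparison lemma (Lemma \ref{monot}): normalizing $u_i\in M(\mu_i)$ to unit mass and playing the two minimality inequalities against each other yields $\Lp(\mu_1)<\Lm(\mu_2)$ whenever $\mu_1<\mu_2$. This makes $\Lp$ and $\Lm$ strictly increasing on all of $J$, so the paper can take $Z$ to be the (countable) set of discontinuities of $\Lp$ and obtain part (i) by squeezing $\Lm$ and $\Lp$ between values at nearby masses; part (iii) is then proved by the same rescaling-plus-compactness computation you use. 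You instead take $Z$ to be the non\nobreakdash-differentiability set of the concave function $\ee_\G$, prove (iii) first, read off (i) from the equality of one-sided derivatives, and obtain (ii) from the strict concavity of the profile $g(\sigma)=E(\sqrt{\sigma/\mu}\,u,\G)$ combined with the concavity of $\ee_\G$. Both routes rest on the same two pillars — the rescaling computation $g'(\mu)=-\tfrac12\LL(u)$ and the strong $H^1$ compactness of ground states with converging masses (Proposition \ref{newcomp}), which is exactly where $\ee_\G<\ee_\R$ enters — but your replacement of Lemma \ref{monot} by the strict concavity of $g$ is a genuinely more elementary way to get strict monotonicity, at the price of obtaining it only on $I$ (where $\ee_\G'(\mu_2)=-\tfrac12\barl(\mu_2)$ is available) rather than the stronger separation $\Lp(\mu_1)<\Lm(\mu_2)$ on all of $J$ that the paper's lemma provides; the latter is not needed for the statement, so nothing is lost. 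Two small points you should make explicit: at the endpoints of $J$ only one of the two formulas in \eqref{derlevel} is meaningful and your approximating sequence $\mu_n\downarrow\mu$ (resp.\ $\mu_n\uparrow\mu$) must stay inside $J$ so that $M(\mu_n)\neq\emptyset$ — the paper likewise restricts to $I=J\setminus(Z\cup\partial J)$ for this reason; and in your limit step one should note that the subsequence argument applies to an arbitrary sequence $\mu_n\downarrow\mu$, so the bound $(\elevel_\G)_+'(\mu)\ge-\tfrac12\Lp(\mu)$ is independent of the particular subsequential limit $u\in M(\mu)$ chosen. Neither issue is a gap.
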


\begin{remark} Concretely, in view of \eqref{suff} and Theorem \ref{ex6}, the interval $J$ of the preceding result may be 
$a)$ any interval of masses where $\ee_\G(\mu) < \ee_\R(\mu)$, if $p\in(2,6)$; $b)$ the interval $(\mu_\G,\mu_\R]$, if $\G$ is of type $A$; the interval $[\mu_\G,\mu_\R]$, if $\G$ is of type $B$.
\end{remark}

\begin{figure}[t]
\centering
\subfloat[][]{
\begin{tikzpicture}[xscale= 0.5,yscale=0.5]
\node at (0,0) [nodo] (00) {};
\node at (0,2) [nodo] (02) {};
\node at (-4,-5) [infinito] (inf1) {};
\node at (-1.5,-6) [infinito] (inf2) {};
\node at (1.5,-6) [infinito] (inf3) {};
\node at (4,-5) [infinito] (inf4) {};
\node at (0.4,1.1) [infinito] {$\ti$};
\node at (-0.5,0.2) [infinito] {$\vv$};
\draw[-] (00)--(02);
\draw[-] (00)--(inf1);
\draw[-] (00)--(inf2);
\draw[-] (00)--(inf3);
\draw[-] (00)--(inf4);
\draw[-,dashed] (inf1)--(-4.7,-5.875);
\draw[-,dashed] (inf2)--(-1.8,-7.2);
\draw[-,dashed] (inf3)--(1.8,-7.2);
\draw[-,dashed] (inf4)--(4.7,-5.875);
\end{tikzpicture}}\qquad
\subfloat[][]{
\begin{tikzpicture}[xscale= 0.5,yscale=0.5]
\node at (0,0) [nodo] (00) {};
\node at (-7,0) [infinito] (-55) {};
\draw[-] (00)--(-55);
\draw[-,dashed] (-55)--(-7.85,0);
\draw (2,0) circle (2); 
\node at (-0.3,0.4) [infinito] {$\vv$};
\node at (3.4,2) [infinito] {$\es$};
\end{tikzpicture}}
\caption{A graph with $N=4$ half--lines and a terminal edge $\texttt{t}$ of length $t$ (a); a tadpole graph (b).}
\label{fig-NlT}
\end{figure}
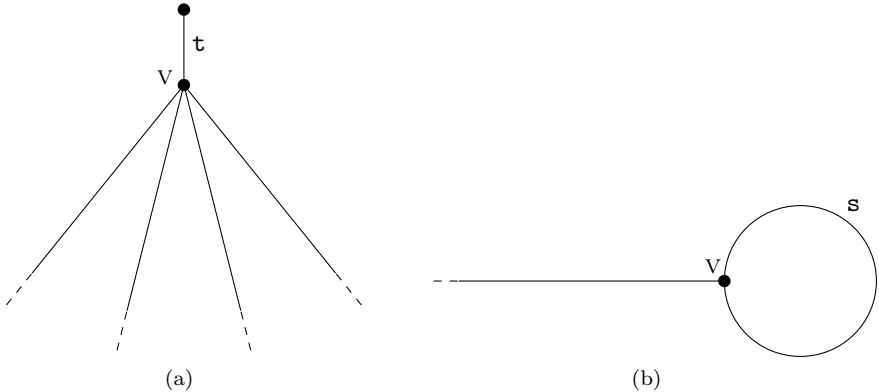

Some comments are in order. First, we point out that, as one can see from its proof, the preceding result is in fact quite general. Indeed, even though it is stated in the context of NLS ground states on metric graphs, it could be easily transferred to other settings, such as, for instance, that of NLS ground states on subsets of $\R^N$. In that case it could be considered as the starting point towards the study of uniqueness of ground states under a mass constraint, a topic for which general results have not yet been obtained. 

Secondly, Theorem \ref{thm1} raises at least two natural questions. To begin with, the issue of uniqueness of ground states when $\mu \in I$. In this case, all ground states of mass $\mu$ have the same energy and the same Lagrange multiplier. This seems to suggest that the ground state is in fact unique but, due to the lack of general techniques,  we believe that at this point the study should be carried out on a case by case basis, working each time on a specific graph. Furthermore, we suspect that there could still be  ``exceptional'' situations in which uniqueness might  fail. The next two results deal with this problem for two classes of graphs, and answer in the affermative the question of uniqueness for $\mu \in I$. A second natural question concerns  the set $Z$. In the second part of the paper (starting in Section \ref{nonuni}) we will produce an example of graph for which $Z$ is not empty, by constructing two ground states having the same mass but different Lagrange multipliers. This shows that, without further assumptions, the result of Theorem \ref{thm1} is somewhat sharp, in the sense that it is impossible in general to rule out the presence of ground states with the same mass and different Lagrange multipliers.

In the next two theorems, the existence results have been reported for the sake of completeness, but they are in fact well known (see \cite{ast,ast2,ast3} and the proofs in Section \ref{uni} for more details).

\begin{theorem}
\label{thm2}  
Let $p \in(2,6)$ and let $\G_{N,t}$ denote the graph made up of $N\geq2$ half-lines
and a terminal edge $\ti$ of length $t$, all emanating from the same vertex $\vv$
(Figure \ref{fig-NlT}.a). 
For every fixed $t>0$ there exists $\overline\mu=\overline\mu(t)$ such that ground states of mass $\mu$ on $\G_{N,t}$ exist if and only if $\mu\geq\overline\mu$. The function $\mu\mapsto\elevel_{G_{N,t}}(\mu)$ is strictly decreasing on $[\overline\mu,+\infty)$ and
\begin{equation}
\label{livelli_mu}
\elevel_{G_{N,t}}(\mu)\begin{cases}
=\elevel_\R(\mu) & \text{if }\mu\leq\overline\mu\\
<\elevel_\R(\mu) & \text{if }\mu>\overline\mu\,.
\end{cases}
\end{equation} 
Furthermore, 
\begin{itemize}
\item[(i)] if $N=2$, then $\overline\mu=0$;
\item[(ii)] if $N\geq3$, then $\overline\mu>0$.
\end{itemize}
In both cases, for all but at most countably many values of $\mu \in [\overline\mu,+\infty)$, the ground state of mass $\mu$  is unique.
\end{theorem}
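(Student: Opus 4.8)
The existence statements and the characterization \eqref{livelli_mu} of $\elevel_{\G_{N,t}}$, together with the dichotomy $\overline\mu=0$ for $N=2$ and $\overline\mu>0$ for $N\ge3$, are part of the existence theory and I would simply recall them from \cite{ast,ast2}. The genuinely new content is uniqueness for a.e.\ mass, and the plan is to deduce it from Theorem \ref{thm1}. Fix $\mu$ in the interval $I=J\setminus Z$ provided by Theorem \ref{thm1}, with $J$ any interval on which $\ee_{\G_{N,t}}(\mu)<\ee_\R(\mu)$; by \eqref{livelli_mu} this is $(\overline\mu,+\infty)$. On $I$ every ground state of mass $\mu$ shares the same Lagrange multiplier $\lambda=\overline\lambda(\mu)$, and $\lambda>0$ as required for $H^1$ decay on the half-lines. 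It then suffices to prove that, for $\lambda$ and $t$ fixed, the positive ground state is unique, since the exceptional set will be contained in $Z$ (together with at most the endpoint $\overline\mu$), hence at most countable.

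The second step is a structural classification. Since each ground state solves $u''+u^{p-1}=\lambda u$ on every edge with the common $\lambda$, on each edge it is an explicit piece of the solitonic profile, pinned down in the phase plane by the conserved Hamiltonian $H=\frac12(u')^2-\frac\lambda2 u^2+\frac1p u^p$. On a half-line, $H^1$ decay forces $H=0$, so the value at $\vv$ is some $a\in(0,u_{\max})$ with $u_{\max}=(p\lambda/2)^{1/(p-2)}$ and the outgoing derivative is $\pm s(a)$, $s(a)=\sqrt{\lambda a^2-\frac2p a^p}$; in particular no half-line can exceed $u_{\max}$. I would then show that a ground state must be \emph{symmetric and monotone}. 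Continuity at $\vv$ forces the common vertex value $a$; the preimage-counting inequality \eqref{Ncontr} (which would give $E(u)\ge\ee_\R(\mu)$ if $u$ had at least two preimages for a.e.\ value, contradicting $\ee_{\G_{N,t}}(\mu)<\ee_\R(\mu)$) forces the global maximum to be attained exactly once, and since half-lines cannot exceed $u_{\max}$ this maximum must lie on the terminal edge $\ti$. An exchange/energy comparison within the fixed-$\lambda$ class then excludes non-monotone (``bump'') profiles on the half-lines, so each half-line carries the same monotone decreasing tail from $a$ to $0$. Finally the Kirchhoff condition at $\vv$ fixes the outgoing derivative on $\ti$ to be $Ns(a)>0$, whence $H=\frac{N^2-1}2 s(a)^2>0$ there and $\ti$ is monotone increasing from $a$ up to a peak $b=b(a,\lambda)>u_{\max}$.

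After this reduction, a ground state is completely determined by the single shooting parameter $a=u(\vv)\in(0,u_{\max})$, and phase-plane integration gives the terminal length as
\[
t=T(a,\lambda)=\int_a^{b(a,\lambda)}\frac{du}{\sqrt{2H(a,\lambda)+\lambda u^2-\frac2p u^p}},\qquad H(a,\lambda)=\frac{N^2-1}2\,s(a)^2,
\]
with $b$ the first zero of the radicand above $u_{\max}$. Inspecting the endpoints, $T(a,\lambda)\to+\infty$ as $a\to0^+$ (a logarithmic divergence at the lower endpoint) and $T(a,\lambda)\to0^+$ as $a\to u_{\max}^-$ (the interval $[a,b]$ collapses). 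The core of the argument is to prove that $a\mapsto T(a,\lambda)$ is \emph{strictly monotone} (decreasing) for each fixed $\lambda$. Granting this, $T(a,\lambda)=t$ has a unique root $a=a(\lambda,t)$; hence two ground states of the same mass $\mu\in I$, sharing the same $\lambda$ and living on the same graph, must have the same vertex value and therefore coincide. Uniqueness for every $\mu\in I$ follows.

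The main obstacle is exactly this strict monotonicity of the shooting map: $T$ is an improper integral whose integrand, limits $a$ and $b(a,\lambda)$, and effective energy $H(a,\lambda)$ all vary with $a$, so differentiating produces boundary contributions at the turning point $b$ that must be controlled (typically via the rescaling $u=b\,w$ or a period-derivative/Pohozaev-type identity). A secondary, more routine, difficulty is making the structural step fully rigorous, namely ruling out asymmetric and ``bump'' configurations by rearrangement and exchange arguments; here \eqref{Ncontr} and the Polya--Szeg\H{o} inequality do most of the work. The dichotomy between $N=2$ and $N\ge3$ enters only through the value of $\overline\mu$ and the range of admissible masses, not through the uniqueness mechanism, which is identical in both cases.
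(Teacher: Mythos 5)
You have the right starting point (invoking Theorem \ref{thm1} to reduce to two ground states $u,v$ of the same mass sharing one Lagrange multiplier $\lambda$), but the core of your argument is a step you explicitly do not prove: the strict monotonicity of the shooting map $a\mapsto T(a,\lambda)$. This is a genuine gap, not a routine verification. Monotonicity of such time maps is a classically delicate matter (the integrand, both endpoints, and the effective energy $H(a,\lambda)$ all vary with $a$, and the integral is improper at $b$), and nothing in your outline controls the boundary contributions at the turning point; your endpoint asymptotics alone cannot exclude multiple roots of $T(a,\lambda)=t$. Without this step the proof does not close.

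The reason the paper does not need any such monotonicity is that you are discarding a crucial piece of information: the two ground states also have the \emph{same energy}. Together with equal mass and equal $\lambda$, Lemma \ref{same} gives $\int_\G |u'|^2=\int_\G|v'|^2$ and $\int_\G|u|^p=\int_\G|v|^p$. Integrating the conserved mechanical energy $\frac12|u'|^2+\frac1p|u|^p-\frac\lambda2|u|^2$ over the graph (it vanishes on each half-line and equals a constant $C_u$ on $\ti$) then forces $C_u=C_v$. Since by Lemma \ref{qualit1} and the Kirchhoff condition the maximum is attained at the tip with zero derivative and value $\ge\lambda^{1/(p-2)}$, the injectivity of $f(s)=\frac1p s^p-\frac\lambda2 s^2$ on $[\lambda^{1/(p-2)},\infty)$ yields $u(t)=v(t)$, and ODE uniqueness propagates the identity $u\equiv v$ over the terminal edge and then over each half-line. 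So the fix is not to prove the period-map monotonicity but to use the equal-energy constraint via Lemma \ref{same}; your structural reduction (monotone tails on the half-lines, increasing profile on $\ti$) is essentially Lemma \ref{qualit1} and is fine, but it should feed into this energy-constant comparison rather than into a shooting argument.
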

With the notation introduced in Theorem \ref{thm1}, this result states that for every fixed $t>0$ there results $J = [\overline\mu,+\infty)$ and the ground state of mass $\mu$ is unique  for every $\mu \in I$.

\begin{theorem}
\label{thm3}
Let $p \in(2,6]$ and let $\G$ be a tadpole graph (Figure \ref{fig-NlT}.b). Then ground states of mass $\mu$ exist for every $\mu>0$ if $p\in(2,6)$ and for every $\mu\in(\mu_{\R^+},\mu_\R]$ if $p=6$. For all but at most countably many values of $\mu$ in these intervals, the ground state of mass $\mu$  is unique.
\end{theorem}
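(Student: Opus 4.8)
The plan is to feed the abstract output of Theorem~\ref{thm1} into an explicit phase--plane analysis of the stationary equation \eqref{equation} on the tadpole: once the Lagrange multiplier is known to be one and the same constant on $M(\mu)$, uniqueness reduces to a statement about a single scalar ``time--map'' equation.

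\emph{Existence and reduction.} I would first record existence and set up Theorem~\ref{thm1}. For $p\in(2,6)$ the tadpole satisfies $\ee_\G(\mu)<\ee_\R(\mu)$ for every $\mu>0$ (known, see \cite{ast,ast2}), so by \eqref{suff} ground states exist for all masses and I may take $J=(0,+\infty)$. For $p=6$ the tadpole is of type $A$ (one half--line, no terminal edge), so Theorem~\ref{ex6} gives existence precisely for $\mu\in(\mu_{\R^+},\mu_\R]$, and on this interval $\ee_\G(\mu)<0=\ee_\R(\mu)$ by \eqref{livelli6}; hence $J=(\mu_{\R^+},\mu_\R]$. In both regimes the hypothesis of Theorem~\ref{thm1} holds on $J$, so there is an at most countable $Z\subset J$ such that, for every $\mu\in I:=J\setminus Z$, all ground states of mass $\mu$ share one Lagrange multiplier $\barl(\mu)=:\lambda^*$ and the same energy $\ee_\G(\mu)$. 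Since the half--line part of a ground state lies in $H^1$ and hence decays, and the linearization of \eqref{equation} at $0$ admits a decaying solution only when the multiplier is positive, necessarily $\lambda^*>0$. The task thus reduces to showing that, for fixed $\mu\in I$, there is a \emph{unique} positive $u\in\Hmu(\G)$ solving \eqref{equation} with this fixed $\lambda^*$ at energy $\ee_\G(\mu)$.

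\emph{Shape of a ground state.} Next I would use the rearrangement machinery to pin down the profile of any positive ground state $u$. Writing the tadpole as a half--line attached at $\vv$ to a loop of length $\ell$, minimality forces $u$ to agree with its symmetric rearrangement on the loop and with a monotone profile on the half--line: $u$ decreases along the half--line (a decaying soliton tail) and, up to the reflection symmetry of the loop, increases from $\vv$ to the point antipodal to $\vv$, where it attains its global maximum $M$; in particular $u$ is symmetric on the loop about that point. Indeed, any competitor lacking this structure could be symmetrized on the loop without increasing $\udot$ while preserving $\uLtwo$ and $\uLp$, contradicting minimality. Set $\eta:=u(\vv)>0$.

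\emph{Uniqueness via a time--map.} With $\lambda^*$ now fixed, I would classify such symmetric profiles. The conserved quantity $\tfrac12(u')^2-\tfrac{\lambda^*}2u^2+\tfrac1p u^p$ vanishes on the half--line (because $u$ decays), which determines the tail from $\eta$ alone and gives $u'(0^+)=-\sqrt{\lambda^*\eta^2-\tfrac2p\eta^p}$ there. The Kirchhoff condition at $\vv$ together with the loop symmetry forces each arc to carry half of the half--line flux, so the loop derivative at $\vv$, and hence the value $E_0$ of the conserved energy on the loop, become explicit functions of $\eta$. Integrating $dx=du\big/\sqrt{2E_0+\lambda^* u^2-\tfrac2p u^p}$ from $\eta$ to the turning value $M=M(\eta)$ and imposing that this arc--length equal $\ell/2$ yields one scalar equation $F(\eta)=\ell/2$. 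Proving that $F$ is strictly monotone in $\eta$ then gives a unique admissible $\eta$, hence a unique symmetric profile and, by the previous step, a unique positive ground state for every $\mu\in I$. The main obstacle is exactly this strict monotonicity of the time--map $F$: it is a period--function computation for the autonomous ODE in which the precise form of the nonlinearity enters, and it must be carried out uniformly in $p\in(2,6]$. A secondary delicate point is the rearrangement step, since levels below $\eta$ are attained only on the half--line, so the symmetric rearrangement inequality must be applied to the loop separately rather than globally onto $\R$; for $p=6$ the phase portrait and the argument are unchanged (still $\lambda^*>0$), provided compactness of minimizing sequences is secured through \eqref{modGN}, which is what makes the critical uniqueness on a whole interval of masses possible (cf.\ Remark~\ref{remsol}).
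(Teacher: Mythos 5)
Your setup (existence from \cite{ast2,ast3}, invoking Theorem \ref{thm1} to fix a single multiplier $\lambda^*$ on $M(\mu)$ for $\mu$ outside a countable set, and the rearrangement step pinning down the profile as in Lemma \ref{lem_tad}) matches the paper. The gap is in your final step: you reduce uniqueness to the strict monotonicity in $\eta$ of the time map $F(\eta)=\int_\eta^{M(\eta)} du\big/\sqrt{2E_0(\eta)+\lambda^* u^2-\tfrac2p u^p}$ and then explicitly leave that monotonicity unproven (you call it ``the main obstacle''). This is not a routine verification: monotonicity of such period/time maps, here with an $\eta$-dependent energy level $E_0(\eta)$ entering through the Kirchhoff condition, uniformly in $p\in(2,6]$, is precisely the kind of computation that can fail or require heavy ad hoc work, and nothing in your argument supplies it. As written the proof is incomplete at its decisive point.

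The paper avoids the time map entirely by exploiting one more piece of information that you set up but never use: two ground states $u,v$ of the same mass also have the same \emph{energy} $\ee_\G(\mu)$, and together with the common multiplier this forces (Lemma \ref{same}) $\int_\G|u'|^2\dx=\int_\G|v'|^2\dx$ and $\int_\G|u|^p\dx=\int_\G|v|^p\dx$. Integrating the mechanical energy identity over the graph (it equals $0$ on the half-line and a constant $C_u$, resp.\ $C_v$, on the loop) then shows $C_u=C_v$. Since the maximum of each ground state, attained at the midpoint of $\es$ where the derivative vanishes, is at least $\lambda^{1/(p-2)}$ (from $u''\le 0$ there and \eqref{equation}), and $f(s)=\tfrac1p s^p-\tfrac\lambda2 s^2$ is injective on $[\lambda^{1/(p-2)},+\infty)$, the equality $f(u(0))=C_u=C_v=f(v(0))$ gives $u(0)=v(0)$; Cauchy uniqueness for the ODE then propagates $u\equiv v$ first to the loop and, via the Kirchhoff condition at $\vv$, to the half-line. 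I recommend replacing your time-map step with this argument: it is elementary, works verbatim for $p=6$, and requires no monotonicity of any period function.
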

In this case, therefore, we have $J=\R^+$ when $p\in(2,6)$, while $J=(\mu_{\R^+},\mu_\R]$ when $p=6$ and, again,  for every $\mu \in I$, the ground state of mass $\mu$ is unique.

\begin{remark}
The recent paper \cite{NP} provides numerical simulations (see Figure 1 in that paper) suggesting uniqueness of ground states on the tadpole graph at the critical power $p=6$ and in the interval of masses $(\mu_{\R^+},\mu_\R]$ where existence is granted. Theorem \ref{thm3} above gives a rigorous proof of this uniqueness result.
\end{remark}

\begin{figure}[t]
\begin{center}
\begin{tikzpicture}[xscale= 0.5,yscale=0.5]
\node at (0,0) [nodo] (00) {};
\node at (7,2) [nodo] (02) {};
\node at (-2.5,2.5) [nodo] (B2) {};
\node at (-4.4,-5) [infinito] (inf1) {};
\node at (-1.5,-6) [infinito] (inf2) {};
\node at (1.5,-6) [infinito] (inf3) {};
\node at (4,-5) [infinito] (inf4) {};
\node at (3.8,1.6) [infinito] {$\tt r$};
\node at (-1.5,0.8) [infinito] {$\tt t$};
\node at (9.5,.9) [infinito] {$\tt s$};
\node at (0.1,.6) [infinito] {$\vv$};
\node at (6.8,1.4) [infinito] {$\ww$};
\draw[-] (00)--(02);
\draw[-] (00)--(inf1);
\draw[-] (00)--(inf2);
\draw[-] (00)--(inf3);
\draw[-] (00)--(inf4);
\draw[-,dashed] (inf1)--(-5.16,-5.875);
\draw[-,dashed] (inf2)--(-1.8,-7.2);
\draw[-,dashed] (inf3)--(1.8,-7.2);
\draw[-,dashed] (inf4)--(4.7,-5.875);
\draw[-] (00)--(B2);
%\draw (-3.5,3.5) circle (1.414);
\draw (8.4,2.5) circle (1.5);
\end{tikzpicture}
\end{center}
\caption{The graph of Theorem \ref{thm4}: $N$ half--lines, a terminal edge $\ti$, a bounded edge $\er$ and a self-loop $\es$ attached to $\er$.}
\label{grafone}
\end{figure}
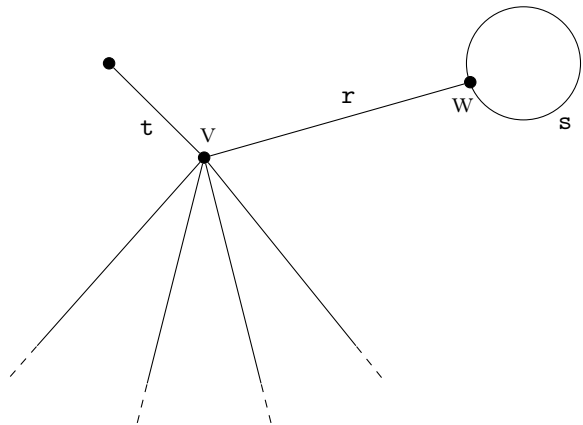

Our last result complements the two preceding Theorems. We construct a graph for which the set $Z$ is not empty, showing that for some value of $\mu$ there exist two ground states of mass $\mu$ having different Lagrange multipliers.

Precisely, we consider the graph $\G$ of Figure \ref{grafone}. It is made up of $N\ge 2$ half-lines, a terminal edge $\ti$ and a bounded edge $\er$ all emanating from a single vertex $\vv$. A self-loop $\es$ is attached at the tip $\ww$ of the bounded edge $\er$. We denote by $r,s,t$ the lengths of the edges $\er, \es, \ti$ respectively.

\begin{theorem}
\label{thm4}
Let $p\in(2,6)$. For every $\mu>0$ there exist positive numbers $r,s,t$ such that  the graph $\G$   of Figure \ref{grafone} with edges $\er, \es, \ti$ of lengths $r,s,t$ admits two ground states $u, v \in H_\mu^1(\G)$ that solve \eqref{equation} with different values of $\lambda$.
\end{theorem}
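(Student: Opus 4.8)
\textbf{Proof strategy for Theorem \ref{thm4}.}

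The plan is to exploit the modular structure of the graph $\Grst$, which is built from two competing pieces: the ``$N$ half-lines plus terminal edge $\ti$'' cluster around $\vv$ (as in Theorem \ref{thm2}) and the ``bounded edge $\er$ plus self-loop $\es$'' cluster hanging off $\ww$. My idea is to produce, for a suitable choice of lengths, two distinct ground states, one essentially supported on each cluster, carrying the \emph{same} mass $\mu$ but realizing the common minimal energy through geometrically different configurations. Since a ground state solves \eqref{equation} with $\LL(u)=\lambda$, and since the two configurations will be genuinely different solitons sitting on pieces of different effective ``type,'' their Lagrange multipliers will differ; by the contrapositive of Theorem \ref{thm1}(i), the mass $\mu$ must then lie in the exceptional set $Z$.

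The key steps, in order, are as follows. First I would isolate two one-parameter families of candidate competitors. For the half-line cluster, the relevant competitor is the half-soliton-type profile concentrated near $\vv$, whose energy I can estimate using \eqref{energiamezzosol} and the rearrangement/Gagliardo--Nirenberg machinery of Section \ref{state}; its energy depends on $t$ (the terminal edge length) but not on $r,s$. For the loop cluster, the competitor is a soliton-type bump concentrated on the self-loop $\es$ reached through $\er$; because a self-loop of length $s$ attached by an edge of length $r$ behaves, for the purposes of concentration, like a segment on which one can place (after symmetric rearrangement $\widehat u$) a piece of a soliton, its optimal energy depends on $r,s$. The plan is then to compute or sharply estimate both energies as functions of the lengths and to \emph{tune} $r,s,t$ so that the two minimal energies coincide \emph{and} both equal $\ee_\G(\mu)$, i.e. so that neither cluster alone can do strictly better than the other. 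The continuity and monotonicity of $\ee_\G$ in $\mu$ (Remark \ref{condlevel}(ii)), together with the scaling rule for $\phi_\mu$ in Remark \ref{remsol}, should make this tuning a matter of an intermediate-value argument: by varying, say, $t$ against $(r,s)$ one crosses from a regime where the half-line cluster wins to one where the loop cluster wins, and at the crossing the two energies agree.

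The next step is to verify that at the crossing value of the lengths the two competitors are \emph{both} actual ground states (not merely competitors of equal energy), which amounts to showing that the common value they achieve is exactly $\ee_\G(\mu)$ and that each is attained by a genuine minimizer; here I would invoke the existence theory and compactness results from Section \ref{compactsec} together with the strict inequality $\ee_\G(\mu)<\ee_\R(\mu)$ needed to rule out mass escaping to infinity along the half-lines. Finally I must check that the two minimizers carry \emph{different} $\lambda$. This is where I expect the essential content to lie: one computes $\lambda=\LL(u)$ for each profile via \eqref{lambda u}, and the point is that a profile living on a terminal-edge/half-line configuration and a profile living on an $\er$--$\es$ loop configuration, even at equal mass and equal energy, solve \eqref{equation} with different multipliers because the relation between $\lambda$, the mass, and the energy (encoded through $\ee_\G'(\mu)=-\tfrac12\overline\lambda(\mu)$ on $I$, and through the soliton scaling on each piece) is different for the two geometries. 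Equivalently, the two profiles sit at points where the left and right derivatives of $\ee_\G$ differ, which is precisely the hallmark of a mass in $Z$.

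The main obstacle, I expect, is the simultaneous tuning of three length parameters to force \emph{exact} equality of the two cluster energies while keeping both at the global minimum level: one must show that the loop cluster's optimal energy can be made to range over an interval overlapping the half-line cluster's energy as the lengths vary, and that the matching can be achieved for an \emph{arbitrary} prescribed $\mu$ (the statement quantifies over all $\mu>0$). Controlling the loop-cluster energy sharply—in particular showing it is attained by a soliton piece and computing its dependence on $r$ and $s$ finely enough to run the intermediate-value argument, while simultaneously ensuring the constructed profiles are honest Kirchhoff solutions with the claimed distinct multipliers—is the delicate part, and is presumably why the authors describe this as the most involved proof in the paper.
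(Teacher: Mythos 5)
Your skeleton matches the paper's: you split competitors according to where the maximum sits (terminal edge $\ti$ versus self-loop $\es$), tune the lengths so that the two doubly constrained infima coincide, show both are attained by genuine ground states, and then argue the multipliers differ. Two points, however, are genuine gaps rather than routine details.

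First, the tuning. You describe a one-parameter intermediate-value argument and assert that the half-line cluster's energy depends only on $t$ while the loop cluster's depends only on $(r,s)$. Neither constrained infimum decouples this way: the constraint only prescribes where the sup is attained, and mass distributes over the whole graph, so both $\Funo(r,s,t)$ and $\Fdue(r,s,t)$ depend on all the lengths. The paper therefore solves a genuinely two-dimensional problem, using the Brouwer--Miranda theorem in the variables $(s,t)$ (for each large fixed $r$ and small $\eps$) to force $\Funo=\Fdue=\ee_\R(\mu)-\eps$ \emph{exactly}. Pinning the common value just below $\ee_\R(\mu)$ is not cosmetic: it is what makes Theorem \ref{compact} applicable (attainment requires $\II\le\ee_\R(\mu)$), guarantees $\ee_\G(\mu)<\ee_\R(\mu)$ so the constrained minimizers are in fact global ground states (via the appendix lemma locating the maximum in $\es\cup\ti$), and sets up the limit $\eps\to0$ used below.

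Second, and more seriously, your argument that the two minimizers carry different $\lambda$ is circular. Saying that the two geometries give ``different relations between $\lambda$, mass and energy,'' or that the mass must lie in $Z$ because the one-sided derivatives of $\ee_\G$ differ, assumes exactly the conclusion: two ground states of equal mass and equal energy could perfectly well share the value of $\LL$, and for generic finite $r,s,t$ there is no visible obstruction to that. The paper's resolution is asymptotic: taking $r_n\to\infty$, $\eps_n\to0$, it shows $\overline s_n\to\infty$, $\overline t_n\to t$ with $\ee_{\G_{N+1,t}}(\mu)=\ee_\R(\mu)$, that $\LL(u_n)\to\LL(w)$ for $w$ a ground state on $\G_{N+1,t}$, and that $\LL(v_n)\to\LL(\phi_\mu)$, the soliton multiplier. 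It then proves $\LL(w)\ne\LL(\phi_\mu)$ by contradiction: equality would force, via Lemma \ref{same}, equal Dirichlet and $L^p$ integrals, hence a vanishing mechanical-energy constant on the bounded edge, hence $w\equiv\phi_\mu$ there, contradicting that $w$ attains its maximum at the tip of the terminal edge. Without this (or some substitute) quantitative separation of the two limiting multipliers, your proof does not close.
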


\section{Compactness  properties}
\label{compactsec}

The analysis of the compactness properties of minimizing sequences, or of sequences of ground states, plays a central role in many of our results. Many compactness Theorems are available but they are somehow scattered in the literature, and proved under different assumptions. In this Section we synthetise these results, to make the exposition as self-contained as possible.
We start by recalling the boundedness properties of sequences $(u_n)\subset H_\mu^1(\G)$ with bounded energy. We report the proof for completeness.

\begin{lemma}
\label{bdedseq} 
Let  either $p\in(2,6)$ and $\mu >0$ or $p=6$ and $\mu\in (0,\mu_\R]$.  Let $(u_n)\subset \Hmu(\G)$ be a sequence such that for every $n$
\[
E(u_n,\G) \le C 
\]
and assume further that $C<0$ if $p=6$ and $\mu = \mu_\R$.
Then $u_n$ is uniformly bounded in $H^1(\G)$.
\end{lemma}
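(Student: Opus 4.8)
The plan is to control the only quantity that is not already frozen by the constraint, namely the Dirichlet term $\|u_n'\|_2$, since $\|u_n\|_2^2=\mu$ is fixed. Rewriting the energy bound $E(u_n,\G)\le C$ as
\[
\tfrac12\|u_n'\|_2^2 \le C + \tfrac1p\|u_n\|_p^p,
\]
the whole matter reduces to estimating $\|u_n\|_p^p$ from above by a power of $\|u_n'\|_2^2$ strictly smaller than $1$, so that the quadratic term on the left dominates and forces a bound.

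In the subcritical range $p\in(2,6)$ I would simply feed the Gagliardo--Nirenberg inequality \eqref{GN} together with $\|u_n\|_2^2=\mu$ into the previous line, obtaining
\[
\tfrac12\|u_n'\|_2^2 \le C + \tfrac{K}{p}\,\mu^{\frac p4+\frac12}\,\|u_n'\|_2^{\frac p2-1}.
\]
Since $p<6$ forces the exponent $\frac p2-1<2$, the scalar map $t\mapsto \tfrac12 t^2-\tfrac Kp\mu^{\frac p4+\frac12}t^{\frac p2-1}$ is coercive, so the set of $t\ge0$ for which it is $\le C$ is bounded. This pins down $\|u_n'\|_2$ and hence the full $H^1$ norm, uniformly in $n$.

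The case $p=6$ is more delicate, because $\|u_n\|_6^6$ scales exactly like $\|u_n'\|_2^2$ and \eqref{GN} no longer has room to spare. Here I would invoke the sharper inequality \eqref{modGN} (valid on graphs with no terminal edge, which is the relevant setting in the critical regime): for each $n$ there is $\theta_n\in[0,\mu]$ such that, setting $t_n=\|u_n'\|_2^2$,
\[
E(u_n,\G) \ge \tfrac12\Bigl[1-\bigl(\tfrac{\mu-\theta_n}{\mu_\R}\bigr)^2\Bigr]t_n - \tfrac{C_\G}{6}\theta_n^{1/2},
\]
where $C_\G=C(\G)>0$ is the constant of \eqref{modGN}. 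When $\mu<\mu_\R$ the bracket is bounded below by the \emph{positive} constant $1-(\mu/\mu_\R)^2$ uniformly in $\theta_n\in[0,\mu]$, and since $\theta_n^{1/2}\le\mu^{1/2}$ the inequality $E(u_n,\G)\le C$ gives at once $\tfrac12[1-(\mu/\mu_\R)^2]\,t_n\le C+\tfrac{C_\G}{6}\mu^{1/2}$, bounding $t_n$.

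The genuine obstacle is the endpoint $\mu=\mu_\R$, where the bracket degenerates to zero and coercivity is lost: this is exactly where the extra hypothesis $C<0$ must be used. Writing $s_n=\theta_n/\mu_\R\in[0,1]$ and using $1-(1-s_n)^2=s_n(2-s_n)\ge s_n$, the bound becomes
\[
\tfrac12 s_n(2-s_n)\,t_n \le C + \tfrac{C_\G\mu_\R^{1/2}}{6}\,s_n^{1/2}.
\]
The left-hand side is nonnegative, so the right-hand side must be nonnegative as well, and because $C<0$ this forces $s_n^{1/2}\ge 6|C|/(C_\G\mu_\R^{1/2})$, i.e. $s_n\ge s_0$ for a constant $s_0>0$ independent of $n$. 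Feeding $s_n\ge s_0$ and $s_n\le1$ back in (so that $s_n(2-s_n)\ge s_0$ on the left and $s_n^{1/2}\le1$ on the right) yields $\tfrac{s_0}{2}\,t_n\le C+\tfrac{C_\G\mu_\R^{1/2}}{6}$, whence $t_n$ is uniformly bounded. I expect this last extraction --- converting the sign condition $C<0$ into a uniform lower bound on the mass fraction $\theta_n$, and thereby recovering coercivity precisely at the critical threshold --- to be the crux of the whole argument.
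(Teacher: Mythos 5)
Your proof is correct and follows essentially the same route as the paper's: Gagliardo--Nirenberg plus coercivity of $t\mapsto \tfrac12 t^2 - c\,t^{(p-2)/2}$ in the subcritical range, the refined inequality \eqref{modGN} for $p=6$ with $\mu<\mu_\R$, and, at $\mu=\mu_\R$, the observation that $C<0$ forces $\theta_n$ (your $s_n$) to stay bounded away from zero, which restores a uniformly positive coefficient in front of $\|u_n'\|_2^2$. Your write-up is in fact slightly more careful than the paper's, in that it distinguishes the constant $C_\G$ of \eqref{modGN} from the energy bound $C$ (the paper overloads the letter $C$) and spells out the two-step extraction at the endpoint explicitly.
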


\begin{proof} Since $\|u_n\|_2^2 = \mu$ for every $n$, it is sufficient to show the uniform boundedness of  $\|u_n'\|_2$.
If $p\in (2,6)$, plugging \eqref{GN} in \eqref{energy} we obtain, for every $n$,
\[
C \ge E(u_n,\G) \ge \frac12 \|u_n'\|_2^2 -\frac{K}p \mu^{\frac{p+2}4}\|u_n'\|_2^{\frac{p-2}2}.
\]
Since $(p-2)/2<2$, this shows that $u_n'$ is uniformly bounded in $L^2(\G)$.

If $p=6$, using \eqref{modGN} instead of \eqref{GN}, and writing $\theta_n$ for $\theta(u_n)$, there results
\[
C \geq E(u_n,\G)\geq \frac12\|u_n'\|_2^2\left(1-\left(\frac{\mu -\theta_n}{\mu_\R}\right)^2\right)-\frac C6\theta_n^{1/2}.
\]
As $\theta_n \le \mu$ for all $n$, we see that if $\mu <\mu_\R$, the coefficient of $\|u_n'\|_2^2$ is bounded away from zero and therefore $\|u_n'\|_2$ is uniformly bounded. If $\mu = \mu_\R$ (in which case we assume that $C<0$), expanding the coefficient of $\|u_n'\|_2^2$, we have
\[
0> C \geq E(u_n,\G) \ge \frac12 \frac{\theta_n}{\mu_\R}\|u_n'\|_2^2\left(2-\f{\theta_n}{\mu_\R}\right)-\f C6\theta_n^{1/2}
\]
which shows that $\theta_n$ is bounded away from zero, implying that $\|u_n'\|_2$ is again uniformly bounded.

\end{proof}

The most general result describing the behavior of bounded sequences in $H^1_\mu(\G)$ is probably Theorem 2.4 in \cite{astbound}. It is formulated for the subcritical case $p<6$, but it is immediate to realize that it holds also for $p=6$. Since we use it several times in this paper we report it here stated for every $p\in (2,6]$.

\begin{theorem}[\cite{ast2}, Theorem 2.4]
\label{propgen}
Let either $p\in(2,6)$ and $\mu >0$ or $p=6$ and $\mu\in (0,\mu_\R]$ and let $(u_n)\subset \Hmu(\G)$ be a sequence such that $u_n\rightharpoonup u$ in $H^1(\G)$ and a.e. on $\G$. Set 
\begin{align}
\label{defm}
& m:=\mu-\Vert u\Vert_2^2\in [0,\mu], \\
& \II:=\liminf_n E(u_n,\G) \nonumber.
\end{align}
Then one of the following  alternatives occurs, depending on the value of $m$:
\begin{itemize}
 \item[(i)] $m=0$. Then $u_n\to u$ strongly in $H^1(\G)$  and $\;E(u,\G)\leq \II$.
\item[(ii)] $0<m<\mu$. Then
\begin{equation}
\label{stimaL}
\II>\min \left\{  \frac{\mu}m \ee_\R(m),\, E\left(\sqrt{\frac \mu{\mu-m}}u,\,\G\right)\right\}.
\end{equation}
\item[(iii)]  $m=\mu$. Then $u\equiv 0$ and $\II\geq  \ee_\R(\mu)$.
\end{itemize}
\end{theorem}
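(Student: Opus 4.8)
The plan is to reduce the whole statement to a single mechanism --- the energy carried by the mass that escapes to infinity --- by splitting $u_n$ into the weak limit $u$ plus a remainder that converges weakly to zero. Set $w_n := u_n - u$, so that $w_n \rightharpoonup 0$ in $H^1(\G)$ and a.e. By the Brezis--Lieb lemma for the $L^2$ and $L^p$ norms, together with the Hilbert-space expansion $\|u_n'\|_2^2 = \|u'\|_2^2 + \|w_n'\|_2^2 + o(1)$ (the cross term vanishes since $w_n' \rightharpoonup 0$ in $L^2$), one obtains
\[
\|w_n\|_2^2 = m + o(1), \qquad E(u_n,\G) = E(u,\G) + E(w_n,\G) + o(1),
\]
so that $\II = E(u,\G) + \liminf_n E(w_n,\G)$. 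Moreover, restricting to a compact core $\K$ of $\G$ (all vertices and all bounded edges), the compact embedding $H^1(\K)\hookrightarrow C(\K)$ turns $u_n\rightharpoonup u$ into uniform convergence on $\K$, so $\eta_n := \max_{\K}|w_n| \to 0$. Everything then rests on a lower bound for the energy of the weakly vanishing remainder in terms of its escaping mass.

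The key lemma I would isolate is: if $v_n \ge 0$, $v_n \to 0$ uniformly on every compact subgraph, and $\|v_n\|_2^2 \to \sigma$, then $\liminf_n E(v_n,\G) \ge \ee_\R(\sigma)$; it is applied to $v_n = |w_n|$, which has the same mass and energy as $w_n$ and satisfies the uniform decay on $\K$ just noted. To prove it, observe that since $|v_n|\le\eta_n$ at the bases of the half-lines and $v_n\to 0$ at infinity, every superlevel set $\{v_n > \tau\}$ with $\tau > \eta_n$ lies in the interiors of the half-lines and consists of excursions meeting each value at least twice; hence the truncation $(v_n - \eta_n)_+$ has at least $2$ preimages for a.e.\ value. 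The two-preimage inequality \eqref{Ncontr} (with $N=2$) then gives $E\big((v_n-\eta_n)_+,\G\big) \ge \ee_\R\big(\|(v_n-\eta_n)_+\|_2^2\big)$, and letting $n\to\infty$, so that $\eta_n\to 0$ and the truncation costs only $o(1)$ in mass and energy, yields the claim. This lemma is exactly case (iii) when $u\equiv 0$ (then $\sigma=\mu$, giving $\II \ge \ee_\R(\mu)$), and for $p=6$ it reads $\II \ge 0 = \ee_\R(\mu)$.

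For case (i), $m=0$, we have $\|w_n\|_2\to 0$, hence by Gagliardo--Nirenberg \eqref{GN} also $\|w_n\|_p\to 0$, so $\liminf E(w_n,\G) = \tfrac12\liminf\|w_n'\|_2^2 \ge 0$ and therefore $E(u,\G)\le\II$; strong $L^2$ and $L^p$ convergence are immediate, and strong $H^1$ convergence follows by matching the energy, i.e. from the reverse bound $\II\le E(u,\G)$ available along almost-minimizing sequences. For case (ii), $0<m<\mu$, the lemma applied to $|w_n|$ gives $\liminf E(w_n,\G)\ge\ee_\R(m)$, so $\II \ge E(u,\G)+\ee_\R(m)$ with $u\not\equiv 0$ (since $\|u\|_2^2=\mu-m>0$). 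It then remains to show this quantity strictly exceeds $\min\{\tfrac\mu m\ee_\R(m),\,E(\sqrt{\mu/(\mu-m)}\,u,\G)\}$, comparing the two strategies ``pour the escaped mass back into the rescaled retained profile'' versus ``keep it as a soliton of mass $m$''; here I would exploit the explicit form $\ee_\R(\sigma)=-\theta_p\sigma^{2\beta+1}$, whose strict concavity and vanishing at $0$, combined with the super-homogeneity of $t\mapsto E(\sqrt t\,u,\G)$ (its $L^p$ term scaling like $t^{p/2}$ with $p/2>1$), produce the strict gap.

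The main obstacle is the escaping-mass lemma of the second paragraph: it is the only step where the topology of $\G$ genuinely enters, and the delicate point is to verify that a weakly vanishing sequence can retain mass only on the half-lines --- so that the two-preimage count is legitimate --- and that truncating at the core level $\eta_n$ wastes only $o(1)$ of mass and energy, which requires controlling the long, low tails on the half-lines so that they do not leak mass below level $\eta_n$. The second genuinely non-trivial point is the \emph{strict} inequality in case (ii): since the bound $\liminf E(w_n,\G)\ge\ee_\R(m)$ need not be strict, strictness must be extracted entirely from the strict concavity of $\ee_\R$ and the scaling of the retained energy, which is the computational heart of that case.
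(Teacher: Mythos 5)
The paper itself contains no proof of this statement: it is imported verbatim from \cite{ast2}/\cite{astbound} (see the sentence preceding the theorem), so the only meaningful comparison is with the proof in those sources, and your architecture --- Brezis--Lieb splitting $u_n=u+w_n$, uniform vanishing of $w_n$ on the compact core, a two-preimage count on the half-lines fed into \eqref{Ncontr} to get $\liminf_n E(w_n,\G)\ge \ee_\R(m)$, and an algebraic comparison for the strict inequality in (ii) --- is essentially that proof. On the two points you defer: the ``computational heart'' of (ii) does close, and without any appeal to concavity. Writing $t=\mu/(\mu-m)>1$, $A=\frac12\|u'\|_2^2$, $B=\frac1p\|u\|_p^p$, if both terms of the min were $\ge E(u,\G)+\ee_\R(m)$ one would get $B\ge A-\frac{\mu-m}{m}\ee_\R(m)$ and $(t^{p/2}-1)B\le (t-1)A-\ee_\R(m)$; combining these and using $(t-1)\frac{\mu-m}{m}=1$ together with $t^{p/2}>t$ gives
\[
0\le (t^{p/2}-t)A\le \ee_\R(m)\Bigl((t^{p/2}-1)\tfrac{\mu-m}{m}-1\Bigr),
\]
whose right-hand side is negative for $p<6$, and is zero for $p=6$, forcing $A=B=0$, i.e.\ $u\equiv0$, against $\|u\|_2^2=\mu-m>0$. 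Likewise, the mass leaking below the truncation level $\eta_n$ is harmless in the direction you need: since $\ee_\R$ is nonincreasing, $E((v_n-\eta_n)_+,\G)\ge \ee_\R(\|(v_n-\eta_n)_+\|_2^2)\ge \ee_\R(\sigma+o(1))$ no matter how much mass is lost, while the energy cost of truncation is controlled by the pointwise bound $v_n^p-(v_n-\eta_n)_+^p\le C(\eta_n+\eta_n^{p-2})v_n^2$, whose integral is $o(1)$.

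One caveat you correctly sense but should state more bluntly: the unconditional strong $H^1$ convergence asserted in case (i) cannot be extracted from the stated hypotheses alone --- adding to $u$ a weakly vanishing oscillation with $L^2$ norm $o(1)$ but derivative of fixed $L^2$ size, and renormalizing the mass, produces a sequence satisfying all the hypotheses with $m=0$ and $E(u,\G)\le\II$, yet $\|u_n'\|_2\not\to\|u'\|_2$. Your restriction to (almost-)minimizing sequences, where the reverse inequality $\II\le E(u,\G)$ forces $\|u_n'\|_2\to\|u'\|_2$, is the correct reading, and it is the only setting in which the paper ever invokes case (i) (Proposition \ref{newcomp} and Theorem \ref{compact}).
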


The next result describes the behavior of sequences of ground states with converging masses.

\begin{proposition}
\label{newcomp} Let either  
\begin{itemize}
\item[i)]  $p\in(2,6]$, $\mu >0$ and $\elevel_\G(\mu)<\elevel_\R(\mu)$, or
\item[ii)] $p=6$, $\G$  a graph of type $B$ and $\mu = \mu_\G$.
\end{itemize}
For every $n\in \N$, let  $u_n \in M(\mu_n)$, with $\mu_n \to \mu$ as $n\to \infty$.
Then, up to subsequences, $u_n \to u$  in $H^1(\G)$ for some $u \in M(\mu)$. In particular, $M(\mu)$ is compact.
\end{proposition}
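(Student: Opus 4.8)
The plan is to extract a strongly convergent subsequence by feeding the boundedness Lemma~\ref{bdedseq} into the trichotomy of Theorem~\ref{propgen} and showing that only the loss-free alternative survives. First I would check that $(u_n)$ is bounded in $H^1(\G)$. Since $u_n\in M(\mu_n)$ and $\mu\mapsto\ee_\G(\mu)$ is continuous (Remark~\ref{condlevel}), the energies $E(u_n,\G)=\ee_\G(\mu_n)$ converge to $\ee_\G(\mu)$ and hence are bounded above by some $C$. In the delicate subcase $p=6$, $\mu=\mu_\R$ of case (i), the strict inequality $\ee_\G(\mu)<\ee_\R(\mu)=0$ together with continuity forces $\ee_\G(\mu_n)<0$ eventually, so one may take $C<0$; in case (ii) one has $\mu=\mu_\G<\mu_\R$, so the borderline coefficient in Lemma~\ref{bdedseq} never appears. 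In both cases the mere existence of $u_n$ forces $\mu_n\le\mu_\R$ when $p=6$ (else $\ee_\G=-\infty$ by \eqref{livelli6}), so Lemma~\ref{bdedseq} applies and $(u_n)$ is bounded.

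Passing to a subsequence, $u_n\rightharpoonup u$ in $H^1(\G)$ and a.e.\ on $\G$. Because $\mu_n\to\mu$ I would renormalise to fixed mass, setting $\tilde u_n:=\sqrt{\mu/\mu_n}\,u_n\in\Hmu(\G)$; then $\tilde u_n\rightharpoonup u$, and since the scaling factor tends to $1$ while $(u_n)$ is bounded, $\II:=\liminf_n E(\tilde u_n,\G)=\lim_n E(u_n,\G)=\ee_\G(\mu)$. Theorem~\ref{propgen} now applies to $(\tilde u_n)$ with $m=\mu-\|u\|_2^2$, and the whole task reduces to excluding the lossy alternatives (ii) and (iii). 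For this I record that $B:=E(\sqrt{\mu/(\mu-m)}\,u,\G)\ge\ee_\G(\mu)=\II$, because $\sqrt{\mu/(\mu-m)}\,u\in\Hmu(\G)$; hence the minimum in \eqref{stimaL} cannot be attained at $B$ (otherwise $\II>B\ge\II$), so alternative (ii) would force $\II>A$ with $A:=\frac{\mu}{m}\ee_\R(m)$.

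In case (i) this closes quickly. For $p\in(2,6)$, homogeneity gives $A=-\theta_p\mu\,m^{2\beta}>-\theta_p\mu^{2\beta+1}=\ee_\R(\mu)$ for $0<m<\mu$ (as $\beta>0$), so alternative (ii) yields $\ee_\G(\mu)=\II>A>\ee_\R(\mu)$, contradicting the hypothesis $\ee_\G(\mu)<\ee_\R(\mu)$; the same strict inequality contradicts $\II\ge\ee_\R(\mu)$ in alternative (iii). For $p=6$ one has $A=0=\ee_\R(\mu)$, and $\II>0$ (resp.\ $\II\ge 0$) contradicts $\II=\ee_\G(\mu)<0$. Only alternative (i) remains: $\tilde u_n\to u$ strongly in $H^1(\G)$, hence $u_n\to u$, $\|u\|_2^2=\mu$, and $E(u,\G)=\lim_n\ee_\G(\mu_n)=\ee_\G(\mu)$, i.e.\ $u\in M(\mu)$. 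Taking $\mu_n\equiv\mu$ then yields sequential, hence topological, compactness of $M(\mu)$.

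The hard part will be case (ii) ($p=6$, $\G$ of type $B$, $\mu=\mu_\G$), where the strict inequality degenerates into $\ee_\G(\mu_\G)=\ee_\R(\mu_\G)=0$. Alternative (ii) is still harmless: existence of ground states at $\mu_n$ forces $\mu_n\in[\mu_\G,\mu_\R]$ (Theorem~\ref{ex6}), so $m<\mu_\G<\mu_\R$ gives $A=0$ and $B\ge 0$, whence $\II>\min\{A,B\}=0$ contradicts $\II=0$. The genuine obstacle is alternative (iii), the total escape of mass to infinity, which is perfectly compatible with $\II=\ee_\R(\mu_\G)=0$ at the level of Theorem~\ref{propgen}. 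To rule it out I would invoke the refined inequality \eqref{modGN}: applied to $u_n$ with concentration parameter $\theta_n$ it gives $E(u_n,\G)\ge\frac12\|u_n'\|_2^2\bigl(1-((\mu_n-\theta_n)/\mu_\R)^2\bigr)-\frac{C}{6}\theta_n^{1/2}$, and total escape ($u_n\rightharpoonup 0$) should drive $\theta_n\to 0$, keeping the coefficient of $\|u_n'\|_2^2$ bounded away from $0$ (since $\mu_n\to\mu_\G<\mu_\R$) and forcing $\|u_n'\|_2\to 0$, hence $u_n\to 0$ in $H^1(\G)$ and the absurd conclusion $\mu_n\to 0\neq\mu_\G$. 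Making this precise requires control of $\theta(u)$ under weak convergence and uses crucially the type-$B$ topology (at least two half-lines, not covered by cycles) that makes $\mu_\G$ the exact threshold; this borderline analysis, drawn from \cite{ast3}, is where the real work concentrates.
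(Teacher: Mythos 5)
Your treatment of case (i) is correct and coincides with the paper's: renormalise to fixed mass, observe that the result is a bounded minimizing sequence, and use Theorem \ref{propgen} to exclude the alternatives $m\in(0,\mu)$ and $m=\mu$ exactly as you do; the boundedness discussion (including the borderline $p=6$, $\mu=\mu_\R$ subcase) is also fine.

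Case (ii) has a genuine gap in the exclusion of vanishing. Your plan is: weak vanishing forces $\theta_n\to0$, then \eqref{modGN} forces $\|u_n'\|_2\to0$, hence $u_n\to0$ in $H^1(\G)$, contradicting $\mu_n\to\mu_\G$. The last implication is false: the $L^2$ norm of $u_n$ is pinned at $\mu_n$, so $\|u_n'\|_2\to0$ can never give $u_n\to 0$ in $H^1(\G)$. Worse, sequences in $H^1_{\mu_\G}(\G)$ with $\|u_n'\|_2\to0$ genuinely exist (spread the mass along the half-lines); they are precisely the non-compact minimizing sequences at level $\ee_\G(\mu_\G)=0$ that the proposition must distinguish ground states from, and they satisfy every inequality in your chain ($E(u_n,\G)\to 0=\ee_\G(\mu_\G)$), so no contradiction is reachable along this route even if the unproven claim that weak vanishing drives $\theta(u_n)\to0$ were granted. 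The paper argues differently: from $E(u_n,\G)\le 0$ it deduces that $\|v_n\|_6^6/\|v_n'\|_2^2\to 3$ --- otherwise $\|u_n\|_6^6\le(3-\delta)\|u_n'\|_2^2$ along a subsequence would give $E(u_n,\G)\ge\frac{\delta}{6}\|u_n'\|_2^2>0$ --- i.e.\ that $(v_n)$ is an optimizing sequence for the Gagliardo--Nirenberg inequality, and then invokes the compactness theorem for such optimizing sequences on type-$B$ graphs (Theorem 3.4 of \cite{ast3}). That external compactness result for GN-optimizing sequences is the essential ingredient your proposal is missing.
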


Note that by the discussion following \eqref{suff} and Theorem \ref{ex6}, assumptions $i)$ and $ii)$ cover all the cases in which ground states are known to exist. Particularly, when $p=6$, assumption $i)$ implies $\mu \in (\mu_\G, \mu_\R]$.

\begin{proof} We split the proof according to whether $i)$ or $ii)$ holds.
\medskip

\noindent{\em i)} Define $v_n\in H^1_{\mu}(\G)$ by
\begin{equation}
\label{defvnun}
v_n:=\sqrt{\frac{\mu}{\mu_n}}\,\,u_n.
\end{equation}
Since the sequence $(u_n)$ is bounded in $H^1(\G)$ by Lemma \ref{bdedseq},
and since $\mu_n \to \mu$, as $n\to\infty$  we have
\begin{equation}
\label{vn_min}
\begin{aligned}
E(v_n,\G) &=\frac12\frac{\mu}{\mu_n}\int_\G|u_n'|^2\dx-
\frac1p\Big(\frac{\mu}{\mu_n}\Big)^{\frac{p}2}\int_\G|u_n|^p\dx
= E(u_n,\G) + o(1)\\ &=\elevel_\G(\mu_n)+o(1) = \elevel_\G( \mu)+o(1), 
\end{aligned}
\end{equation}
having used the continuity of $\elevel_\G$ (Remark \ref{condlevel}).
Thus $(v_n)$ is a bounded minimizing sequence for $E$ in $H^1_{\mu}(\G)$. As such, it admits a subsequence (not relabeled)
such that $v_n \rightharpoonup v$, for some $v \in H^1(\G)$. In view of Theorem \ref{propgen} we have to exclude that
$m:= \mu -\|v\|_2^2 \in (0,\mu]$. In our case, $\II =  \ee_\G(\mu)< \ee_\R(\mu)$, and we immediately see that it cannot be $m=\mu$, since then we would have $\II \ge \ee_\R(\mu)$. The case $m\in (0,\mu)$ is also impossible since both
$\II  > \frac{\mu}m \ee_\R(m) > \ee_\R(\mu)$ and $\II  > E\left(\sqrt{\frac \mu{\mu-m}}u,\,\G\right) \ge \ee_\G(\mu)$ are false.
Thus it must be $m=0$, and then $v_n \to v$ strongly in $H^1(\G)$ and $E(v,\G) = \ee_\G(\mu)$. By \eqref{defvnun}, the same happens for $u_n$.
\medskip

\noindent{\em ii)} We first define $v_n$ as in \eqref{defvnun} so that, by \eqref{vn_min} and Lemma \ref{bdedseq},  $v_n$ is a bounded minimizing sequence for $E$ in $H_{\mu_\G}^1(\G)$.
It has been proved in \cite{ast3} that not all minimizing sequences are precompact: indeed since $\ee_\G(\mu_G)=0$, {\em every} sequence $(u_n)\in H_\mu^1(\G)$ such that $\|u_n'\|_2\to 0$ is a minimizing sequence. Compactness is however recovered in \cite{ast3} for those minimizing sequences that satisfy
\begin{equation}
\label{un_to_3}
\frac{\|v_n\|_6^6}{\|v_n'\|_2^2}\to3\qquad\text{as }n\to+\infty
\end{equation}
(these sequences are optimizing sequences for the Gagliardo-Nirenberg inequality \eqref{GN}, see \cite{ast3}).

We now check that \eqref{un_to_3} holds. Once this is proved, the same argument of Theorem 3.4 in \cite{ast3} shows that, up to subsequences, $v_n$ converges strongly in $H^1(\G)$ to 
some $v\in M(\mu_\G)$, and the same happens then for $u_n$.

To prove \eqref{un_to_3}, assume that for some $\delta>0$ and  some subsequence (not relabeled), 
\[
\frac{\|v_n\|_6^6}{\|v_n'\|_2^2}\to 3-2\delta \qquad\text{as }n\to+\infty.
\]
Then by  \eqref{defvnun}, also
\[
\frac{\|u_n\|_6^6}{\|u_n'\|_2^2} = \frac{\mu_n^2}{\mu_\G^2}\frac{\|v_n\|_6^6}{\|v_n'\|_2^2} \to 3-2\delta,
\]
since $\mu_n \to \mu_\G$. Thus, for every $n$ sufficiently large, $\|u_n\|_6^6\le (3-\delta) \|u_n'\|_2^2$, from which it follows
\[
E(u_n,\G) \ge \frac12\|u_n'\|_2^2  -\frac{3-\delta}6 \|u_n'\|_2^2 = \frac{\delta}6 \|u_n'\|_2^2 >0
\]
which is impossible since, as $u_n\in M(\mu_n)$, $E(u_n,\G) \le 0$. We have thus checked that $(v_n)$ is an optimizing sequence for inequality \eqref{GN} and we conclude as described above. 
\end{proof}

Finally we prove a simple sufficient condition for the existence of minimizers of doubly constrained problems
that we will use in Section \ref{nonuni}. 

\begin{theorem}
\label{compact} Let $p \in(2,6)$, let $\G$ be a noncompact graph and let $\ed$ be a fixed bounded edge of $\G$. Set
\[
V= \{ u \in H^1_\mu(\G) \; : \; \|u\|_{L^\infty(\G)} = \|u\|_{L^\infty(\ed)}\}
\]
and 
\[
\II = \inf_{u\in V} E(u,\G).
\]
If $\, \II \le \ee_\R(\mu)$, then $\II$ is achieved.
\end{theorem}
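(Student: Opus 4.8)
The plan is to run the direct method, with the boundedness from Lemma \ref{bdedseq} and the dichotomy of Theorem \ref{propgen}, and to exploit the fact that $\ed$ is \emph{bounded} in order to keep the $L^\infty$ constraint alive in the limit. First I would record that $\II$ is finite: since $p<6$, the Gagliardo--Nirenberg inequality \eqref{GN} bounds $E$ from below on $\Hmu(\G)$, so $\II\ge\ee_\G(\mu)>-\infty$; moreover $\ee_\R(\mu)=-\theta_p\mu^{2\beta+1}<0$ by Remark \ref{remsol}, whence the hypothesis gives $\II\le\ee_\R(\mu)<0$. Taking a minimizing sequence $(u_n)\subset V$, the bound $E(u_n,\G)\to\II$ and Lemma \ref{bdedseq} yield uniform boundedness in $H^1(\G)$, so up to subsequences $u_n\rightharpoonup u$ in $H^1(\G)$ and a.e.\ on $\G$, and Theorem \ref{propgen} applies with $m=\mu-\|u\|_2^2$.

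The decisive structural observation, which I expect to be the heart of the argument, is that $H^1(\ed)$ embeds compactly into $C(\ed)$ because $\ed$ is bounded; hence $u_n|_\ed\to u|_\ed$ uniformly and $\|u_n\|_{L^\infty(\ed)}\to\|u\|_{L^\infty(\ed)}$. Combining the defining constraint $\|u_n\|_{L^\infty(\G)}=\|u_n\|_{L^\infty(\ed)}$ with the pointwise bound $|u(x)|=\lim_n|u_n(x)|\le\liminf_n\|u_n\|_{L^\infty(\G)}$ coming from a.e.\ convergence, and taking the essential supremum, I obtain $\|u\|_{L^\infty(\G)}\le\liminf_n\|u_n\|_{L^\infty(\ed)}=\|u\|_{L^\infty(\ed)}$. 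Since the reverse inequality is trivial, the weak limit $u$ already satisfies $\|u\|_{L^\infty(\G)}=\|u\|_{L^\infty(\ed)}$. This is precisely what makes the constraint stable under weak convergence, and it is where the boundedness of $\ed$ is genuinely used.

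Finally I would rule out the lossy alternatives. In case (iii) ($u\equiv0$), uniform convergence on $\ed$ forces $\|u_n\|_{L^\infty(\ed)}\to0$, hence $\|u_n\|_{L^\infty(\G)}\to0$ by the constraint, so $\|u_n\|_p^p\le\|u_n\|_{L^\infty(\G)}^{p-2}\mu\to0$ and $\II=\lim_n E(u_n,\G)\ge0$, contradicting $\II<0$. In case (ii) ($0<m<\mu$) I set $w:=\sqrt{\mu/(\mu-m)}\,u\in\Hmu(\G)$, which, being a positive multiple of $u$, inherits the $L^\infty$ constraint just established, so $w\in V$ and $E(w,\G)\ge\II$; on the other hand $\tfrac{\mu}{m}\ee_\R(m)=-\theta_p\mu m^{2\beta}>-\theta_p\mu^{2\beta+1}=\ee_\R(\mu)\ge\II$ since $m<\mu$. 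Thus both entries of the minimum in \eqref{stimaL} are $\ge\II$, contradicting the strict inequality $\II>\min\{\tfrac{\mu}{m}\ee_\R(m),\,E(w,\G)\}$. Only case (i) remains: $u_n\to u$ strongly in $H^1(\G)$, so $\|u\|_2^2=\mu$, the $L^\infty$ constraint passes to the limit (strong $H^1$ convergence gives $L^\infty$ convergence on all of $\G$), whence $u\in V$, and by continuity of $E$ under strong $H^1$ convergence $E(u,\G)=\lim_n E(u_n,\G)=\II$. Therefore $\II$ is achieved.
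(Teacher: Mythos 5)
Your proposal is correct and follows essentially the same route as the paper: a minimizing sequence bounded via Lemma \ref{bdedseq}, the dichotomy of Theorem \ref{propgen}, weak closedness of $V$ (which the paper asserts and you rightly justify via the compact embedding on the bounded edge $\ed$), and the same two contradictions to exclude $m=\mu$ and $m\in(0,\mu)$. No gaps.
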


\begin{proof} It is a simple application of Theorem \ref{propgen}.
Let $(u_n) \subset V$ be a minimizing sequence for $E$ on $V$. By Lemma \ref{bdedseq}, $(u_n)$ is bounded in $H^1(\G)$ and we can assume (passing to a subsequence) that $u_n$ converges to some $u$ weakly in $H^1(\G)$ and strongly in $L^\infty_{loc}(\G)$. Since $V$ is weakly closed, $u\in V$. Let $m = \mu - \|u\|_2^2$: by 
Theorem  \ref{propgen}  we just have to rule out the cases $m=\mu$ and $m\in (0,\mu)$.

If $m=\mu$, then $u_n$ tends to zero in $L^\infty_{loc}(\G)$ and, since $u_n \in V$, in $L^\infty(\G)$. As the $L^2$ norm of $u_n$ is fixed, $u_n \to 0$ in $L^p(\G)$, implying $\II = \lim_n E(u_n,\G) \ge 0$, a contradiction.
If $m \in (0,\mu)$, 
\[
\II >\min \left\{  \frac{\mu}m \ee_\R(m),\,
E(w,\G)\right\},
\]
 where $w\in H^1_\mu(\G)$ is the renormalized limit
\begin{equation}
\label{defw}
w(x)=\sqrt{\frac \mu{\mu-m}}\,\, u(x).
\end{equation}
But this is impossible, since
\[
\II >  \frac{\mu}m \ee_\R(m) >   \ee_\R(\mu)
\]
violates the assumptions
and, as $w\in V$,
\[
\II > E(w,\G) \ge \II
\]
is false. Thus $m=0$ and $u_n \to u$ strongly in $H^1(\G)$, with $E(u,\G) \le \II$. 
\end{proof}

\section{Proof of Theorem \ref{thm1}}
\label{sec:lambda}

In this Section we prove the first of the main results of the paper, Theorem \ref{thm1}. 
We begin with some preliminary well-known facts and some lemmas that are of interest in their own as they display  general features that may be shared by other classes of graphs (see Remark \ref{rem:extensions} at the end of this Section).
The first lemma establishes a very useful monotonicity property of the functions $\Lp$ and $\Lm$ defined in \eqref{Lpm}.

\begin{lemma}
\label{monot}
Let $\mu_1,\mu_2$ be such that $\mu_1 < \mu_2$, $M(\mu_1)\neq\emptyset$, $M(\mu_2)\neq\emptyset$ and $\Lambda^+(\mu_1)$, $\Lambda^-(\mu_2)$ are attained. Then
$\Lp(\mu_1) < \Lm(\mu_2)$.
\end{lemma}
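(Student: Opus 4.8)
The plan is to attach to each of the two extremal ground states a one-parameter family of competitors obtained by rescaling the mass, and to read off the associated Lagrange multiplier as the slope of the resulting energy profile at the base mass. Let $u_1\in M(\mu_1)$ attain $\Lp(\mu_1)$ and $u_2\in M(\mu_2)$ attain $\Lm(\mu_2)$, so that $\LL(u_1)=\Lp(\mu_1)=:\lambda_1$ and $\LL(u_2)=\Lm(\mu_2)=:\lambda_2$; such minimizers exist by hypothesis. For $i=1,2$ and $\nu>0$ I would define $g_i(\nu):=E\bigl(\sqrt{\nu/\mu_i}\,u_i,\G\bigr)$, which is explicitly
\[
g_i(\nu)=\frac{\nu}{2\mu_i}\|u_i'\|_2^2-\frac1p\Bigl(\frac{\nu}{\mu_i}\Bigr)^{p/2}\|u_i\|_p^p .
\]
Since $\sqrt{\nu/\mu_i}\,u_i\in H^1_\nu(\G)$, each $g_i$ is an upper profile for the ground state energy, i.e.\ $g_i(\nu)\ge\ee_\G(\nu)$ for every $\nu$, with equality at the base mass, $g_i(\mu_i)=E(u_i,\G)=\ee_\G(\mu_i)$.

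First I would record two elementary facts about $g_i$. A direct differentiation, using $\|u_i\|_2^2=\mu_i$ and the definition of $\LL$, gives $g_i'(\mu_i)=\tfrac1{2\mu_i}\bigl(\|u_i'\|_2^2-\|u_i\|_p^p\bigr)=-\tfrac12\LL(u_i)=-\tfrac12\lambda_i$. Moreover, since $p>2$ and $\|u_i\|_p>0$, one computes $g_i''(\nu)=-\tfrac{p-2}{4}\mu_i^{-p/2}\nu^{(p-4)/2}\|u_i\|_p^p<0$, so each $g_i$ is \emph{strictly} concave. This strict concavity is the crucial ingredient that upgrades the monotonicity to a strict inequality.

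Next I would compare the two profiles at the two base masses. Because $g_i\ge\ee_\G$ everywhere and $g_i$ touches $\ee_\G$ exactly at $\mu_i$, evaluating each profile at the other mass yields $g_2(\mu_1)\ge\ee_\G(\mu_1)=g_1(\mu_1)$ and $g_1(\mu_2)\ge\ee_\G(\mu_2)=g_2(\mu_2)$. Writing $s_i:=\bigl(g_i(\mu_2)-g_i(\mu_1)\bigr)/(\mu_2-\mu_1)$ for the two chord slopes over $[\mu_1,\mu_2]$, these inequalities give $s_1\ge s_2$ after rearranging the numerators. On the other hand, strict concavity forces the slope at the left endpoint to strictly exceed the chord slope and the slope at the right endpoint to strictly fall short of it, so $g_1'(\mu_1)>s_1$ and $g_2'(\mu_2)<s_2$. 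Chaining these,
\[
-\tfrac12\lambda_1=g_1'(\mu_1)>s_1\ge s_2>g_2'(\mu_2)=-\tfrac12\lambda_2,
\]
whence $\lambda_1<\lambda_2$, that is $\Lp(\mu_1)<\Lm(\mu_2)$.

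The computational steps (the formulas for $g_i'$ and $g_i''$) are routine, so the real content — and the point I would be most careful to get right — is the strict-concavity chord estimate: it is precisely what rules out the degenerate scenario in which $\ee_\G$ is affine between $\mu_1$ and $\mu_2$, a scenario in which the coarser bounds coming only from concavity of $\ee_\G$ would yield merely $\lambda_1\le\lambda_2$. It is worth noting that the argument never invokes the NLS equation nor any graph-specific structure: it uses only that $u_1,u_2$ are mass-constrained minimizers (to get $g_i\ge\ee_\G$ with equality at $\mu_i$) and that the nonlinearity is genuinely superquadratic, $p>2$ (to get strict concavity of the $g_i$). This is why the lemma, and the monotonicity it encodes, is robust.
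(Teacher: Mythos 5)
Your proof is correct. The computations of $g_i'$ and $g_i''$ are right, the strict concavity of each $g_i$ follows from $p>2$ and $\|u_i\|_p>0$, the two cross inequalities $g_2(\mu_1)\ge g_1(\mu_1)$ and $g_1(\mu_2)\ge g_2(\mu_2)$ are exactly the minimality of $u_1$ at mass $\mu_1$ and of $u_2$ at mass $\mu_2$ against the rescaled competitors, and the chain $g_1'(\mu_1)>s_1\ge s_2>g_2'(\mu_2)$ delivers the strict inequality.

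The underlying ingredients are the same as in the paper's proof --- the paper also compares $E(\sqrt{\mu_2}v_1,\G)$ with $E(\sqrt{\mu_2}v_2,\G)$ and $E(\sqrt{\mu_1}v_1,\G)$ with $E(\sqrt{\mu_1}v_2,\G)$ for the normalized profiles $v_i=u_i/\sqrt{\mu_i}$, and its strict inequality likewise comes from $p>2$ (via $\mu_1^{p/2-1}<\mu_2^{p/2-1}$ acting on $\|v_1\|_p^p>0$). But the organization is genuinely different: the paper manipulates the two inequalities algebraically, first deducing $\|v_2\|_p^p\ge\|v_1\|_p^p$ and then bounding $\LL(\sqrt{\mu_2}v_1)-\LL(\sqrt{\mu_2}v_2)$, whereas you package everything into the strict concavity of the one-parameter energy profiles $g_i$ and a chord-slope comparison. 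Your formulation has the advantage of making transparent why $-\tfrac12\LL(u_i)$ is the natural object (it is the slope $g_i'(\mu_i)$ of a tangent profile to $\ee_\G$ from above), which foreshadows part (iii) of Theorem \ref{thm1} on the one-sided derivatives of $\ee_\G$; it also isolates cleanly where strictness enters, namely in the second derivative of $g_i$. The paper's version is more self-contained at the level of elementary inequalities and avoids invoking differentiability and the tangent-line characterization of strict concavity, but the two arguments are mathematically equivalent.
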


\begin{proof} Let $\mu_1,\mu_2$ be as above and $u_i \in M(\mu_i)$, $i=1,2$, satisfy $\LL(u_1) = \Lp(\mu_1)$, $\LL(u_2) = \Lm(\mu_2)$. Define
$v_1,v_2\in H_1^1(\G)$  by setting  $u_1=\sqrt{\mu_1}v_1$, $u_2=\sqrt{\mu_2}v_2$, respectively. Then, since $p>2$ and $\mu_1<\mu_2$,
\[
\LL(u_1)=\LL(\sqrt{\mu_1}v_1)=\mu_1^{\f p2-1}\|v_1\|_p^p-\|v_1'\|_2^2 < 
\mu_2^{\f p2-1}\|v_1\|_p^p-\|v_1'\|_2^2 = \LL(\sqrt{\mu_2}v_1).
\]
Thus, if we manage to prove that $\LL(\sqrt{\mu_2}v_1)\leq\LL(\sqrt{\mu_2}v_2)=\LL(u_2)$,  we conclude.

To see this we start from the  two obvious inequalities 
\[
E(\sqrt{\mu_2}v_2, \G) \le E(\sqrt{\mu_2}v_1, \G), \qquad E(\sqrt{\mu_1}v_1, \G) \le E(\sqrt{\mu_1}v_2, \G) 
\]
that read, after rearranging terms,
\begin{equation}
\label{uno}
\|v_2'\|_2^2-\|v_1'\|_2^2\leq\f 2p \mu_2^{\f p2 -1}\left(\|v_2\|_p^p-\|v_1\|_p^p\right)
\end{equation}
and
\[
\|v_1'\|_2^2-\|v_2'\|_2^2\leq\f 2p \mu_1^{\f p2 -1}\left(\|v_1\|_p^p-\|v_2\|_p^p\right).
\]
Coupling them, we see that 
\[
\mu_1^{\f p2 -1}\left(\|v_2\|_p^p-\|v_1\|_p^p\right) \le \mu_2^{\f p2 -1}\left(\|v_2\|_p^p-\|v_1\|_p^p\right),
\]
from which we deduce $\|v_2\|_p^p-\|v_1\|_p^p \ge 0$. Finally, by \eqref{uno},
\begin{align*}
\LL(\sqrt{\mu_2}v_1) &- \LL(\sqrt{\mu_2}v_2) =  \mu_2^{\f p2 -1}\left(\|v_1\|_p^p-\|v_2\|_p^p\right) - 
\|v_1'\|_2^2 + \|v_2'\|_2^2 \\
& \le  \mu_2^{\f p2 -1}\left(\|v_1\|_p^p-\|v_2\|_p^p\right) + \frac2p \mu_2^{\f p2 -1}\left(\|v_2\|_p^p-\|v_1\|_p^p\right) \\
&= \mu_2^{\frac{p}2 -1} \left(1-\frac2p\right)\left(\|v_1\|_p^p-\|v_2\|_p^p\right) \le 0.
\end{align*}
\end{proof}

\begin{remark}
\label{monlem}
Note that, under the assumptions of Theorem \ref{thm1}, $M(\mu)\neq\emptyset$ for every $\mu\in J$, so that Lemma \ref{monot} guarantees that $\Lm,\,\Lp$ are strictly increasing functions on $J$.
\end{remark}

We are now ready to prove the first of our main results.

\begin{proof}[Proof of Theorem \ref{thm1}] 
For every $\mu \in J$,  Proposition \ref{newcomp} guarantees that $M(\mu)$ is compact, so that $\Lambda^-(\mu)$, $\Lambda^+(\mu)$ are attained, and Remark \ref{monlem} shows that they are strictly increasing functions on $J$. Let $\overline\mu$ be an internal point of $J$. For every $\delta>0$ small we can write, by Lemma \ref{monot} and the monotonicity of $\Lm$,
\[
\Lp(\overline\mu - 2\delta) < \Lm(\overline\mu - \delta) < \Lm(\overline\mu) < \Lm(\overline\mu+\delta) \le \Lp(\overline\mu +\delta).
\]
Now if $\Lp$ is continuous at $\overline\mu$, letting $\delta \to 0$ we obtain simultaneously
\[
\lim_{\mu \to \overline \mu} \Lm(\mu) = \Lm(\overline\mu)\quad\text{ and } \quad \Lm(\overline\mu) = \Lp(\overline\mu),
\]
thus proving that also $\Lm$ is continuous at $\overline\mu$. A symmetric computation starting with a point where $\Lm$ is continuous shows that $\Lm$ is continuous at $\mu$ if and only if $\Lp$ is continuous at $\mu$ and moreover that at every $\mu$ where these functions are continuous, $\Lm(\mu) = \Lp(\mu)$, namely that $\LL$ is constant on $M(\mu)$.

Let $Z \subset J$ be the set of points where $\Lp$ (and hence $\Lm$) is not continuous. Since $\Lp$ is increasing, the set $Z$ is at most countable. Setting $I = J \setminus (Z \cup \partial J)$, the preceding argument shows that  $\LL$ is constant on $M(\mu)$, for every $\mu \in I$. Hence we can define $\overline\lambda : I \to \R$ by
\[
\overline\lambda(\mu) = \LL(u), \quad u \in M(\mu),
\]
which is strictly increasing by Remark \ref{monlem}.

To conclude, we only have to prove  \eqref{derlevel}. For $\mu \in J$, let $u\in M(\mu)$ such that $\LL(u) = \Lp(u)$. Let $\eps >0$ and note that 
\begin{align}
\label{exp1}
E \left(\sqrt{\frac{\mu +\eps}{\mu}}u,\G\right) &= \frac12 \left(1+\frac\eps\mu\right)\int_\G |u'|^2\dx-
\frac1p \left(1+\frac\eps\mu \right)^{p/2}\int_\G |u|^p\dx \nonumber \\
& =E(u,\G) +\frac\eps{2\mu}\left(\int_\G |u'|^2\dx -\int_\G |u|^p\dx\right) +o(\eps) \nonumber \\
& = E(u,\G) -\frac\eps 2 \LL(u) +o(\eps) = E(u,\G) -\frac\eps 2 \Lp(\mu)+o(\eps)
\end{align}
as $\eps \to 0$.
Therefore,
\[
\ee_\G(\mu + \eps) -\ee_\G(\mu) \le E \left(\sqrt{\frac{\mu +\eps}{\mu}}u,\G\right) - E(u,\G) =  -\frac\eps 2 \Lp(\mu)+o(\eps).
\]
Since $\ee_\G$ is concave (Remark \ref{condlevel}), it has left and right derivatives $(\ee_\G)'_-$ and  $(\ee_\G)'_+$  everywhere. Dividing by $\eps$ and letting it tend to zero, we obtain
\begin{equation}
\label{halfde}
(\ee_\G)'_+(\mu) \le -\frac12 \Lp(\mu).
\end{equation}
Next, for every $\eps > 0$, take $u_\eps \in M(\mu +\eps)$. Similarly to \eqref{exp1},
\[
E \left(\sqrt{\frac{\mu}{\mu+\eps}}u_\eps,\G\right) 
= E(u_\eps,\G) +\frac\eps 2 \LL(u_\eps) +o(\eps)
\]
and therefore 
\begin{equation}
\label{dx}
\ee_\G(\mu + \eps) -\ee_\G(\mu) \ge  E(u_\eps,\G) - E \left(\sqrt{\frac{\mu +\eps}{\mu}}u_\eps,\G\right)  =  -\frac\eps 2 \LL(u_\eps)+o(\eps).
\end{equation}
as $\eps \to 0$. 
Let $(u_n)$ be a subsequence of $(u_\eps)$ with $\mu_n := \|u_n\|_2^2 \to \mu$ as $n\to \infty$. By Proposition \ref{newcomp}, $(u_n)$ has a subsequence (not relabeled) such that $u_n$ converges strongly in $H^1(\G)$ to some $u\in M(\mu)$. By the continuity of $\LL$ and Lemma \ref{monot}, 
\[
\Lp(\mu) < \Lm(\mu_n) \le \LL(u_n) =  \LL(u) + o(1) \le \Lp(\mu) +o(1),
\]
namely $\LL(u_n) \to  \Lp(\mu)$ as $n\to \infty$. Since this happens for every subsequence of $(u_\eps)$, we deduce that $\LL(u_\eps) \to \Lp(\mu)$ as $\eps \to 0$ and hence, dividing \eqref{dx} by $\eps$ and letting it tend to $0$, we obtain
\[
(\ee_\G)'_+(\mu) \ge  -\frac12 \Lp(\mu),
\]
which coupled with \eqref{halfde}, establishes the first part of \eqref{derlevel}. For the second part one simply starts with $u \in M(\mu)$ such that $\LL(u) = \Lm(\mu)$ and then works with $\eps <0$, repeating the same arguments.
\end{proof}

\begin{remark}
\label{rem:extensions}
The whole discussion of this Section is rather general, and it can be  adapted to apply to other families of graphs (e.g. without half-lines). Specifically, the presence of half--lines affects the previous arguments only through the assumption $\elevel_\G(\mu)<\elevel_\R(\mu)$, which is used to rule out sequences of ground states vanishing weakly in $H^1(\G)$. Therefore, aiming at a generalization of our results to other classes of graphs, this condition should be replaced by requiring that $\elevel_\G(\mu)$ be smaller than the infimum of the energies attained by sequences converging weakly to $0$ in $H^1_\mu(\G)$. To give concrete examples, in the case of periodic graphs without half-lines, this infimum energy is $0$ (see e.g. \cite{adst}, \cite{dovetta}), whereas in the case of binary metric trees it is $\frac12\lambda_1\mu$, where $\lambda_1$ is the bottom of the spectrum of the Laplacian on the graph (see \cite{dst}).
\end{remark}

\section{Uniqueness: proof of Theorems \ref{thm2} and \ref{thm3}}
\label{uni}
In this Section we establish uniqueness of ground states of given mass on two specific classes of graphs, taking advantage of the characterization of Lagrange multipliers given in Theorem \ref{thm1}. We first discuss the case of graphs with one terminal edge and $N$ half-lines all emanating from the same vertex, proving the results reported in Theorem \ref{thm2}. Next we consider the tadpole graph and we provide the proof of Theorem \ref{thm3}. 

We start with the following simple but general result, that we will use also in the next Section.

\begin{lemma}
\label{same} 
Let $\G$ be any metric graph and let $u,v \in H^1(\G)$ be solutions of \eqref{equation} having the same mass $\mu$,  the same energy level and the same Lagrange multiplier $\lambda$. Then 
\begin{equation}
\label{alleq}
\int_\G |u'|^2 \dx = \int_\G |v'|^2 \dx \qquad\hbox{and}\qquad \int_\G |u|^p \dx = \int_\G |v|^p \dx\,.
\end{equation}
\end{lemma}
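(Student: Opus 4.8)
The plan is to reduce the statement to a small linear system in the two differences $\|u'\|_2^2-\|v'\|_2^2$ and $\|u\|_p^p-\|v\|_p^p$, and then to use $p\neq 2$ to force both to vanish. The three hypotheses (same mass, same Lagrange multiplier, same energy) furnish exactly two independent linear relations among these quantities, and the nondegeneracy of the resulting homogeneous system is what closes the argument.

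First I would record the relation coming from the common Lagrange multiplier. Since $u$ and $v$ solve \eqref{equation} with the same $\lambda$, the observation following \eqref{lambda u} gives $\LL(u)=\LL(v)=\lambda$; as $\|u\|_2^2=\|v\|_2^2=\mu$, this reads
\[
\|u\|_p^p-\|u'\|_2^2=\lambda\mu=\|v\|_p^p-\|v'\|_2^2,
\]
so that, subtracting,
\[
\|u\|_p^p-\|v\|_p^p=\|u'\|_2^2-\|v'\|_2^2. \qquad (\star)
\]
(Equivalently, one obtains this by multiplying \eqref{equation} by $u$, integrating over $\G$, and using the Kirchhoff conditions to annihilate the boundary terms, but the identity $\LL(u)=\lambda$ is already available.) Next I would use the equality of energy levels: writing $E(u,\G)=\frac12\|u'\|_2^2-\frac1p\|u\|_p^p$ and likewise for $v$, the hypothesis $E(u,\G)=E(v,\G)=\ee_\G(\mu)$ yields
\[
\frac12\bigl(\|u'\|_2^2-\|v'\|_2^2\bigr)=\frac1p\bigl(\|u\|_p^p-\|v\|_p^p\bigr). \qquad (\star\star)
\]

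Finally I would combine the two relations. Substituting $(\star)$ into $(\star\star)$ gives
\[
\Bigl(\tfrac12-\tfrac1p\Bigr)\bigl(\|u'\|_2^2-\|v'\|_2^2\bigr)=0,
\]
and since $p>2$ the factor $\tfrac12-\tfrac1p$ is nonzero, forcing $\|u'\|_2^2=\|v'\|_2^2$; then $(\star)$ immediately gives $\|u\|_p^p=\|v\|_p^p$, which is exactly \eqref{alleq}. There is no genuine obstacle here: the entire content is the remark that the hypotheses provide two independent linear constraints on the two quantities of interest and that $p\neq 2$ makes the system nondegenerate. The only point deserving a word of care is the vanishing of the boundary terms in the integration by parts underlying $\LL(u)=\lambda$, which is ensured by the Kirchhoff conditions at the vertices together with the decay of $H^1$ functions along the half-lines; this is precisely the statement of the remark after \eqref{lambda u} and may be invoked directly.
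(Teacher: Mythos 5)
Your proposal is correct and follows essentially the same route as the paper: both derive the identity $\|u'\|_2^2-\|v'\|_2^2=\|u\|_p^p-\|v\|_p^p$ from the shared Lagrange multiplier and mass, combine it with the equal-energy relation $\frac12(\|u'\|_2^2-\|v'\|_2^2)=\frac1p(\|u\|_p^p-\|v\|_p^p)$, and use $p\neq 2$ to conclude. The paper merely compresses the final elimination step into the phrase ``these two equalities immediately imply \eqref{alleq}''.
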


\begin{proof} Since the energy level is the same,
\[
\frac12\int_\G |u'|^2 \dx - \frac12\int_\G |v'|^2 \dx = \frac1p\int_\G |u|^p \dx - \frac1p\int_\G |v|^p \dx
\]
while, from $\mu \LL(u) = \lambda = \mu \LL(v)$,
\[
\int_\G |u'|^2 \dx - \int_\G |v'|^2 \dx = \int_\G |u|^p \dx -\int_\G |v|^p \dx.
\]
These two equalities immediately imply \eqref{alleq}.
\end{proof}	
	
\subsection{Graphs with $\boldsymbol{N}$ half--lines and a terminal edge}
\label{subsec: N l}

For every $N\geq2$ and $t>0$, let $\G_{N,t}$ be the graph of Theorem \ref{thm2}  (Figure \ref{fig-NlT}.a). We identify the terminal edge ${\ti}$ of length $t$ with the interval $[0,t]$ so that $x=0$ corresponds to the vertex $\vv$ and $x=t$ to the tip of the edge. Every half-line is as usual identified with (a copy of) the interval $[0,+\infty)$.

We begin by recalling the shape of ground states on $\G_{N,t}$, and in particular their monotonicity and symmetry properties.
		
\begin{lemma}
\label{qualit1}
Let  $u\in\Hmu(\G_{N,t})$ be a ground state of mass $\mu$ on $\G_{N,t}$. Then $u$ is strictly increasing on the terminal edge while on each half-line it is a copy of the same, strictly decreasing, function. In particular, $u$ attains its maximum on the tip of the terminal edge.

\end{lemma}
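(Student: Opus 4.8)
The plan is to prove the qualitative structure of ground states on $\G_{N,t}$ by combining the fact that ground states solve the NLS equation \eqref{equation} with elementary ODE analysis on each edge, together with rearrangement arguments to pin down symmetry among the half-lines. First I would recall that, since $u$ is a ground state, it is positive, solves $u''+u^{p-1}=\lambda u$ on every edge, and satisfies Kirchhoff conditions at $\vv$. On each half-line $[0,+\infty)$, the only positive $H^1$ solutions of this autonomous ODE that decay at infinity are translates of the (half-)soliton profile, which are strictly monotone; since $u\in H^1$ forces $u(x)\to 0$ as $x\to+\infty$, and since a positive solution cannot have an interior strict local minimum (by the phase-plane structure of $u''=\lambda u-u^{p-1}$, interior critical points of a positive solution are necessarily maxima when the trajectory stays positive), the restriction of $u$ to each half-line must be strictly decreasing. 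I would make this precise via the conserved quantity $\tfrac12 (u')^2 - \tfrac12\lambda u^2 + \tfrac1p u^p$, which is constant along each edge, and observe that on a half-line this constant is forced to be the value at infinity, leaving only the decreasing branch of the level set as admissible.

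Next I would address the terminal edge $\ti=[0,t]$, on which $u$ satisfies Neumann condition $u'(t)=0$ at the free tip $x=t$ and matches the half-lines at $x=0$. The strategy here is again to use the conservation law to argue that $u$ has no interior critical point on $(0,t)$, so it is strictly monotone on the terminal edge; then a sign argument determines the direction. The key input is the Kirchhoff condition at $\vv$: since each half-line contributes a strictly negative outgoing derivative (the decreasing half-soliton has $u'<0$ in the outgoing direction), the sum of outgoing derivatives being zero forces the outgoing derivative along the terminal edge at $\vv$ to be strictly positive, i.e. $u'(0^+)>0$ on $\ti$. Combined with the absence of interior critical points and the Neumann condition $u'(t)=0$ at the tip, this yields that $u$ is strictly increasing on $[0,t]$ and attains its maximum precisely at the tip $x=t$.

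Finally I would establish that all $N$ half-lines carry an identical copy of the same function. The natural tool is rearrangement: if two half-lines carried different decreasing soliton profiles, then one could lower the energy (or at least not increase it) by symmetrizing, contradicting minimality; more directly, all half-line restrictions are decreasing solutions of the same ODE with the same value $u(0)=u(\vv)$ at the junction, hence they share the same conserved energy constant and the same initial value, so by uniqueness of the ODE initial value problem they coincide. I expect the main obstacle to be the rigorous exclusion of interior critical points and the careful handling of the matching/Kirchhoff conditions: one must rule out the possibility that $u$ on a half-line starts by increasing before decaying, and argue that the decaying $H^1$ requirement together with positivity genuinely forces strict monotonicity rather than merely eventual monotonicity. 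This is where the phase-plane analysis of the planar system $(u,u')$ and the conserved Hamiltonian do the essential work, and where one must be careful that the level sets are traversed in only one admissible direction consistent with $u\in H^1$ and $u>0$.
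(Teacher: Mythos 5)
The paper's own proof is a two-line reference to Theorem 2.7 of \cite{ast} and Theorem 2.9 of \cite{ast2}, whose arguments are rearrangement-based; your route through the phase plane is therefore genuinely different, but it has a gap at exactly the point you yourself flag as ``the main obstacle''. The conserved Hamiltonian and the phase-plane structure cannot, by themselves, rule out an initial increase of $u$ on a half-line, nor an interior critical point on the terminal edge. Indeed, on a half-line the zero-energy (homoclinic) orbit admits, for every $b>0$, both the decreasing tail $\phi(\cdot+b)$ and the ``bumped'' profile $\phi(\cdot-b)$ with the same value at the vertex; one can distribute these among the $N$ half-lines and choose the terminal-edge branch so that the Kirchhoff condition at $\vv$ and the Neumann condition at the tip are both satisfied, producing perfectly good positive $H^1$ bound states of \eqref{equation} that are not monotone. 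Similarly, on $[0,t]$ the orbit through $(u(t),0)$ may be periodic, so the conservation law alone does not forbid interior critical points. What excludes all of these configurations is \emph{minimality}, not the ODE: one must exhibit a competitor of the same mass with strictly smaller energy, typically by counting preimages of almost every level and applying the rearrangement inequalities recalled in Section \ref{state} together with the equality analysis of \cite[Proposition 3.1]{ast}. Your proposal invokes rearrangement only for the symmetry among the half-lines, not for the monotonicity claims, which is where it is actually indispensable.

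Once monotonicity is granted, the rest of your argument is sound: the half-line restrictions all lie on the zero-energy orbit, share the value $u(\vv)$ and the sign of the derivative at the vertex, hence coincide by uniqueness for the Cauchy problem; and the Kirchhoff condition then forces a nonnegative outgoing derivative on $\ti$, which together with $u'(t)=0$ and the (now justified) absence of interior critical points yields strict increase and places the maximum at the tip. The fix is to replace the phase-plane exclusion of bumps with a genuine variational/rearrangement step --- which is precisely the content of the proofs in \cite{ast} and \cite{ast2} to which the paper defers.
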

\begin{proof} The proof follows working as in the proof of Theorem 2.7 in \cite{ast}. See also Theorem 2.9 in \cite{ast2}.
\end{proof}

\begin{proof}[Proof of Theorem \ref{thm2}] Let $t>0$ be fixed. The existence statements and the proof of \eqref{livelli_mu} are exactly the content of Theorem 2.6 in \cite{ast} (case $N=2$) and Theorem 4.4 in \cite{ast2} (case $N\ge 3$).
	
Let us thus turn to the uniqueness properties. Let $\mu\geq\overline\mu$ be such that all ground states with mass $\mu$ have the same Lagrange multiplier $\lambda$. By Theorem \ref{thm1}, this is true for all but at most countably many $\mu$. Assume that $u$ and $v$ are ground states of mass $\mu$. Since $u$ solves \eqref{equation}, the mechanical energy is a conserved quantity on each edge and, in particular,
\begin{equation}
\label{econs}
\frac12 |u'(x)|^2 + \frac1p |u(x)|^p  - \frac{\lambda}2 |u(x)|^2 = \begin{cases} 0 & \text{on each halfline} \\ C_u &\text{on the terminal edge} \end{cases}
\end{equation}
for some $C_u\in \R$ and all $x\in\G_{N,t}$.
The same holds for $v$, with a possibly different constant $C_v$. However, integrating \eqref{econs} and using \eqref{alleq},
\begin{align*}
t C_u &= \frac12 \int_{\G_{N,t}}  |u'|^2 \dx +  \frac1p \int_{\G_{N,t}} |u|^p \dx - \frac{\lambda}2  \mu \\
& =  \frac12 \int_{\G_{N,t}}  |v'|^2 \dx +  \frac1p \int_{\G_{N,t}} |v|^p \dx -  \frac{\lambda}2\mu = t C_v,
\end{align*}	
showing that $C_u = C_v$. 

By the Kirchhoff condition, $u'$ and $v' $ vanish at the tip $x=t$ of the terminal edge, and by  Lemma \ref{qualit1}, they have there a (positive) maximum point. Thus, from \eqref{equation},
\begin{equation}
\label{max}
u(t)^{p-1} - \lambda u(t) \ge u''(t) +  u(t)^{p-1} - \lambda u(t) = 0,
\end{equation}
i.e., $u(t) \ge \lambda^\frac1{p-2}$, and the same for $v$. Since the function $f(s) = \frac1p s^p -\frac{\lambda}2 s^2$ is injective  for $s \ge \lambda^{\frac1{p-2}}$, from
\begin{equation}
\label{maxeq}
f(u(t)) = C_u = C_v = f(v(t)),
\end{equation}
we deduce that $u(t) = v(t)$. Since the derivatives of $u$ and $v$ at $t$ are both zero, we see that $u\equiv v$ on the terminal  edge. But then also $u(0) =v(0)= :a$ and $u'(0) = v'(0)= :b$. Therefore on each halfline $u$ and $v$ (that solve the same differential equation) start with the same value $a$ and the same derivative $-b/N$, showing that they coincide on the whole of $\G$.
\end{proof}
	
\subsection{The tadpole graph}
\label{subsec: tad}
	
We now consider the tadpole graph $\T$ in Figure \ref{fig-NlT}.b. We denote by $\hh$  the half--line, by $\es$ the self-loop of $\T$ and by $2s:=|\es|$ its length. We identify $\es$ with the interval $[-s,s]$ so that $x=-s$ and $x=s$ correspond to the unique vertex $\vv$ of $\T$, and $\hh$, as usual, with $[0,+\infty)$.

%It is well--known (\cite[Corollary 3.4]{ast2}) that, for every $\mu>0$, ground states at mass $\mu$ on $\T$ always exist. 

The following lemma provides a description of  ground states with respect to the structure of the graph.
	
\begin{lemma}
\label{lem_tad}
Let  $u\in\Hmu(\T)$ be a ground state of mass $\mu$ on $\T$. Then $u$ is strictly decreasing on $\hh$, and $u$ is symmetric with respect to the point $x=0$ on $\es$ and strictly decreasing on $[0,s]$. In particular, $u$ attains its maximum on $\es$, at $x=0$.

\end{lemma}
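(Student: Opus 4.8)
The plan is to prove the three qualitative claims --- monotone decay on the half-line, symmetry on the self-loop, and strict decrease on $[0,s]$ --- by combining rearrangement techniques with the ground state optimality, exactly in the spirit of the analogous result for $\G_{N,t}$ (Lemma \ref{qualit1}) and of Theorem 2.7 in \cite{ast}. Throughout I work with the nonnegative ground state $u$, which solves \eqref{equation} with some $\lambda$ and satisfies the Kirchhoff condition at the single vertex $\vv$. Since $u\in H^1(\T)$ and $\T$ is noncompact, $u\to 0$ along the half-line $\hh$, so $\|u\|_\infty$ is attained at some interior point, and I expect the maximum to sit on the loop.

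\emph{First} I would locate the maximum and establish monotonicity on the half-line. On $\hh$ the function solves the ODE $u''=\lambda u-u^{p-1}$ with $u(\infty)=0$; the mechanical energy $\frac12|u'|^2+\frac1p u^p-\frac\lambda2 u^2$ is conserved and equals $0$ on $\hh$ (since $u,u'\to0$ at infinity). This forces $\lambda>0$ and, by the standard phase-plane/shooting analysis for the half-line soliton, $u$ is strictly monotone on $\hh$; were it not decreasing it would have to increase toward the vertex and then the half-line piece could be reflected/rearranged to strictly lower the energy while preserving the mass, contradicting minimality. Hence $u$ strictly decreases on $\hh$ and in particular $u$ does \emph{not} attain its maximum on the half-line, so the maximum lies on the loop $\es=[-s,s]$.

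\emph{Next} comes the symmetry on the loop, which is the heart of the matter. The key tool is the symmetric rearrangement $\widehat u$ recalled in the excerpt: because $\es$ is a loop, for almost every level $\tau\in(0,\max_\es u)$ the restriction of $u$ to $\es$ has at least two preimages, so the Polya--Szeg\H o inequality for $\widehat{\cdot}$ gives $\|(\widehat u)'\|_2\le\|u'\|_2$ on the loop, with equality only if $u$ is already symmetric and monotone. I would rearrange $u$ on the loop alone (leaving the half-line piece untouched), matching the common boundary value at $\vv$, to produce a competitor of the same mass and no larger energy. Minimality forces equality in Polya--Szeg\H o, which yields that $u$ is symmetric about $x=0$ on $\es$ and strictly decreasing on $[0,s]$; combined with the half-line decay this shows the maximum is attained on $\es$ at $x=0$. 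Strictness of the monotonicity then follows from the fact that $u$ solves the ODE and cannot have an interior critical point other than the symmetry point $x=0$ (by conservation of the mechanical energy on $\es$, a flat stretch would force $u$ constant, which is incompatible with the nontrivial boundary data at $\vv$).

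\emph{The main obstacle} I anticipate is the matching at the vertex $\vv$ when rearranging only the loop: one must check that the rearranged function still lies in $H^1(\T)$ and still meets the (unrearranged) half-line continuously, and that the Kirchhoff condition is compatible with the rearranged profile, so that the competitor is admissible and the strict inequality in Polya--Szeg\H o genuinely contradicts minimality unless $u$ is already symmetric. A delicate point is ensuring that the value $\widehat u$ takes at the endpoints $x=\pm s$ (both identified with $\vv$) equals $u(\vv)$, which is where the ``at least two preimages'' hypothesis must be used carefully: the rearrangement should be performed at the level of the loop as a closed curve of length $2s$ so that both endpoints automatically carry the common minimal value on $\es$, matching $u(\vv)$. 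Verifying this compatibility, and that equality in the rearrangement inequality forces genuine symmetry rather than merely equimeasurability, is the technically sensitive step; the remaining monotonicity and strictness assertions then follow from elementary ODE arguments as indicated.
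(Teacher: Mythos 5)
Your overall strategy (rearrangement plus minimality, then ODE/phase--plane arguments for symmetry and strictness) is in the right spirit, but there is a genuine gap at precisely the step you flag as ``technically sensitive'', and your proposed fix does not close it. If you rearrange $u$ on the loop alone, the symmetric rearrangement of $u|_{\es}$ takes the value $\min_{\es}u$ at the two endpoints $x=\pm s$, i.e.\ at the vertex $\vv$. A priori nothing prevents $u$ from dipping below $u(\vv)$ in the interior of the loop, in which case $\min_{\es}u<u(\vv)$ and the glued competitor is discontinuous at $\vv$, hence not in $H^1(\T)$. Your remedy --- performing the rearrangement ``at the level of the loop as a closed curve'' so that the endpoints ``automatically carry the common minimal value on $\es$, matching $u(\vv)$'' --- silently assumes $\min_{\es}u=u(\vv)$, which is essentially part of the conclusion you are trying to prove (that $u$ decreases from the centre of the loop out to the vertex). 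A related weakness affects your first step: with zero mechanical energy on $\hh$, the phase portrait only tells you that $u|_{\hh}$ is a piece of a soliton, which may perfectly well increase away from $\vv$ to an interior peak before decaying, so monotonicity on the half--line is not a phase--plane fact and must itself come from a variational argument you leave unspecified.

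The paper resolves both difficulties with a single device absent from your proposal: it chooses the level $\tau$ so that the superlevel set $A_\tau=\{x\in\T\,:\,u(x)>\tau\}$ has measure exactly $|\es|=2s$, then rearranges $u|_{A_\tau}$ symmetrically onto the loop and $u|_{\T\setminus A_\tau}$ decreasingly onto the half--line. Both rearranged pieces take the value $\tau$ at $\vv$, so the competitor is automatically continuous, and no prior knowledge of where the maximum sits is needed: almost every value above $\tau$ is attained at least twice in $A_\tau$ even if some of it lies on $\hh$, because $u$ equals $u(\vv)$ at the origin of $\hh$ and vanishes at infinity. Minimality then forces the preimage--counting inequalities to be equalities, which in turn forces $A_\tau$ to coincide with $\es$ up to a null set and $u$ to be injective (hence strictly decreasing) on $\hh$; symmetry and strict monotonicity on the loop then follow by phase--plane analysis, as you indicate. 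To repair your proof you would need to replace the ``rearrange the loop alone'' step by this level--set decomposition, or by an independent argument ruling out $\min_{\es}u<u(\vv)$ beforehand.
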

	
\begin{proof} Let $u\in\Hmu(\T)$ be a ground state of mass $\mu$. Since $u$ solves \eqref{equation}, the set $\{x\in \T\; :\; u(x) = \sigma \}$ has measure zero for every $\sigma >0$. It is then easy to see that there exists $\tau >0$ such that $A_\tau :=\{x \in \T \; :\; u(x) >\tau \}$ has measure $2s = |\es|$. Clearly $u(A_\tau)$ is connected and contains $\|u\|_\infty$. 
Now it is readily seen that almost every value in $u(A_\tau)$ is attained at least twice in $A_\tau$. Indeed this is immediate for almost all the values attained in $A_\tau\cap\es$. On the other hand, if there exists  some value $\sigma \in u(A_\tau)$ attained only in $\hh$, then $\max_{x \in \hh} u(x) \ge \sigma > \max_{x \in \es} u(x)$ and, in particular, $\sigma>u(\vv)$. Since on $\hh$ it holds $u(0)=u(\vv)$ and $u(x)\to 0$ as $x\to+\infty$, it follows that every value between $\max_{x \in \es} u(x)$ and  $\max_{x \in \hh} u(x)$ is attained at least twice in $\hh$.
Thus
\begin{equation}
\label{ge2}
\#\{x \in A_\tau \; :\; u(x) = \sigma\} \ge 2 \qquad \text{for almost every } \sigma \ge \tau.
\end{equation}
On the other hand, trivially,
\begin{equation}
\label{ge1}
\#\{x \in \T \setminus A_\tau \; :\; u(x) = \sigma\} \ge 1 \qquad \text{for almost every } \sigma \in(0, \tau).
\end{equation}
Denote by $v$ the restriction of $u$ to $A_\tau$ and by $w$ its restriction to $\G\setminus A_\tau$. Let $\widehat v:[-s,s] \to \R$ be the symmetric rearrangement of $v$ (see \cite{ast}). Since $\widehat v$ satisfies $\widehat v(-s) = \widehat v(s)$, it can be seen as an element of $H^1(\es)$.
Finally, let $w^*:[0,+\infty) \to \R$ be the decreasing rearrangement of $w$. Plainly, $w^*$ is an element of $H^1(\hh)$.  Defining $\varphi : \T \to \R$ by 
\[
\varphi(x) = \begin{cases} \widehat v(x) & \text{ if } x \in \es \\
w^*(x)  & \text{ if } x \in \hh, \end{cases}
\] 
we see that  $\varphi \in H_\mu^1(\T)$, as $\widehat v(\vv) = w^*(\vv) = \tau$, and
\begin{equation}
\label{prep}
E(\varphi, \T) = E(\widehat v,{\es}) + E(w^*,{\hh}) \le E(v, A_\tau) + E(w, \T\setminus A_\tau) = E(u,\T).
\end{equation}
We claim that \eqref{ge2} and \eqref{ge1} are in fact equalities for almost every $\sigma$. Indeed, if either of the two holds with the strict inequality on a set of positive measure, then by standard properties of rearrangements (see \cite[Proposition 3.1]{ast}) we obtain the strict inequality in \eqref{prep}, contradicting the minimality of $u$. 

Now suppose that $|A_\tau \cap \hh | = :\delta >0$; in this case, $| A_\tau \cap {\es} | = 2s-\delta$, and $|{\es} \setminus A_\tau | = \delta$. Since 
\begin{equation}
\label{ge3}
\#\{x \in {\es} \setminus A_\tau \; :\; u(x) = \sigma\} \ge 2  \qquad \text{for almost every } \sigma \in (\min_{\es} u, \tau)
\end{equation}
and ${\es} \setminus A_\tau \subset  \T \setminus A_\tau$, this violates the fact already proved that \eqref{ge1} is an equality.
Therefore $|A_\tau \cap \hh | = 0$, namely $\T \setminus A_t = \hh$. Since \eqref{ge1} is an equality, this shows 
that $u$ is strictly decreasing on $\es$.

The symmetry properties of $u$ on $\T$ follow easily by standard phase plane analysis.
\end{proof}

\begin{proof}[Proof of Theorem \ref{thm3}]
The existence statements are proved in Section 3 in \cite{ast2} ($p\in(2,6)$) and in Theorem 3.3 in \cite{ast3} (p=6). The rest of the proof is very similar to the one of Theorem \ref{thm2}. Let $\mu >0$ be such that all ground states with mass $\mu$ have the same Lagrange multiplier $\lambda$. By Theorem \ref{thm1}, this is true for all but at most countably many $\mu$. Assume that $u$ and $v$ are ground states of mass $\mu$. 

Using Lemma \ref{same}, we can prove that the energy constants $C_u$ and $C_v$ on $\es$ coincide. Since $x=0$ is a  maximum point for $u$ and $v$ on $	\es$, we deduce as in \eqref{max} and \eqref{maxeq} that $u(0) = v(0)$. Since $u'(0) = v'(0)$, we see that $u \equiv v$ on $\es$ and, in particular, $u(s) = v(s) = : a$ and $u'(s) = v'(s)  = : b <0$. Then $u$ and $v$ start on the halfline with the same value $a$ and, by symmetry of $u$ and $v$ about $\vv$ and the Kirchhoff condition, with the same derivative $2b$, showing that $u$ and $v$ coincide on all of $\T$.
\end{proof}

\section{Non--uniqueness: proof of Theorem \ref{thm4}}
\label{nonuni}

This Section is devoted to the proof of the non-uniqueness result. The proof is rather long and requires several intermediate steps, some of which are general in nature and do not use the specific form of the graph $\G$ mentioned in Theorem \ref{thm4}.

From now on, let $p\in(2,6)$ and $\mu>0$ be fixed. We begin by proving some preliminary lemmas. The problem they address can be described in the following way: we consider a graph $\G_\ell$ containing an edge $\ed_\ell$ of variable length $\ell$ and we set
\[
V_\ell = \{ u \in H^1_\mu(\G_\ell) \; : \; \|u\|_{L^\infty(\G_\ell)} = \|u\|_{L^\infty(\ed_\ell)} \}, \qquad \ee_\ell(\mu) = \inf_{u\in V_\ell}E(u,\G_\ell).
\]
We are interested in the behavior of $\ee_\ell(\mu)$ as $\ell \to 0$, or $\ell \to \infty$ when $\ed_\ell$ is either a terminal edge or a self-loop. The study of these {\em doubly constrained} problems is central in the proof of Theorem \ref{thm4}.

Let us start with the discussion of the regime $\ell \to \infty$.

\begin{lemma}
\label{above}
Let $\G_\ell$ be a graph containing a bounded edge $\ed_\ell$ of (variable) length $\ell$. There results
\begin{equation}
\label{supfromabove}
\limsup_{\ell \to \infty} \ee_\ell(\mu) \le \begin{cases} \ee_{\R^+}(\mu) & \text{ if $\ed_\ell$ is a terminal edge} \\
\ee_\R(\mu) & \text{otherwise.} \end{cases}
\end{equation}
The limit holds uniformly with respect to the lengths of the other edges of $\G_\ell$.
\end{lemma}

\begin{proof} It is essentially Lemma 3.2 in \cite{astbound}. We give a short proof for completeness. Assume first that $\ed_\ell$ is a terminal edge of $\G_\ell$. Let $\phi$ be the half soliton of mass $\mu$ on $\R^+$ (Remark \ref{remsol}). For every $\eps>0$,
by standard density arguments, there exists $\phi_\eps \in H_\mu^1(\R^+)$
with \emph{compact support}, such that
\[
E(\phi_\eps,\R^+) \le (1-\eps)E(\phi, \R^+) = (1-\eps)\ee_{\R^+}(\mu).
\]
Now, for every  $\ell$ large enough, the length of the support of $\phi_\eps$
becomes smaller than $\ell$: thus we can fit $\phi_\eps$ on $\ed_\ell$
so that its maximum is attained at the tip of $\ed_\ell$.
Setting $\phi_\eps$ equal to zero on $\G_\ell\setminus \ed_\ell$, we may regard $\phi_\eps$ as a function
in $V_\ell$. Therefore
\[
\ee_\ell(\mu) \le E(\phi_\eps,\G_\ell) = E(\phi_\eps,\R^+) \leq (1-\eps)\ee_{\R^+}(\mu),
\]
which proves, via the arbitrariness of $\eps$ and $\ell$, that $\limsup _{\ell \to \infty} \ee_\ell(\mu) \le \ee_{\R^+}(\ell)$.

If $\ed_\ell$ is not a terminal edge, we repeat the preceding discussion starting this time from the soliton of  mass $\mu $ centered at zero. The same construction as above allows one to construct a function $\phi_\eps \in V_\ell$ such that $E(\phi_\eps,\G_\ell) \le (1-\eps) \ee_\R(\mu)$ for every $\ell$ large, from which the required inequality follows. 

The uniformity of the limit is obvious, since the construction used in the proof does not depend  on the lengths of the other edges of $\G_\ell$.
\end{proof}

\begin{remark}
\label{equality} 
If $\G_\ell$  contains at least one half-line, it is well known (see e.g. Theorem 2.2. in \cite{ast}) that 
$\ee_{\G_\ell}(\mu) \ge \ee_{\R^+}(\mu)$. Therefore in this case the first inequality in \eqref{supfromabove} becomes
\[
\lim_{\ell \to \infty}  \ee_\ell(\mu) =  \ee_{\R^+}(\mu).
\]
\end{remark}

Next we establish an estimate from below when $\ed_\ell$ is a self-loop.

\begin{lemma}
\label{loopbelow}
Let $\G_\ell$ be a noncompact graph containing a self-loop $\ed_\ell$ of (variable) length $\ell$. There results
\[
\liminf _{\ell \to \infty} \ee_\ell(\mu) \ge  \ee_{\R}(\mu).
\]
The limit holds uniformly with respect to the lengths of the other edges of $\G_\ell$.
\end{lemma}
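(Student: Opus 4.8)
The plan is to show that when $\ed_\ell$ is a self-loop of length $\ell$, any competitor $u \in V_\ell$ (i.e.\ a function attaining its $L^\infty(\G_\ell)$ norm on the loop) has energy bounded below, asymptotically, by $\ee_\R(\mu)$. The key structural fact is the following: since $u$ attains its global maximum $M:=\|u\|_\infty$ somewhere on the self-loop $\ed_\ell$, and the two endpoints of $\ed_\ell$ coincide at a single vertex $\ww$ (or $\vv$), the function $u$ restricted to the loop is a closed curve. Parametrizing $\ed_\ell$ as $[0,\ell]$ with $u(0)=u(\ell)=u(\ww)$, the maximum of $u$ on the loop is taken at some interior point; hence \emph{every} value in $(u(\ww), M)$ is attained at least twice on $\ed_\ell$. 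This is precisely the hypothesis needed to apply the symmetric rearrangement, for which the Polya--Szeg\H{o} type inequality $\|(\widehat u)'\|_2 \le \|u'\|_2$ holds.

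First I would argue that it suffices to control the part of $u$ living at high values. Fix a level $\tau \in (0,M)$ and consider $A_\tau = \{x\in\G_\ell : u(x) > \tau\}$. On the loop, the two-preimage property above gives that almost every value $\sigma \in (\tau, M)$ has at least two preimages in $\ed_\ell \cap A_\tau$ (provided $\tau \ge u(\ww)$). The strategy is then to rearrange the full function $u$ \emph{symmetrically onto a single copy of $\R$}: because $u$ is maximized on a loop, the "doubling" available there lets me build a competitor $\widehat u \in H^1_\mu(\R)$ (a symmetric, decreasing-from-the-center function) with $E(\widehat u, \R) \le E(u, \G_\ell)$ up to an error that vanishes as $\ell \to \infty$. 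Since any such symmetric function on $\R$ has energy at least $\ee_\R(\mu)$ by definition of the soliton level, this would give $E(u,\G_\ell) \ge \ee_\R(\mu) - o(1)$, uniformly in $u \in V_\ell$, and hence the claimed $\liminf$.

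The technically delicate point — and what I expect to be the main obstacle — is controlling why the hypothesis $\ell \to \infty$ enters at all, and making the error uniform over \emph{all} $u \in V_\ell$ simultaneously. The subtlety is that the self-loop has \emph{total length $\ell$}, so the measure of $\{u > \tau\}$ on the loop can be as large as $\ell$; one must ensure that the rearrangement does not create a function on $\R$ whose "mass budget" is distorted, and that the vertex-matching condition $\widehat u(\text{center}) = M$, $\widehat u \to$ (boundary value) is consistent. The clean way is to use that for a self-loop the endpoints are identified, so the relevant rearrangement is onto a full line rather than a half-line (contrast with the terminal-edge case of Lemma~\ref{above}, which only gives $\ee_{\R^+}$). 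I would make this rigorous by invoking the graph rearrangement machinery of Section~3 of \cite{ast} together with the explicit symmetric rearrangement recalled in the excerpt, and then estimate the discrepancy between $E(u,\G_\ell)$ and $E$ of the rearranged function by the energy carried on the other (fixed) edges, which is where the uniformity with respect to the remaining lengths must be checked.

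Concretely, I would first establish the two-preimage property on the loop (an elementary consequence of $u$ being a closed loop maximized at an interior point), then apply symmetric rearrangement to produce $\widehat u$ on a line segment (or $\R$) satisfying $\|\widehat u\|_2^2 = \mu$ and $\|(\widehat u)'\|_2 \le \|u'\|_2$ with preserved $L^p$ norm, giving $E(\widehat u) \le E(u,\G_\ell)$. Finally I would compare $E(\widehat u)$ to $\ee_\R(\mu)$: since $\widehat u$ is an admissible mass-$\mu$ function on a line, $E(\widehat u) \ge \ee_\R(\mu)$ directly, completing the lower bound. The role of $\ell \to \infty$ is to guarantee there is enough room on the loop to accommodate the rearranged profile without truncation, so that no energy is lost in forcing the competitor back onto a finite loop; this is exactly the point where the limit (rather than a fixed $\ell$) is used, and where I would spend the most care.
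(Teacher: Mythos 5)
Your overall skeleton (two-preimage property on the loop, symmetric rearrangement onto $\R$, comparison with $\ee_\R(\mu)$) is the right one, but there is a genuine gap exactly at the point you yourself flag as delicate: you never identify the mechanism that makes the error vanish as $\ell\to\infty$, and the role you assign to the hypothesis $\ell\to\infty$ (``enough room on the loop to accommodate the rearranged profile without truncation'') is not the correct one. The obstruction is not room on the loop. It is that the Polya--Szeg\H{o} inequality for the symmetric rearrangement requires at least two preimages for almost \emph{every} value in $(0,\|u\|_\infty)$, whereas the loop only guarantees two preimages for values above $\delta_\ell:=\min_{\ed_\ell}u$; the values in $(0,\delta_\ell)$ are attained on the rest of the graph (possibly along a single half-line) and may have only one preimage. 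If that low range has positive measure, the inequality $\|(\widehat u)'\|_2\le\|u'\|_2$ is not available and the comparison $E(\widehat u,\R)\le E(u,\G_\ell)$ breaks down; your proposal acknowledges the restriction ``provided $\tau\ge u(\ww)$'' but never resolves it.

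The missing quantitative input, which is where $\ell\to\infty$ actually enters, is the elementary bound $\mu\ge\int_{\ed_\ell}|u|^2\dx\ge\ell\,\delta_\ell^2$, i.e. $\delta_\ell\le\sqrt{\mu/\ell}\to0$ uniformly in $u\in V_\ell$ and in the other edge lengths. The paper then repairs the two-preimage property on the residual range $(0,\delta_\ell)$ by attaching a pendant edge of length $1$ at a point of the loop where $u=\delta_\ell$ and extending $u$ linearly down to $0$ there; this modification costs only $O(\delta_\ell^2)$ in mass and energy, after which the resulting function has two preimages for almost every value in its full range and \eqref{Ncontr} with $N=2$ yields $E(u,\G_\ell)\ge\ee_\R(\mu)+O(\delta_\ell^2)$. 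Without the bound forcing $\delta_\ell\to0$ and without some device of this kind for the low values, the ``error that vanishes as $\ell\to\infty$'' in your argument is unsubstantiated, so the proof as written does not close.
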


\begin{proof} For every $u\in V_\ell$, let $\delta_\ell = \delta_\ell(u) = \min_{\ed_\ell} u$. Since
\[
\mu \ge \int_{\ed_\ell} |u|^2\dx \ge \ell \delta_\ell^2,
\]
we see that $\delta_\ell \to 0$ as $\ell\to \infty$. Let $p=p_\ell \in \ed_\ell$ be a point where $u(p) = \delta_\ell$ and let $\G_\ell'$ be the graph obtained from $\G$ with the addition of an extra terminal edge $\ed$ of length $1$ with one endpoint attached to $\G_\ell$ at $p$. On $\G_\ell'$ define a function $v$ by
\[
v(x) = \begin{cases} u(x) & \text{ if } x \in \G_\ell \\
\delta_\ell(1-x) & \text{ if } x \in {\ed} \sim [0,1]. \end{cases}
\]
Clearly $\nu_\ell := \|v\|_{L^2(\G_\ell')} = \mu + O(\delta_\ell^2)$ as $\ell\to \infty$ and, likewise,
\begin{equation}
\label{vu}
E(v,\G_\ell') = E(u,\G_\ell) + O(\delta_\ell^2).
\end{equation}
Now 
\[
\# \{x \in \G_\ell\; : \; u(x) = t\} \ge 2 \quad\text{for every } t \in (\delta_\ell, \|u\|_\infty),
\]
since $\ed_\ell$ is a self-loop on which $u$ attains its maximum while, obviously,
\[
\# \{x \in \G_\ell\; : \; u(x) = t\} \ge 1 \quad\text{for every } t \in (0, \delta_\ell).
\]
The function $v$ however takes every value $t \in [0, \delta_\ell)$ also on the extra edge $\ed$, so that
\[
\# \{x \in \G_\ell'\; : \; v(x) = t\} \ge 2 \quad\text{for almost every } t \in (0,  \|v\|_\infty).
\]
By \eqref{Ncontr},
\[
E(v,\G_\ell') \ge \ee_{\R}(\nu_\ell) =  \ee_{\R}(\mu) + O(\delta_\ell^2).
\]
and, in view of \eqref{vu},
\[
E(u,\G_\ell) = E(v,\G_\ell') + O(\delta_\ell^2) \ge  \ee_{\R}(\mu) + O(\delta_\ell^2).
\]
Letting $\ell \to \infty$, we conclude. Note that the argument does not depend on the lengths of the other edges, so that the limit is uniform. 
\end{proof}

To proceed with the limits as $\ell \to 0$ we need the following estimate. We recall from \cite{ast2} that the {\em compact core} of a noncompact graph $\G$ is the  graph obtained from $\G$ by removing every half-line.

\begin{lemma}
\label{minneg}
Let $\G$ be a graph made up of a compact core $\K$ of length $L$ and $N \ge 1 $ half-lines. For every $\mu>0$, there exists $\delta = \delta(L,N,\mu)>0$ such that if
\[
V = \{u\in H^1_\mu(\G)\; : \;  \|u\|_{L^\infty(\G)} = \|u\|_{L^\infty(\K)} \},
\]
then
\[
\inf_{u\in V} E(u,\G) \le -\delta.
\]
\end{lemma}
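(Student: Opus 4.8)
The plan is to exhibit a single explicit competitor $u\in V$ with strictly negative energy and then read off $\delta$ as $-E(u,\G)$. The guiding idea is twofold: if $u$ is taken to be \emph{constant} on the whole core and decreasing along each half-line, then its maximum is automatically forced onto $\K$; and being constant on $\K$ makes the construction completely insensitive to the internal topology of the core, so that the resulting bound will depend only on $L$, $N$ and $\mu$. This last point is, in my view, the only real subtlety of the statement (why the structure of $\K$, and not just its total length, may be ignored), and it is resolved at the very outset by placing a constant on $\K$.

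Concretely, I would fix a parameter $m>0$ (to be chosen) and set $u\equiv c$ on $\K$, where $c=\|\phi_{2m}\|_\infty=\phi_{2m}(0)$, while on each of the $N$ half-lines I put a copy of the half-soliton of mass $m$, namely $\phi_{2m}$ restricted to $\R^+$ (Remark \ref{remsol}). Since each half-soliton attains its maximum $c$ precisely at the junction vertex and is decreasing, $u$ is continuous across every contact vertex (both sides equal $c$), it lies in $H^1(\G)$, and it satisfies $\|u\|_{L^\infty(\G)}=c=\|u\|_{L^\infty(\K)}$; hence $u\in V$. Note that all half-lines carry the \emph{same} profile, so the common junction value $c$ is compatible with $u$ being constant on $\K$ no matter which vertices of $\K$ the half-lines are attached to.

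Next I would check that the mass constraint can be met. Since $u'\equiv 0$ on $\K$, the total mass is $\|u\|_2^2=c^2L+Nm$, and by the scaling rule of Remark \ref{remsol} the height $c=\|\phi_{2m}\|_\infty$ is a strictly increasing function of $m$ with $c\to 0$ as $m\to 0^+$. Thus $c^2L+Nm$ is continuous, strictly increasing in $m$, vanishes as $m\to 0^+$ and diverges as $m\to\infty$, so there is a unique $m=m(L,N,\mu)>0$ with $c^2L+Nm=\mu$. With this choice an edgewise computation gives
\[
E(u,\G)=N\,\ee_{\R^+}(m)-\frac{c^p}{p}\,L,
\]
the first term being $N$ times the half-soliton energy and the second the purely potential contribution of the flat core. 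Both terms are strictly negative: indeed $\ee_{\R^+}(m)=-2^{2\beta}\theta_p\,m^{2\beta+1}<0$ by \eqref{energiamezzosol}, while $-c^pL/p<0$. Hence $E(u,\G)<0$, and setting $\delta:=-E(u,\G)=N\,2^{2\beta}\theta_p\,m^{2\beta+1}+c^pL/p>0$ yields a quantity depending only on $L$, $N$, $\mu$ (and the fixed exponent $p\in(2,6)$), as required.

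I do not expect a serious obstacle: the construction is explicit and the two contributions to the energy reinforce one another rather than competing. The only points needing care are (i) the continuity and the location of the maximum at the contact vertices, so that indeed $u\in V$ — which is exactly why the core value is matched to the common height $c$ of the half-solitons — and (ii) the solvability of $c^2L+Nm=\mu$ for $m$, which follows from monotonicity in $m$. Everything else reduces to the elementary energy split above, with the negativity guaranteed by the subcritical half-soliton energy from Remark \ref{remsol}.
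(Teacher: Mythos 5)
Your proposal is correct and follows essentially the same route as the paper: the paper's proof uses exactly this competitor (a constant equal to the half-soliton's peak on the compact core, a half-soliton of mass $m$ on each half-line), the same mass balance $c^2L+Nm=\mu$, and the same energy split into the negative half-soliton contributions and the negative potential term from the flat core. The only cosmetic difference is that the paper additionally bounds $m$ from below by $\mu/(c^2\mu^{2\alpha-1}L+N)$ to produce an explicit formula for $\delta$, whereas you define $\delta$ implicitly through the unique solution $m(L,N,\mu)$ of the mass equation; both satisfy the statement.
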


\begin{proof} Let $\phi_m$ be a half soliton of mass $m$, and recall from Remark \ref{remsol} that $\max \phi_m = \phi_m(0) = cm^\alpha$, with $\alpha = 2/(6-p)$ and $c$ depending only on $p$. Let
\[
v(x) = \begin{cases} \phi_m(x) & \text{ on each half-line of $\G$ } \\
cm^\alpha  & \text{ on  $\K$. } \end{cases}
\]
Clearly $v\in V$. Imposing that the mass of $v$ is $\mu$ amounts to
\begin{equation}
\label{massau}
c^2m^{2\alpha}L + Nm = \mu,
\end{equation}
while the energy of $v$ (by \eqref{energiamezzosol}) is
\[
E(v,\G) = -\frac1p c^p m^{\alpha p} L - N\theta_p 2^{2\beta}m^{2\beta+1} \le 
- N\theta_p 2^{2\beta}m^{2\beta+1}.
\]
Now from \eqref{massau}, since $m\le \mu$,
\[
\mu = c^2m^{2\alpha}L + Nm \le c^2 \mu^{2\alpha-1} mL +Nm, 
\]
namely
\[
m \ge \frac{\mu}{c^2\mu^{2\alpha -1}L +N}.
\]
Inserting this in the estimate for $E$ yields
\[
\inf_{u\in V} E(u,\G)  \le E(v,\G) \le -N\theta_p 2^{2\beta}  \frac{\mu^{2\beta+1}}{(c^2\mu^{2\alpha -1}L +N)^{2\beta+1}} =:-\delta,
\]
and the claim is proved. Note also that the estimate is sharp if $L=0$, namely if the graph consists of $N$ half-lines emanating from a single vertex, for in this case the estimate reduces to
\[
\inf_{u\in V}  E(u,\G) \le -\theta_p \left(\frac2N\right)^{2\beta} \mu^{2\beta+1},
\]
which cannot be improved, due to \eqref{Ncontr}.
\end{proof}

\begin{lemma}
\label{baffoazero} 
Let $\G_\ell$ be a graph containing  either a terminal edge or a self-loop $\ed_\ell$ of (variable) length $\ell$ attached at some fixed vertex $\vv$ and let
\[
V_0= \{ u \in H^1_\mu(\G_0) \; : \; \|u\|_{L^\infty(\G_0)} = u(\vv)\}\,,\qquad \ee_0(\mu) = \inf_{u\in V_0}E(u,\G_0)\,.
\]
There results
\begin{equation}
\label{limite0}
\lim_{\ell \to 0} \ee_\ell(\mu) = \ee_0(\mu). \end{equation}
The limit holds uniformly with respect to the lengths of the other edges of $\G_\ell$.
\end{lemma}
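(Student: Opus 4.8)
The plan is to prove the two inequalities $\limsup_{\ell \to 0} \ee_\ell(\mu) \le \ee_0(\mu)$ and $\liminf_{\ell \to 0} \ee_\ell(\mu) \ge \ee_0(\mu)$ separately, using the graph $\G_0$ obtained from $\G_\ell$ by collapsing the short edge $\ed_\ell$ to the vertex $\vv$ (i.e. contracting the terminal edge or self-loop to a point). The underlying idea is that when $\ell$ is small, the edge $\ed_\ell$ carries a negligible amount of mass and energy, so the doubly constrained problems on $\G_\ell$ and $\G_0$ should have infima that differ by a quantity vanishing with $\ell$.

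For the upper bound, I would start from a near-optimal $u \in V_0$ on $\G_0$ with $E(u,\G_0) \le \ee_0(\mu) + \eps$, whose $L^\infty$ norm is attained at $\vv$. I would then build a competitor on $\G_\ell$: extend $u$ onto $\ed_\ell$ by a simple profile (for a terminal edge, the constant value $u(\vv)$, or a slight affine modification; for a self-loop, a symmetric cap) arranged so that the maximum of the new function sits on $\ed_\ell$, ensuring membership in $V_\ell$. A small renormalization restores the mass to exactly $\mu$; since the extra edge has length $\ell$ and the added values are bounded, both the mass correction and the energy contribution from $\ed_\ell$ are $O(\ell)$, giving $\ee_\ell(\mu) \le E(u,\G_0) + O(\ell) \le \ee_0(\mu) + \eps + O(\ell)$, whence the $\limsup$ bound follows by letting $\ell \to 0$ and then $\eps \to 0$.

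For the lower bound, I would take a near-optimal $u_\ell \in V_\ell$ with $E(u_\ell,\G_\ell) \le \ee_\ell(\mu) + \ell$. These functions are uniformly bounded in $H^1$ by Lemma \ref{bdedseq}, since their energies are bounded above (e.g. by Lemma \ref{above}, or by the upper bound just established). The goal is to produce from $u_\ell$ a competitor $w_\ell \in V_0$ on $\G_0$ with $E(w_\ell,\G_0) \le E(u_\ell,\G_\ell) + o(1)$: intuitively one discards the contribution of $\ed_\ell$, but one must preserve both the mass constraint and, crucially, the condition that the $L^\infty$ norm be attained on the retained part of the graph. Since the maximum of $u_\ell$ lies on $\ed_\ell$ (at a point whose distance to $\vv$ is at most $\ell$), the value $u_\ell(\vv)$ and $\|u_\ell\|_\infty$ differ by at most $\|u_\ell'\|_{L^2(\ed_\ell)} \cdot \ell^{1/2} = O(\ell^{1/2})$ by Cauchy--Schwarz; this lets one arrange that $w_\ell$ has its maximum at $\vv$ after a correction of size $o(1)$, placing $w_\ell$ in $V_0$.

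The main obstacle will be the lower bound, specifically the transfer of the doubly constrained condition $\|u\|_{L^\infty(\G_\ell)} = \|u\|_{L^\infty(\ed_\ell)}$ back to $\|w\|_{L^\infty(\G_0)} = w(\vv)$ while controlling the energy. The difficulty is that after deleting $\ed_\ell$ the function may no longer attain its maximum at $\vv$, and forcing this (for instance by a small upward adjustment of the value near $\vv$, or by slightly scaling) must be done without raising the energy by more than $o(1)$ and without violating the mass constraint. I expect this to require the uniform $H^1$ bound together with the smallness estimates $\|u_\ell\|_\infty - u_\ell(\vv) = O(\ell^{1/2})$ and $\int_{\ed_\ell}|u_\ell|^2 = O(\ell)$, and a careful renormalization argument. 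The uniformity of the limit in the lengths of the remaining edges is inherited from the fact that all these estimates depend only on $\ell$, $\mu$, and the uniform $H^1$ bound, and not on the rest of $\G_\ell$.
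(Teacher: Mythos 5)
Your overall architecture (two inequalities, constant extension for the upper bound, near-minimizers $u_\ell\in V_\ell$ plus the uniform $H^1$ bound and the estimate $\|u_\ell\|_\infty-u_\ell(\vv)=O(\sqrt{\ell})$ for the lower bound) matches the paper, and your upper bound is essentially the paper's argument. However, the lower bound, which you yourself flag as ``the main obstacle,'' is left unresolved, and this is a genuine gap: neither of the devices you propose (a local upward adjustment near $\vv$, or a rescaling) obviously works. An upward bump near $\vv$ must be $H^1$ and can increase the kinetic energy by a non-negligible amount, while multiplying by a constant close to $1$ does not move the location of the maximum at all, so it cannot by itself place the competitor in $V_0$.

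The missing idea is a \emph{truncation from above}: the paper sets $v_\ell=\min\{u_\ell,u_\ell(\vv)\}$ on $\G_0$. This automatically puts $v_\ell$ in $V_0$ (its supremum is $u_\ell(\vv)$, attained at $\vv$) and can only decrease the kinetic energy, so no energy price is paid for restoring the $L^\infty$ constraint. What then has to be checked is that the truncation changes the $L^2$ and $L^p$ norms only by $o(1)$; this is where a second ingredient you omitted enters. One needs $u_\ell(\vv)$ to be bounded away from $0$ as $\ell\to0$, which the paper obtains from Lemma \ref{minneg} via $-\delta+\sqrt{\ell}\ge E(u_\ell,\G_\ell)\ge-\frac1p\bigl(u_\ell(\vv)+C\sqrt{\ell}\bigr)^{p-2}\mu$. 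This lower bound on $u_\ell(\vv)$ forces the superlevel set $A_\ell=\{u_\ell\ge u_\ell(\vv)\}$ to have uniformly bounded measure (since $\mu\ge u_\ell(\vv)^2|A_\ell|$), and on a set of bounded measure the truncation alters the function by at most $C\sqrt{\ell}$ pointwise, so $\int_{A_\ell}\bigl(|u_\ell|^q-|u_\ell(\vv)|^q\bigr)\dx=o(1)$. Without the bound on $|A_\ell|$ your estimate $\|u_\ell\|_\infty-u_\ell(\vv)=O(\sqrt{\ell})$ is not by itself enough to control the change in the $L^q$ norms. Finally, since the truncation and the removal of $\ed_\ell$ perturb the mass by $o(1)$, one concludes via $E(v_\ell,\G_0)\ge\ee_0(\mu+o(1))=\ee_0(\mu)+o(1)$, using continuity of $\ee_0$ in the mass, rather than by renormalizing $v_\ell$ itself.
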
 

\begin{proof} We identify the edge $\ed_\ell$ with the interval $[0,\ell]$ in such a way that $\vv$ corresponds to $x=0$ when $\ed_\ell$ is a terminal edge, and to $x= 0$ and $x=\ell$ when $\ed_\ell$ is a self-loop.

For every $\ell>0$, let $u_\ell \in V_\ell$ satisfy $E(u_\ell,\G_\ell) \le \ee_\ell(\mu) + \sqrt{\ell}$. By Lemma \ref{bdedseq} and the uniform boundedness of $\elevel_\ell(\mu)$ as $\ell \to 0$, we can assume that $\|u_\ell\|_{H^1(\G_\ell)}$ is uniformly bounded with respect to $\ell$.  Since $\|u_\ell\|_\infty$ is attained on $\ed_\ell$, 
\[
\|u_\ell\|_\infty \le u_\ell(0) +C\sqrt{\ell} = u_\ell(\vv) + C\sqrt{\ell},
\]
for some $C>0$ independent of $\ell$. Then, by Lemma  \ref{minneg},
\begin{align*}
-\delta +\sqrt{\ell} &\ge \ee_\ell(\mu) +\sqrt{\ell} \ge E(u_\ell,\G_\ell)  \ge -\frac1p\int_{\G_\ell} | u_\ell |^p \dx \\
& \ge -\frac1p\|u_\ell\|_\infty^{p-2}\mu \ge -\frac1p \left(u_\ell(\vv) +C\sqrt{\ell}\right)^{p-2} \mu
\end{align*}
from which we see that $\liminf_{\ell \to 0} u_\ell(\vv) >0$. 
 
Let $A_\ell = \{x \in \G_0\; : \; u_\ell(x) \ge u_\ell(\vv)\}$. Since
\[
\mu \ge \int_{A_\ell} |u_\ell |^2\dx \ge |u_\ell(\vv)|^2 |A_\ell |
\]
and $u_\ell(\vv)$ is bounded away from $0$, we deduce that $|A_\ell | \le C$ for every $\ell$,
for some $C$ independent of $\ell$. 

Notice that as $\ell \to 0$, by the uniform boundedness of $u_\ell$,
\begin{equation}
\label{uqbded}
\int_{\ed_\ell} |u_\ell |^q\dx \le \ell \|u_\ell \|_\infty^q = o(1)
\end{equation}
and 
\begin{equation}
\label{diffbded}
0 \le \int_{A_\ell} |u_\ell |^q -|u_\ell(\vv)|^q \dx \le  \Big(\left( u_\ell(\vv)  + C\sqrt{\ell} \right)^q - u_\ell(\vv)^q\Big) |A_\ell | = o(1)
\end{equation}
for every $q \ge 1$.

Now define a function $v_\ell : \G_0 \to \R$ as
\[
v_\ell(x) = \min\{u_\ell(x), u_\ell(\vv)\} = \begin{cases} u_\ell(\vv) & \text{ if } x \in A_\ell  \\
u_\ell(x) & \text{ if } x \in \G_0\setminus A_\ell \end{cases}
\]
Clearly, $v_\ell \in V_0$ and 
\[
\int_{\G_0} | v_\ell'|^2 \dx \le \int_{\G_\ell} | u_\ell'|^2 \dx.
\]
Furthermore, since as $\ell \to 0$
\begin{align*}
\int_\Gz |\vell| ^q \dx &= \int_{\Gz\setminus \All} |\vell |^q \dx + \int_\All |\vell |^q \dx \\ 
& = \int_{\Gz\setminus \All} |\uell |^q \dx + \int_\All |\uell |^q \dx + \int_\All |\vell |^q \dx-\int_\All |\uell |^q \dx \\
& = \int_\Gell |\uell |^q \dx - \int_{e_\ell}  |\uell |^q \dx + \int_\All |\uell(\vv) |^q \dx-\int_\All |\uell |^q \dx \\
& = \int_\Gell |\uell |^q \dx + o(1)
\end{align*}
for every $q\ge 1$ by \eqref{uqbded} and \eqref{diffbded}, we see that
\[
\int_\Gz |\vell |^2 \dx = \int_\Gell |\uell |^2 \dx + o(1) = \mu + o(1)
\]
and
\[
\int_\Gz |\vell |^p  \dx = \int_\Gell |\uell |^p \dx + o(1).
\]
Therefore, by our choice of $\uell$,
\[
\ee_\ell(\mu) + \sqrt{\ell} \ge E(\uell,\Gell) \ge E(\vell,\Gz) + o(1) \ge \ee_0(\mu +o(1)) + o(1) = \ee_0(\mu) + o(1),
\]
showing that
\begin{equation}
\label{liminf}
\liminf_{\ell \to 0} \ee_\ell(\mu) \ge \ee_0(\mu).
\end{equation}
On the other hand, given any $\eps>0$ and $u\in V_0$ such that $E(u,\Gz) \le \ee_0(\mu) + \eps$, we can extend $u$ to a function $\uell$ on $\Gell$ by setting $\uell(x) = \uell(0) = u(\vv)$ on $\ed_\ell$.
Now $\uell \in V_\ell$ and  $\|\uell\|_2^2 = \|u\|_2^2 +o(1) = \mu + o(1)$ as $\ell \to 0$, so that
\[
\ee_\ell(\mu) + o(1) = \ee_\ell(\mu+o(1)) \le E(\uell,\Gell) \le E(u,\Gz) \le \ee_0(\mu) + \eps,
\]
whence
\[
\limsup_{\ell \to 0} \ee_\ell(\mu) \le \ee_0(\mu).
\]
This, together with \eqref{liminf}, proves \eqref{limite0}.
\end{proof}

We now turn our attention to the graph of Figure \ref{grafone}. It is made up of $N\ge 2$ half-lines, a terminal edge $\ti$ and a bounded edge $\er$ all emanating from a single vertex $\vv$. A self-loop $\es$ is attached at the tip $\ww$ of the bounded edge $\er$. We denote by $r,s,t$ the lengths of the edges $\er, \es, \ti$ respectively. The graph itself will be denoted by $\G$ for generic values of $r,s,t$, and by $\G(r,s,t)$ if some specific values of the lengths play a role.

Next we define
\[
\Vuno = \{u \in H_\mu^1(\G)\; : \; \|u\|_{L^\infty(\G)} =  \|u\|_{L^\infty(\ti)}\}
\]
and
\[
\Vdue = \{u \in H_\mu^1(\G)\; : \; \|u\|_{L^\infty(\G)} =  \|u\|_{L^\infty(\es)}\}
\]
and we consider the infimum of the energies on $\Vuno$ and $\Vdue$ as  functions of the lengths $r,s,t$. Precisely, we define
$\Funo, \Fdue : (0,+\infty)^3 \to \R$ as
\[
\Funo(r,s,t) = \inf_{u \in \Vuno} E(u,\G),\qquad \Fdue(r,s,t) = \inf_{u \in \Vdue} E(u,\G),
\]
that are easily shown to be continuous.

We begin by observing that if $\G_0 = \G(r,s,0)$ and 
\[
V_0= \{ u \in H^1_\mu(\G_0) \; : \; \|u\|_{L^\infty(\G_0)} = u(\vv)\}, 
\]
then every $v \in V_0$ satisfies
\[
\# \{x \in \G_0\; : \; v(x) = \tau\} \ge N \quad\text{for almost every } \tau \in (0,  \|v\|_\infty).
\]
Thus, by  \eqref{Ncontr},
\[
\inf_{v\in V_0} E(v,\G_0) \ge \left(\frac2N\right)^{2\beta} \ee_\R(\mu)
\]
and the conclusion of Lemma \ref{baffoazero} reads
\begin{equation}
\label{prec}
\lim_{t \to 0} \Funo(r,s,t) \ge \left(\frac2N\right)^{2\beta} \ee_\R(\mu).
\end{equation}
Similarly, if this time  $\G_0 = \G(r,0,t)$ and 
\[
W_0= \{ u \in H^1_\mu(\G_0) \; : \; \|u\|_{L^\infty(\G_0)} = u(\ww)\}, 
\]
then by Lemma \ref{baffoazero},
\begin{equation}
\label{limit2}
\lim_{s\to 0} \Fdue(r,s,t) = \inf_{u\in W_0} E(u,\G_0).
\end{equation}
The next lemma is crucial for the proof of Theorem \ref{thm4}.

\begin{lemma}
\label{mira} For every $r$ large enough and every $\eps>0$ small enough, there exist 
$\overline s = \overline s(r,\eps)$ and $\overline t = \overline t(r,\eps)$ such that
\[
\Funo(r,  \overline s,  \overline t) = \Fdue(r,  \overline s,  \overline t) = \ee_\R(\mu) -\eps.
\]
%The values $\Funo$ and $\Fdue$ are achieved by.... (metterlo qui o dopo?) 
\end{lemma}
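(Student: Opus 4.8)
The plan is to set $c:=\ee_\R(\mu)-\eps$ and, with $r$ fixed large and $\eps$ small, to locate a pair $(\overline s,\overline t)$ at which the continuous functions $\Funo(r,\cdot,\cdot)$ and $\Fdue(r,\cdot,\cdot)$ simultaneously equal $c$. The idea is to control the signs of $\Funo-c$ and $\Fdue-c$ on the four sides of a large rectangle $[s_0,s_1]\times[t_0,t_1]$ in the $(s,t)$–plane and then invoke the two--dimensional intermediate value theorem (Poincar\'e--Miranda) to produce a common zero of $(\Fdue-c,\;c-\Funo)$. Throughout, $\eps$ is taken so small that $\ee_{\R^+}(\mu)<c<\ee_\R(\mu)$, which is possible because, by \eqref{energiamezzosol}, $\ee_{\R^+}(\mu)=2^{2\beta}\ee_\R(\mu)<\ee_\R(\mu)<0$.

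First I would fix the two faces governed by $\Funo$, exploiting that $\ti$ is a terminal edge. By Lemma \ref{above} and Remark \ref{equality}, $\limsup_{t\to\infty}\Funo(r,s,t)\le\ee_{\R^+}(\mu)<c$ uniformly in $s$, so there is $t_1$ with $\Funo(r,s,t_1)<c$ for all $s>0$. On the other side, \eqref{prec}, uniform in $s$ thanks to the uniformity in Lemma \ref{baffoazero}, gives $\liminf_{t\to0}\Funo(r,s,t)\ge(2/N)^{2\beta}\ee_\R(\mu)\ge\ee_\R(\mu)>c$, hence a $t_0\in(0,t_1)$ with $\Funo(r,s,t_0)>c$ for all $s>0$. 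The face at large $s$ is handled by Lemma \ref{loopbelow}: since $\es$ is a self-loop, $\liminf_{s\to\infty}\Fdue(r,s,t)\ge\ee_\R(\mu)>c$ uniformly in $t$, yielding $s_1$ with $\Fdue(r,s_1,t)>c$ for all $t>0$.

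The delicate face is $s\to0$, and this is where the largeness of $r$ enters. By \eqref{limit2} (uniform in $t$), $\lim_{s\to0}\Fdue(r,s,t)=\inf_{u\in W_0}E\bigl(u,\G(r,0,t)\bigr)$, where $W_0$ consists of functions on $\G(r,0,t)$ attaining their maximum at the tip $\ww$ of the edge $\er$. To bound this infimum from above I would, exactly as in Lemma \ref{above}, fit a compactly supported, nearly optimal half-soliton of mass $\mu$ onto $\er$ with maximum at $\ww$ and extend it by $0$ on the rest of the graph; since $r$ is large this competitor lies in $W_0$ and has energy at most $(1-\eps')\ee_{\R^+}(\mu)$, a bound \emph{independent of $t$}. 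Choosing $\eps'$ (hence $r$) so that $(1-\eps')\ee_{\R^+}(\mu)<c$, we get $\limsup_{s\to0}\Fdue(r,s,t)<c$ uniformly in $t$, hence an $s_0\in(0,s_1)$ with $\Fdue(r,s_0,t)<c$ for all $t\in[t_0,t_1]$.

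Finally I would assemble the pieces. On the rectangle $Q=[s_0,s_1]\times[t_0,t_1]$ the continuous functions $f_1:=\Fdue-c$ and $f_2:=c-\Funo$ satisfy $f_1\le0$ on $\{s=s_0\}$, $f_1\ge0$ on $\{s=s_1\}$, $f_2\le0$ on $\{t=t_0\}$ and $f_2\ge0$ on $\{t=t_1\}$; by the Poincar\'e--Miranda theorem there is $(\overline s,\overline t)\in Q$ with $f_1(\overline s,\overline t)=f_2(\overline s,\overline t)=0$, that is $\Funo(r,\overline s,\overline t)=\Fdue(r,\overline s,\overline t)=c=\ee_\R(\mu)-\eps$. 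I expect the main obstacle to be precisely the $s\to0$ face: one must exhibit a competitor for $\Fdue$ whose energy drops below $c$ \emph{for every $t$ at once}, and only the largeness of $r$ leaves room for a half-soliton on $\er$. The remaining conceptual point is that four one-sided sign conditions do not by themselves force a common zero, so a genuinely two-dimensional intermediate value theorem is required in place of the scalar one.
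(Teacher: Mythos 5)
Your proposal is correct and follows essentially the same route as the paper: the same four boundary estimates (Lemma \ref{above} with Remark \ref{equality} for $t\to\infty$, \eqref{prec} for $t\to 0$, Lemma \ref{loopbelow} for $s\to\infty$, and the half--soliton competitor on the long edge $\er$ for $s\to 0$ via \eqref{limit2}), followed by the Poincar\'e--Miranda theorem on a rectangle. Your identification of the $s\to0$ face as the step where the largeness of $r$ is needed matches exactly the paper's choice of $r_0$ via \eqref{inf0}.
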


\begin{proof} Let $2\eps_0 = \ee_\R(\mu) - \ee_{\R^+}(\mu)$ and let $r_0$ be so large that 
\begin{equation}
\label{inf0}
\inf_{u\in W_0} E(u,\G_0) \le \ee_{\R^+}(\mu) + \eps_0.
\end{equation}
The choice of $r_0$ is possible since, as in Lemma \ref{above}, when $r\to \infty$ the above infimum tends to $\ee_{\R^+}(\mu)$.
Fix $r \ge r_0$ and $\eps \in(0,\eps_0)$.
By  \eqref{prec},
\begin{equation}
\label{tzero}
\lim_{t\to 0} \Funo(r,s,t) \ge   \left(\frac2N\right)^{2\beta} \ee_\R(\mu) > \ee_\R(\mu) -\eps,
\end{equation}
while by Lemma \ref{above} and Remark \ref{equality},
\begin{equation}
\label{tinfty}
\lim_{t\to \infty} \Funo(r,s,t) = \ee_{\R^+} (\mu)< \ee_\R(\mu) -\eps.
\end{equation}
Similarly, by our  choice of $r_0$, \eqref{limit2} and \eqref{inf0} show that
\begin{equation}
\label{szero}
\lim_{s\to 0} \Fdue(r,s,t) \le  \ee_{\R^+}(\mu) + \eps_0 < \ee_\R(\mu) -\eps,
\end{equation}
while by Lemma \ref{loopbelow},
\begin{equation}
\label{sinfty}
\liminf_{s\to \infty} \Fdue(r,s,t) \ge  \ee_\R(\mu).
\end{equation}
Now for every fixed $r\in (r_0,+\infty)$ and $\eps \in (0,\eps_0)$, define $F : (0,+\infty)^2 \to \R^2$ by 
\[
F(s,t) =(F_1(s,t),F_2(s,t)) =  (\Funo(r,s,t) -\ee_\R(\mu)+\eps,\Fdue(r,s,t) -\ee_\R(\mu)+\eps).
\]
The limit relations \eqref{tzero}--\eqref{sinfty} show that for every $r > r_0$ and every $\eps < \eps_0$, there exist $s_1,s_2$ and $t_1,t_2$ such that on $[s_1,s_2]\times[t_1,t_2]$ the map $F$ satisfies
\[
F_1 > 0 \text{ on } [s_1,s_2] \times \{t_1\}, \qquad   F_1 < 0 \text{ on } [s_1,s_2] \times \{t_2\},
\]
while
\[
F_2 < 0 \text{ on }  \{s_1\}\times [t_1,t_2], \qquad   F_2 > 0 \text{ on }  \{s_2\}\times [t_1,t_2].
\]
Hence, by the Brouwer--Miranda Theorem (\cite{Miranda}), there exist $(\overline s,\overline t)\in [s_1,s_2] \times [t_1,t_2]$ (of course depending on $r$ and $\eps$) such that
\[
F(\overline s,\overline t) = (0,0).
\]
Explicitly, this means that
\[
\Funo(r, \overline s, \overline t) = \Fdue(r, \overline s, \overline t) = \ee_\R(\mu) -\eps,
\]
as we wanted to prove.
\end{proof}

The next result shows that the values $\Funo(r, \overline s, \overline t)$ and $\Fdue(r, \overline s, \overline t)$ are achieved by functions that are in fact ground states of $E$ on $\G( r, \overline s, \overline t)$. To simplify notation we write $\G, \Vuno, \Funo$ for $\G( r, \overline s, \overline t), \Vuno( r, \overline s, \overline t), \Funo( r, \overline s, \overline t)$ and so on.

\begin{theorem}
\label{fattained} For every $r$ large enough and every $\eps>0$ small enough, let $\overline s$ and $\overline t$ be the lengths provided by the preceding lemma. Then the values $\Funo$ and $\Fdue$ are achieved. The functions $u$ and $v$ (depending on $r$ and $\eps$) that achieve $\Funo$ and $\Fdue$ are ground states of $E(\,\cdot\,,\G)$ over $H_\mu^1(\,\G)$. 
\end{theorem}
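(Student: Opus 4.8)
The plan is to treat the two assertions of the statement---attainment of $\Funo,\Fdue$ and the ground-state property of their minimizers---separately, reducing each to results already available in the excerpt.

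\emph{Attainment.} Since $\ti$ and $\es$ are bounded edges of $\G$ and, by Lemma \ref{mira},
\[
\Funo(r,\overline s,\overline t)=\Fdue(r,\overline s,\overline t)=\ee_\R(\mu)-\eps\le \ee_\R(\mu),
\]
Theorem \ref{compact} applies verbatim: once with $\ed=\ti$, in which case the constraint set is exactly $\Vuno$, and once with $\ed=\es$, in which case it is $\Vdue$. This produces functions $u\in\Vuno$ and $v\in\Vdue$ with $E(u,\G)=\Funo$ and $E(v,\G)=\Fdue$, disposing of the first half of the theorem.

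\emph{Ground-state property.} Because $E(u,\G)=\Funo=\ee_\R(\mu)-\eps<\ee_\R(\mu)$ and trivially $\ee_\G(\mu)\le\Funo$, the sufficient condition \eqref{suff} guarantees that ground states on $\G$ exist; it therefore suffices to prove $\ee_\G(\mu)=\Funo=\Fdue$, for then $u$ and $v$, which realize this common value, are ground states. The inequality $\ee_\G(\mu)\le\min\{\Funo,\Fdue\}$ is immediate from $\Vuno,\Vdue\subset H^1_\mu(\G)$. For the reverse inequality I would show that every ground state $w$ of mass $\mu$ on $\G$ attains its maximum either on $\ti$ or on $\es$, i.e. $w\in\Vuno\cup\Vdue$; this forces $\ee_\G(\mu)=E(w,\G)\ge\min\{\Funo,\Fdue\}$ and closes the chain of equalities. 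To locate the maximum I would first record the qualitative shape of $w$, obtained as in Lemmas \ref{qualit1} and \ref{lem_tad} by rearrangement and phase-plane analysis: $w$ is positive, strictly decreasing on each half-line (hence with no maximum interior to a half-line) and symmetric on the self-loop. The only remaining possibilities are that $\|w\|_\infty$ be attained at $\vv$, at $\ww$, or in the interior of $\er$, and each of these would force many preimages of almost every value below $\|w\|_\infty$. For a maximum at $\vv$, the Kirchhoff condition makes $w'$ vanish on all edges issuing from $\vv$, so $w$ is a full descending profile on each of the $N$ half-lines and thus takes almost every value in $(0,\|w\|_\infty)$ at least $N$ times; by \eqref{Ncontr} this gives $E(w,\G)\ge(2/N)^{2\beta}\ee_\R(\mu)\ge\ee_\R(\mu)$, contradicting $E(w,\G)=\ee_\G(\mu)<\ee_\R(\mu)$.

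The main obstacle is the analogous exclusion of a maximum at $\ww$ or inside $\er$. In contrast to the maximum-at-$\vv$ case, near the top of the profile the self-loop contributes only two preimages, so the crude bound \eqref{Ncontr} with exponent $N$ is not directly usable when $N\ge3$. I expect to handle these cases by splitting $w$ at a suitable level set $\{w>\tau\}$ and rearranging its super- and sub-level parts separately, in the spirit of the decomposition in the proof of Lemma \ref{lem_tad}, so as to recover either enough preimages or a direct energy comparison that forces $E(w,\G)\ge\ee_\R(\mu)$. Once the localization of the maximum of a ground state on $\ti\cup\es$ is established, the identity $\ee_\G(\mu)=\Funo=\Fdue$, and hence the claim that $u$ and $v$ are ground states, follows immediately.
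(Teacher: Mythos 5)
Your overall architecture coincides with the paper's: attainment of $\Funo$ and $\Fdue$ via Theorem \ref{compact} (legitimate, since Lemma \ref{mira} gives $\Funo=\Fdue=\ee_\R(\mu)-\eps\le\ee_\R(\mu)$), followed by the chain $\ee_\G(\mu)\le\Funo\le E(w,\G)=\ee_\G(\mu)$ for a ground state $w$ whose maximum lies on $\ti\cup\es$. The attainment step and this reduction are complete and correct.

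The gap is the localization of the maximum of a ground state on $\ti\cup\es$, which you correctly identify as the crux but leave as a plan (``I expect to handle these cases by splitting $w$ at a suitable level set\dots''). This is exactly the paper's Lemma \ref{noB2}, proved in the Appendix, and it is the only genuinely delicate point of the theorem. Your instinct that \eqref{Ncontr} alone cannot exclude a maximum attained only in the interior of $\er$ is right, but for a sharper reason than the one you give: if $u>u(\vv)$ on all of $[p,r]\cup\es$ (with $p$ the maximum point), then values just above $u(\vv)$ may have a \emph{single} preimage, so no preimage count applies at all. The paper's argument in that case is: take the last point $q<p$ with $u(q)=u(\vv)$, choose $\tau$ so that $A_\tau=\{x\in[q,r]\cup\es\,:\,u(x)\ge\tau\}$ has measure exactly $s$, observe that $A_\tau$ must meet $[q,r]$ in positive measure (otherwise the maximum would already be on $\es$), then symmetrically rearrange $u|_{A_\tau}$ onto $\es$ and decreasingly rearrange the remainder onto $[q,r]$; the \emph{strict} energy decrease, which contradicts minimality, comes from the fact that the remainder still meets $\es$ in a set of positive measure, where values are attained at least twice. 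The complementary case $\min_{[p,r]\cup\es}u\le u(\vv)$ is the easy one: every value then has at least two preimages and \eqref{Ncontr} with $N=2$ already forces $E(u,\G)\ge\ee_\R(\mu)$. Two minor remarks: a maximum at $\vv$ or at $\ww$ needs no exclusion, since those vertices belong to $\ti$ and $\es$ respectively, so $w$ already lies in $\Vuno$ or $\Vdue$; and the preliminary fact that the maximum sits on the compact core is not free either --- the paper imports it from Proposition 2.5 of the cited literature rather than from a monotonicity claim on the half-lines.
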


\begin{proof} The first part follows immediately from Theorem \ref{compact} since $\Funo = \Fdue  \le \ee_\R(\mu)$. We now prove the second part, namely that $u$ and $v$ are ground states on $H_\mu^1(\G)$.
First note that since $E(u,\G) = \Funo = \ee_\R(\mu)-\eps$, then $\ee_\G(\mu)< \ee_\R(\mu)$, and therefore $\ee_\G(\mu)$ is achieved by some function $w$ (Theorem 3.3. in \cite{ast2}). By Lemma \ref{noB2}, the maximum of $w$ is attained either on $\ti$ or on $\es$. Suppose it is attained on $\ti$ (the other case works in the same way). Thus, $w\in \Vuno$ and then
\[
\ee_\G(\mu) \le \Funo \le E(w,\G) = \ee_\G(\mu),
\]
showing that $\Funo = \ee_\G(\mu)$. Therefore $E(u,\G) = \Funo = \ee_\G(\mu)$, and $u$ is a ground state. Since $E(v,\G) = \Fdue = \Funo$, also $v$ is a ground state.
\end{proof}

We now analyze the behavior of the minimizers found in the preceding theorem when $r\to \infty$ and $\eps \to 0$. To this aim
we take two sequences $r_n \to \infty$ and $\eps_n\to 0$ and we  let $\overline{s}_n$ and $\overline{t}_n$ be the values provided by Lemma \ref{mira}. We set $\G_n = \G(r_n, \overline{s}_n, \overline{t}_n)$ and we call  $u_n$ and $v_n$  the functions in $\Vuno(r_n, \overline{s}_n, \overline{t}_n)$ and $\Vdue(r_n, \overline{s}_n, \overline{t}_n)$ that satisfy $E(u_n,\G_n) = E(v_n,\G_n) = \ee_\R(\mu)- \eps_n$ provided by Theorem \ref{fattained}.

We begin by studying the asymptotic behavior as $n\to \infty$ of $u_n$ and $v_n$. We recall that $\G_{N,t}$ denotes the graph made up of $N$ half-lines and a terminal edge of length $t$, all emanating from the same vertex $\vv$ (Figure \ref{fig-NlT}.a).

\begin{lemma} 
\label{moving}
Let $(r_n)$ and $(\eps_n)$ be sequences such that $r_n \to \infty$ and $\eps_n \to 0$ as $n\to \infty$. Then, up to subsequences,
\begin{itemize}
\item[i)] $\overline{s}_n \to \infty$;
\item[ii)] $\overline{t}_n \to t \in \R$, where $t$ is such that 
\begin{equation}
\label{slimit}
\ee_{\G_{N+1,t}}(\mu) = \ee_\R(\mu).
\end{equation}
\end{itemize}
\end{lemma}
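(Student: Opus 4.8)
The plan is to prove $i)$ and $ii)$ separately, each by a transplantation (competitor) argument fed by the asymptotics of $\Funo$ and $\Fdue$ established in Lemmas \ref{above}, \ref{loopbelow} and \ref{baffoazero}, together with the identity $\Funo(r_n,\overline s_n,\overline t_n)=\Fdue(r_n,\overline s_n,\overline t_n)=\ee_\R(\mu)-\eps_n\to\ee_\R(\mu)$ supplied by Lemma \ref{mira}. A repeatedly used elementary fact, read off from \eqref{energiamezzosol}, is that $\ee_{\R^+}(\mu)<\ee_\R(\mu)$ (because $2^{2\beta}>1$). A useful feature of the approach is that it never analyses the concentration of $u_n$ or $v_n$: the competitors are built on fixed limit graphs and transplanted onto $\G_n$, exploiting only that $r_n\to\infty$.

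For $i)$ I would argue by contradiction, extracting a subsequence with $\overline s_n\to s_0\in[0,+\infty)$. Let $\T_\sigma$ be the tadpole graph made of one half-line and a self-loop of length $\sigma$; recall from Theorem \ref{thm3} and \cite{ast2} that $\ee_{\T_\sigma}(\mu)<\ee_\R(\mu)$ for every $\sigma>0$, while the degenerate value $\sigma=0$ gives $\ee_{\T_0}(\mu)=\ee_{\R^+}(\mu)<\ee_\R(\mu)$. I would build a competitor in $\Vdue$ by transplanting a compactly supported near-minimizer of $E$ on $\T_{\overline s_n}$ (which, by Lemma \ref{lem_tad}, peaks on the loop): its loop goes onto $\es$, its half-line onto the long edge $\er$ (which fits for $n$ large since $r_n\to\infty$), and it is set to zero on the remaining edges. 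Such a function keeps its maximum on $\es$, hence lies in $\Vdue$, and after renormalizing its mass to $\mu$ its energy is $\ee_{\T_{\overline s_n}}(\mu)+o(1)$. Consequently $\Fdue(r_n,\overline s_n,\overline t_n)\le\ee_{\T_{s_0}}(\mu)+o(1)$, which is bounded away from $\ee_\R(\mu)$ and contradicts $\Fdue(r_n,\overline s_n,\overline t_n)=\ee_\R(\mu)-\eps_n\to\ee_\R(\mu)$. (When $s_0=0$ one may equally invoke \eqref{limit2} and a half-soliton on the long edge $\er$, giving $\Fdue\le\ee_{\R^+}(\mu)+o(1)$.) Hence $\overline s_n\to\infty$.

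For $ii)$ I would first show $(\overline t_n)$ is bounded. If $\overline t_n\to\infty$ along a subsequence, then, as $\ti$ is a terminal edge and $\G$ carries half-lines, Lemma \ref{above} and Remark \ref{equality} give $\Funo(r_n,\overline s_n,\overline t_n)\to\ee_{\R^+}(\mu)$ uniformly in the other lengths, contradicting $\Funo(r_n,\overline s_n,\overline t_n)=\ee_\R(\mu)-\eps_n\to\ee_\R(\mu)>\ee_{\R^+}(\mu)$. So, up to a subsequence, $\overline t_n\to t\in[0,+\infty)$. To identify $t$, note that $\ee_{\G_{N+1,t}}(\mu)\le\ee_\R(\mu)$ holds for free, since $\G_{N+1,t}$ carries half-lines. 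For the reverse inequality I argue again by contradiction: if $\ee_{\G_{N+1,t}}(\mu)<\ee_\R(\mu)$, then by \eqref{livelli_mu} (Theorem \ref{thm2} with $N+1\ge3$ half-lines) one has $\mu>\overline\mu(t)$, so ground states exist on $\G_{N+1,t}$ and, by continuity of $t\mapsto\ee_{\G_{N+1,t}}(\mu)$, also on $\G_{N+1,\overline t_n}$ with $\ee_{\G_{N+1,\overline t_n}}(\mu)\le\ee_\R(\mu)-c$ for some $c>0$ and all large $n$. By the analogue of Lemma \ref{qualit1} these ground states attain their maximum on the terminal edge; transplanting a compactly supported near-minimizer onto $\G_n$ --- placing the extra half-line of $\G_{N+1,\overline t_n}$ on the long edge $\er$ and setting the function to zero on $\es$ and on the far part of $\er$ (possible for $n$ large as $r_n\to\infty$) --- yields a function in $\Vuno$ with energy at most $\ee_\R(\mu)-c/2$. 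This forces $\Funo(r_n,\overline s_n,\overline t_n)\le\ee_\R(\mu)-c/2$, contradicting once more $\Funo(r_n,\overline s_n,\overline t_n)=\ee_\R(\mu)-\eps_n$ for $n$ large. Hence $\ee_{\G_{N+1,t}}(\mu)=\ee_\R(\mu)$, which is \eqref{slimit}.

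The main obstacle I expect is the careful bookkeeping of the transplantation on the sequence $\G_n$, whose edge lengths all vary with $n$: one must check that truncating the soliton/ground-state tails to compact support costs only $o(1)$ in mass and energy, that renormalizing to exact mass $\mu$ perturbs the energy by $o(1)$, and that the transplanted competitor genuinely retains its maximum on the prescribed edge (so as to belong to $\Vdue$, resp. $\Vuno$). The second, more structural, ingredient to secure is the continuity (and monotonicity) in the edge lengths of the ground-state energies $\sigma\mapsto\ee_{\T_\sigma}(\mu)$ and $t\mapsto\ee_{\G_{N+1,t}}(\mu)$, together with the strict inequality $\ee_{\T_\sigma}(\mu)<\ee_\R(\mu)$ for the tadpole: these are what turn the soft limit relations of Lemmas \ref{above}--\ref{baffoazero} into the quantitative energy gaps driving both contradictions. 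Everything else is a routine passage to the limit.
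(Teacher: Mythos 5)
Your proposal is correct and follows essentially the same route as the paper: both parts are proved by contradiction, transplanting compactly supported near-minimizers of the limit problems (the tadpole for $i)$, the graph $\G_{N+1,\overline t_n}$ for $ii)$) onto $\G_n$ using $r_n\to\infty$, and contradicting the level $\ee_\R(\mu)-\eps_n$. The only minor variation is in part $i)$: you contradict $\Fdue$ directly (hence must check the competitor peaks on $\es$ and lies in $\Vdue$), whereas the paper contradicts the unconstrained level $\ee_{\G_n}(\mu)$, which equals $\ee_\R(\mu)-\eps_n$ by Theorem \ref{fattained}, so no constraint membership needs to be tracked.
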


\begin{proof} Assume that $(\overline{s}_n)$ is bounded and let $\T_n$ be the tadpole graph with a self-loop of length $\overline{s}_n$. From \cite{ast2}, we know that there is $\delta>0$ so that
\[
\ee_{\T_n}(\mu) \le \ee_\R(\mu) -2\delta,
\]
for every $n$.
By the usual density arguments, there exists $w_n \in H_\mu^1(\T_n)$
with compact support, such that
\[
E(w_n,\T_n)\le \ee_{\T_n}(\mu) + \delta.
\]

Now, for every  $r_n$ large enough, the length of the support of $w_n$
becomes smaller than $r_n$: thus, we can view $w_n$ as an element of $\G_n$
by setting it equal to zero on $\G_n\setminus {\rm spt}(w_n)$. Therefore
\[
\ee_{\G_n}(\mu) \le E(w_n,\G_n) = E(w_n,\T_n) \le \ee_{\T_n}(\mu) +\delta \le  \ee_\R(\mu) -\delta,
\]
for every $n$, which is impossible since $\ee_{\G_n}(\mu) = \ee_\R(\mu) -\eps_n \to \ee_\R(\mu)$ as $n\to \infty$ by assumption. Point $i)$ is proved.

To prove $ii)$ first note that if (along some subsequence) $\overline{t}_n \to \infty$, then, by Remark \ref{equality},
$\ee_{\G_n}(\mu) \to \ee_{\R^+}(\mu) < \ee_\R(\mu)$, contradicting again the assumptions. Thus $\overline{t}_n$ must be bounded and we can assume that it converges to some $t$ as $n\to \infty$. We are left to show that \eqref{slimit} holds. The inequality $\ee_{\G_{N+1,t}}(\mu) \le \ee_\R(\mu)$ is always true by Theorem \ref{thm2}.
Assume that the strict inequality holds. Since by continuity 
\[
\ee_{\G_{N+1,\overline{t}_n}}(\mu) = \ee_{\G_{N+1,t}}(\mu) + o(1)
\]
as $n\to \infty$, we also have $\ee_{\G_{N+1,\overline{t}_n}}(\mu) <  \ee_\R(\mu)$ for every $n$ large, and therefore this level is achieved by some $v_n \in H_\mu^1(\G_{N+1,\overline{t}_n})$  (Theorem \ref{thm2}). By Lemma \ref{qualit1}, the maximum of $v_n$ is attained at the tip of the terminal  edge ${\tt t}_n$ of $\G_{N+1,\overline{t}_n}$. Once again by density arguments, it is easy to see that there exist functions $w_n \in H_\mu^1(\G_{N+1,\overline{t}_n})$ with compact support such that
\[
E(w_n,\G_{N+1,\overline{t}_n}) = E(v_n, \G_{N+1,\overline{t}_n}) +o(1)
\]
as $n\to \infty$ (it is enough to take, for instance, $w_n = \frac{\sqrt \mu}{\|(v_n -\delta_n)^+\|_2}(v_n -\delta_n)^+ $ with $\delta_n \to 0$). Since $v_n$ attains its maximum on the tip of ${\tt t}_n$, so does $w_n$ and, once extended to $0$, the function $w_n$ can be seen as an element of $\Vuno(r_n, \overline{s}_n,\overline{t}_n)$. Then, as $n\to \infty$,
\begin{align*}
\ee_\R (\mu) -\eps_n &= \ee_{\G_n}(\mu) \le E(w_n, \G_n) = E(w_n, \G_{N+1,\overline{t}_n}) = E(v_n, \G_{N+1,\overline{t}_n}) + o(1) \\
& = \ee_{\G_{N+1,\overline{t}_n}}(\mu) +o(1) = \ee_{\G_{N+1,t}}(\mu) +o(1),
\end{align*}
contradicting the strict inequality  $\ee_{\G_{N+1,t}}(\mu) < \ee_\R(\mu)$.
\end{proof}

The next two lemmas establish the asymptotic behavior of the Lagrange multipliers $\LL(u_n)$ and $\LL(v_n)$.

\begin{lemma}
\label{lambdau}  Let $u_n$ be the ground state on $\G_n$ found in Theorem \ref{fattained}. Then
\[
\LL(u_n) \to \LL(w),
\]
where $w$ is a ground state of mass $\mu$ on $\G_{N+1,t}$.

\end{lemma}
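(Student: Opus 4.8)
The plan is to show that, along a subsequence, $u_n$ converges \emph{strongly} in $H^1$ to a ground state $w$ of $\G_{N+1,t}$, and then to invoke the continuity of $\LL$. The heuristic is clear from Lemma \ref{moving}: since $r_n\to\infty$ the bounded edge $\er$ elongates into a half-line, while the self-loop $\es$ (of length $\overline s_n\to\infty$) and the far portion of $\er$ sit at distance $r_n$ from the bump of $u_n$, which lives near $\vv$ (its maximum being attained on $\ti$). In the limit the $N$ half-lines, the terminal edge $\ti$ (of length $\overline t_n\to t$) and the edge $\er$ merge into the graph $\G_{N+1,t}$ of \eqref{slimit}. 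The whole point is therefore to transplant $u_n$ onto $\G_{N+1,t}$ with an $o(1)$ error in mass and energy, after showing that the self-loop and the far field are asymptotically negligible.

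First I would record the a priori estimate that makes the transplantation possible: a uniform lower bound on the Lagrange multipliers $\lambda_n:=\LL(u_n)$. Writing $E(u_n,\G_n)=\frac12\|u_n'\|_2^2-\frac1p\|u_n\|_p^p=\ee_\R(\mu)-\eps_n$ and eliminating $\|u_n'\|_2^2$ from the definition of $\LL$, one obtains the identity
\[
\lambda_n\mu=\frac{p-2}{p}\,\|u_n\|_p^p-2E(u_n,\G_n)=\frac{p-2}{p}\,\|u_n\|_p^p+2\bigl(\theta_p\mu^{2\beta+1}+\eps_n\bigr),
\]
using $\ee_\R(\mu)=-\theta_p\mu^{2\beta+1}$ from Remark \ref{remsol}; since $p>2$ and $\ee_\R(\mu)<0$ this gives $\lambda_n\ge\lambda_0>0$ for some constant $\lambda_0$ and all large $n$. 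This positive lower bound is exactly what controls the self-loop: where $u_n$ is small one has $u_n''=\lambda_n u_n-u_n^{p-1}\ge\frac{\lambda_0}{2}u_n$, so $u_n$ is convex there and decays exponentially. As $\int_\er|u_n|^2\dx\le\mu$ and $r_n\to\infty$, $u_n$ becomes small somewhere on the long edge $\er$, whence $\eta_n:=u_n(\ww)\to0$ and, by the exponential barrier, $\int_\es|u_n|^2\dx$, $\int_\es|u_n'|^2\dx$ and $\int_\es|u_n|^p\dx$ are all $O(\eta_n^2)=o(1)$, uniformly in the loop length $\overline s_n$. I expect this step --- ruling out any nonvanishing mass or energy trapped on $\es$ --- to be the main obstacle, and the bound $\lambda_n\ge\lambda_0$ is the device that removes it; here I would lean on the qualitative description of ground states (monotonicity along edges, as in Lemmas \ref{qualit1} and \ref{lem_tad}) to ensure that $\eta_n=\max_\es u_n$ and that $u_n$ is monotone on $\er$ past its first small value.

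With this in hand I would define $\tilde u_n\in H^1(\G_{N+1,t})$ by keeping $u_n$ on the $N$ half-lines, rescaling the terminal edge from length $\overline t_n$ to $t$ (an $o(1)$ perturbation since $\overline t_n\to t$ and $(u_n)$ is bounded in $H^1$ by Lemma \ref{bdedseq}), identifying $\er$ with the initial segment of the $(N+1)$-st half-line and extending beyond it by an exponential tail starting at $\eta_n$, and simply discarding the self-loop. By the previous step $\|\tilde u_n\|_2^2=\mu+o(1)$, $E(\tilde u_n,\G_{N+1,t})=\ee_\R(\mu)+o(1)$ and $\LL(\tilde u_n)=\lambda_n+o(1)$, while $\tilde u_n$ still attains its maximum on the terminal edge. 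After normalizing the mass to $\mu$, the sequence $\tilde u_n$ is thus a minimizing sequence for the doubly constrained problem on $\G_{N+1,t}$ with constraint edge $\ti$, whose infimum equals $\ee_{\G_{N+1,t}}(\mu)=\ee_\R(\mu)$ by \eqref{slimit} (ground states of $\G_{N+1,t}$ have their maximum on the terminal edge).

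Finally I would conclude by the compactness of this doubly constrained problem. Since its value is $\le\ee_\R(\mu)$, Theorem \ref{compact} applies, and running its proof (i.e. Theorem \ref{propgen}) on $\tilde u_n$ rules out the two bad alternatives: vanishing is impossible because the maximum stays on the fixed edge $\ti$ and $\ee_\R(\mu)<0$, while splitting is impossible because $\frac{\mu}{m}\ee_\R(m)>\ee_\R(\mu)$ and the renormalized weak limit would again lie in the weakly closed constraint set. Hence, up to a subsequence, $\tilde u_n\to w$ strongly in $H^1(\G_{N+1,t})$, and $w$ is a minimizer with $E(w,\G_{N+1,t})=\ee_{\G_{N+1,t}}(\mu)$, i.e. a ground state, exactly as in Theorem \ref{fattained}. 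By continuity of $\LL$ we get $\LL(\tilde u_n)\to\LL(w)$, and therefore $\LL(u_n)=\LL(\tilde u_n)+o(1)\to\LL(w)$ with $w$ a ground state of mass $\mu$ on $\G_{N+1,t}$, as claimed.
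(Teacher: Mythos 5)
Your overall architecture --- transplant $u_n$ onto $\G_{N+1,t}$ with $o(1)$ errors in mass and energy, recognize the result as a minimizing sequence for the doubly constrained problem there (whose value is $\ee_{\G_{N+1,t}}(\mu)=\ee_\R(\mu)$ by \eqref{slimit}), invoke Theorem \ref{compact} for strong convergence to a ground state $w$, and conclude by continuity of $\LL$ --- is exactly the paper's. The difference, and the problem, lies in how the self-loop is disposed of. The paper does \emph{not} prove that the mass and energy carried by $\es_n$ vanish: it uses only the trivial fact that $\delta_n:=\min_{\es_n}u_n\to0$ (from $\mu\ge\overline{s}_n\delta_n^2$) and then \emph{rearranges} $u_n$ restricted to $\es_n$ into a function $\psi_n$ on the new half-line $[r_n,+\infty)$ via Lemma \ref{stiro}, so that whatever the loop carries is transported rather than discarded, with errors only of order $\delta_n^2$; any loss of compactness caused by the transported piece is then excluded a posteriori by Theorem \ref{compact}. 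You instead try to show a priori that $\eta_n:=u_n(\ww)\to0$ and that the loop content is $O(\eta_n^2)$, and this is where your argument has a genuine gap.

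Concretely: your uniform bound $\lambda_n\ge\lambda_0>0$ is correct (and a nice observation), and it does give $u_n''\ge\tfrac{\lambda_0}{2}u_n$ wherever $u_n\le(\lambda_0/2)^{1/(p-2)}$. But convexity plus smallness at one point does \emph{not} yield decay towards $\ww$: the differential inequality $u''\ge cu$ admits the growing mode $e^{\sqrt{c}\,x}$, so $u_n$ may a priori dip to $\sqrt{\mu/r_n}$ somewhere in the interior of $\er$ and then climb back up to form a secondary bump at $\ww$ and on the loop. Excluding such a two-bump configuration is an energetic statement about \emph{minimizers}, not an ODE statement, and the monotonicity results you lean on (Lemmas \ref{qualit1} and \ref{lem_tad}) are proved for $\G_{N,t}$ and for the tadpole, not for $\G_n$; nothing in the paper asserts that a ground state of $\G_n$ is monotone on $\er$ past its minimum, nor that $\max_{\es_n}u_n=u_n(\ww)$. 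To close the gap you would need a rearrangement argument in the style of Lemma \ref{noB2} ruling out the secondary bump, or --- much more economically --- the paper's device of Lemma \ref{stiro}, which makes the question moot. The remainder of your proof (the rescaling of the terminal edge, the verification that the transplanted functions stay in the constraint set, and the dichotomy analysis via Theorem \ref{propgen}) is sound and matches the paper.
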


\begin{proof} The functions $u_n$ satisfy $E(u_n,\G_n) = \ee_\R(\mu) -\eps_n$ and, as minimizers of $E$ over 
$\Vuno(r_n, \overline{s}_n, \overline{t}_n)$, attain their $L^\infty$ norm on $\ti_n$. Since $ \overline{s}_n\to \infty$, clearly $\delta_n := \min_{\es_n} u_n \to 0$ as $n\to \infty$.

We consider the graph $\G_{N+1,t}$ obtained from $\G_n$ by replacing the terminal edge $\ti_n$ with a terminal edge $\ti$ of length $t$ (identified with the interval $[0,t]$) and by replacing the self-loop $\es_n$ with a half-line (identified with the interval $[r_n,+\infty)$).

For every $n$, let $\psi_n \in H^1(\R^+)$ be the function constructed via Lemma \ref{stiro}. 
We define a sequence $\overline{w}_n \in H^1(\G_{N+1,t})$ by
\[
\overline{w}_n(x) = \begin{cases} u_n\left(\frac{\overline{t}_n}{t}x\right) & \text{ on the bounded edge $\ti$ of length $t$} \\
u_n(x) & \text{ on the half-lines and on the edge $\er_n$ of length $r_n$} \\
\psi_n(x-r_n) & \text{ on the half-line $[r_n, +\infty)$}. \end{cases}
\]
Since $\overline{t}_n \to t$ and $\delta_n \to 0$, by Lemma \ref{stiro} it is readily seen that $\nu_n:= \|\overline{w}_n\|_2^2 = \mu + o(1)$ as $n\to \infty$. Finally, set 
\[
w_n =\sqrt{\frac{\mu}{\nu_n}}\, \overline{w}_n \in H_\mu^1(\G_{N+1,t}).
\]
By the properties of $u_n$, $\overline{t}_n$, $\delta_n$, and Lemma \ref{stiro}, one easily checks that as $n\to \infty$,
\begin{equation}
\label{simili}
\int_{\G_{N+1,t}} |w_n'|^2\dx = \int_{\G_n} |u_n'|^2\dx+ o(1)
\end{equation}
and
\begin{equation}
\label{similip}
\int_{\G_{N+1,t}} |w_n|^p\dx = \int_{\G_n} |u_n|^p\dx+ o(1).
\end{equation}
From this and \eqref{slimit} it follows that 
\[
E(w_n,\G_{N+1,t}) = E(u_n,\G_n) +o(1) = \ee_\R(\mu) -\eps_n +o(1) = \ee_{\G_{N+1,t}} +o(1),
\]
namely that $w_n$ is a minimizing sequence for $E$ on $H_\mu^1(\G_{N+1,t})$. By construction, $w_n$ attains its $L^\infty$ norm on the bounded edge. This shows that minimizing $E$ on $H_\mu^1(\G_{N+1,t})$ is the same as minimizing it among functions that attain their maximum on the bounded edge. For this reason, by Theorem \ref{compact}, $w_n$ converges strongly (up to subsequences) in $H^1(\G_{N+1,t})$ to a ground state $w$. Thus, by \eqref{simili} and \eqref{similip},
\[
\mu\LL(u_n) = \int_{\G_n} |u_n|^p\dx - \int_{\G_n} |u_n'|^2\dx  \to 
\int_{\G_{N+1,t}} |w|^p\dx - \int_{\G_{N+1,t}} |w'|^2\dx  = \mu \LL(w),
\]
when $n\to \infty$, as we wanted to prove.
\end{proof}

\begin{lemma}
\label{lambdav}
Let $\phi_\mu$ be the soliton of mass $\mu$ on $\R$ centered at $0$, so that $E(\phi_\mu,\R) = \ee_\R(\mu)$, and let $v_n$ be the ground state on $\G_n$ found in Theorem \ref{fattained}. Then
\[
\LL(v_n) \to \LL(\phi_\mu)
\]
as $n\to \infty$.
\end{lemma}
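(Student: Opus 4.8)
The plan is to show that, as $n\to\infty$, the ground state $v_n$ concentrates on the self--loop $\es_n$ (whose length $|\es_n|\to\infty$ by Lemma \ref{moving}) and there converges, after transplantation to $\R$, to the soliton $\phi_\mu$; the convergence $\LL(v_n)\to\LL(\phi_\mu)$ will then follow from the convergence of the two integrals $\int|v_n|^p$ and $\int|v_n'|^2$. As preliminary facts: since $v_n\in\Vdue$ attains its $L^\infty$ norm on $\es_n$, writing $\delta_n:=\min_{\es_n}v_n$ we have $\mu\ge\int_{\es_n}|v_n|^2\ge|\es_n|\,\delta_n^2$, so $\delta_n\to0$; moreover $(v_n)$ is bounded in $H^1(\G_n)$ by Lemma \ref{bdedseq}, whence $\|v_n\|_\infty\le C$, and $E(v_n,\G_n)=\ee_\R(\mu)-\eps_n\to\ee_\R(\mu)$.

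The heart of the matter is to prove that $v_n$ localizes on $\es_n$: setting $R_n:=\G_n\setminus\es_n$ and $m_n:=\|v_n\|_{L^2(\es_n)}^2$, I would show $m_n\to\mu$ and $\int_{R_n}|v_n|^p\dx\to0$, $\int_{R_n}|v_n'|^2\dx\to0$. Using the monotone structure of the loop--peaked ground state (it decreases along $\er_n$ starting from $\ww$ and along the half--lines and the terminal edge starting from $\vv$, the maximum lying on $\es_n$), one gets $v_n\le\delta_n$ on $R_n$, hence $\int_{R_n}|v_n|^p\le\delta_n^{p-2}\mu\to0$ and therefore $E(v_n|_{R_n},R_n)\ge-\frac1p\int_{R_n}|v_n|^p\ge-o(1)$. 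On the other hand, unrolling $\es_n$ to an interval, replacing $v_n|_{\es_n}$ by its symmetric rearrangement $\widehat{v_n}$ (admissible since a.e. value in $(\delta_n,\|v_n\|_\infty)$ is attained at least twice on the loop) and gluing a tail of length one joining $\delta_n$ to $0$, I obtain a function $w_n$ on $\R$ of mass $m_n+o(1)$ and energy $\le E(v_n|_{\es_n},\es_n)+o(1)$, so that $E(v_n|_{\es_n},\es_n)\ge\ee_\R(m_n)+o(1)$. Summing the two estimates and using $E(v_n,\G_n)=\ee_\R(\mu)-\eps_n$ yields $\ee_\R(m_n)\le\ee_\R(\mu)+o(1)$, while $m_n\le\mu$ gives $\ee_\R(m_n)\ge\ee_\R(\mu)$; since $\ee_\R$ is continuous and strictly decreasing, $m_n\to\mu$. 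Then $E(v_n|_{\es_n},\es_n)\ge\ee_\R(\mu)+o(1)$ forces $E(v_n|_{R_n},R_n)\le o(1)$, whence $\frac12\int_{R_n}|v_n'|^2=E(v_n|_{R_n},R_n)+\frac1p\int_{R_n}|v_n|^p\to0$.

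With $m_n\to\mu$ in hand, the functions $w_n$ constructed above (renormalized to mass $\mu$) satisfy $E(w_n,\R)\to\ee_\R(\mu)$ and are even with maximum at the origin; by compactness of minimizing sequences for $\ee_\R(\mu)$ (classical, or via Theorem \ref{propgen} on $\R$) and uniqueness of the soliton in the subcritical regime (Remark \ref{remsol}), $w_n\to\phi_\mu$ strongly in $H^1(\R)$. Since symmetric rearrangement preserves $L^p$ norms and the tail contributes $o(1)$, we get $\int_{\es_n}|v_n|^p=\int_\R|w_n|^p+o(1)\to\int_\R|\phi_\mu|^p$, and with $\int_{R_n}|v_n|^p\to0$ this gives $\int_{\G_n}|v_n|^p\dx\to\int_\R|\phi_\mu|^p\dx$. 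The gradient term I would recover not from Pólya--Szegő (which yields only an inequality) but from the energy identity: using $E(v_n,\G_n)=\ee_\R(\mu)-\eps_n$ and $E(\phi_\mu,\R)=\ee_\R(\mu)$,
\[
\tfrac12\int_{\G_n}|v_n'|^2\dx=\ee_\R(\mu)-\eps_n+\tfrac1p\int_{\G_n}|v_n|^p\dx\ \longrightarrow\ \ee_\R(\mu)+\tfrac1p\int_\R|\phi_\mu|^p\dx=\tfrac12\int_\R|\phi_\mu'|^2\dx .
\]
Both integrals in $\mu\LL(v_n)=\int_{\G_n}|v_n|^p-\int_{\G_n}|v_n'|^2$ thus converge to the corresponding ones for $\phi_\mu$, giving $\LL(v_n)\to\LL(\phi_\mu)$.

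The main obstacle is the localization step, i.e. ruling out that a fixed fraction of mass remains on $R_n$. The clean route above rests on the structural fact that $v_n$ is monotone away from its peak, hence $\le\delta_n$ on $R_n$, which is exactly what makes $\int_{R_n}|v_n|^p$ negligible and $E(v_n|_{R_n},R_n)$ asymptotically nonnegative. Should such a shape description not be directly available, the alternative is to bound $E(v_n|_{R_n},R_n)$ from below by transplanting $R_n$ to $\R$ as well (exploiting its $N\ge2$ half--lines, so that a.e. small value has at least two preimages, together with the extra--edge device of Lemma \ref{loopbelow} to absorb the boundary value $\delta_n$), obtaining $E(v_n|_{R_n},R_n)\ge\ee_\R(\mu-m_n)+o(1)$; then the strict superadditivity $\ee_\R(m_n)+\ee_\R(\mu-m_n)>\ee_\R(\mu)$ for $m_n\in(0,\mu)$, compared with $E(v_n,\G_n)\to\ee_\R(\mu)$, forces $m_n\to0$ or $m_n\to\mu$, the former being excluded since it would give $\|v_n\|_\infty\to0$ and hence $E(v_n,\G_n)\ge-\frac1p\|v_n\|_\infty^{p-2}\mu\to0>\ee_\R(\mu)$. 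The delicate point in this second route is the preimage count on the terminal edge, whose tip may carry a one--sided local maximum.
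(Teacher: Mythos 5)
Your overall strategy --- transplant $v_n$ to the line, identify the limit as $\phi_\mu$ via the almost--minimality $E\to\ee_\R(\mu)$, transfer the $L^p$ norm by equimeasurability, and recover the kinetic term from the energy identity rather than from P\'olya--Szeg\H{o} --- is exactly the paper's, and that endgame is correct. The genuine gap is in your localization step. Your primary route rests on the claim that $v_n\le\delta_n$ on $R_n=\G_n\setminus\es_n$, justified only by an appeal to ``the monotone structure of the loop--peaked ground state''. No such structure is established in the paper, and it is not obvious: on the terminal edge $\ti$ the Kirchhoff condition forces $v_n'=0$ at the tip, so $v_n|_{\ti}$ has a critical point there which is a local maximum whenever $v_n(\mathrm{tip})>\lambda^{1/(p-2)}$ (cf.\ \eqref{max}); a priori $v_n$ could carry a secondary bump on $\ti$ with values far above $\delta_n\to0$, and the monotonicity of $v_n$ along $\er_n$ from $\ww$ to $\vv$ is likewise unproved. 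Ruling these out would require a dedicated rearrangement argument in the spirit of Lemma \ref{noB2} or Lemma \ref{lem_tad}, which you do not supply. Your fallback route has the same defect in a different place: to get $E(v_n|_{R_n},R_n)\ge\ee_\R(\mu-m_n)+o(1)$ via \eqref{Ncontr} you need almost every value of $v_n|_{R_n}$ to have two preimages in $R_n$, and --- as you yourself note --- the tip of $\ti$ (and the endpoint $\ww$ of $\er_n$) can carry values attained only once; weakening to the one--preimage bound $\ee_{\R^+}(\mu-m_n)$ destroys the strict superadditivity you need when $m_n$ is close to $\mu$, so that variant does not close either.

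The paper sidesteps localization entirely, and you could too with the tools you already invoke: attach a unit pendant edge at a point $p_n\in\es_n$ where $v_n=\delta_n$, extend $v_n$ linearly to $0$ on it, and take the symmetric rearrangement $\widehat w_n$ on $\R$ of the \emph{whole} resulting function $w_n$. Every value in $(\delta_n,\|v_n\|_\infty)$ already has two preimages on the self--loop alone (its maximum is the global maximum and its minimum is $\delta_n$), and the pendant supplies the second preimage for values in $(0,\delta_n)$ --- no information about the shape of $v_n$ on $R_n$ is needed. Then $\ee_\R(\mu+o(1))\le E(\widehat w_n,\R)\le E(v_n,\G_n)+o(1)\to\ee_\R(\mu)$ squeezes $\widehat w_n$ into a minimizing sequence on $\R$, hence $\widehat w_n\to\phi_\mu$ in $H^1(\R)$, and equimeasurability gives $\int_{\G_n}|v_n|^p\dx\to\int_\R|\phi_\mu|^p\dx$ directly, with the kinetic term then following from your energy identity. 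So the conclusion is reachable along your lines, but as written the proof has a hole at its central step.
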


\begin{proof} As in the preceding lemma,  $\delta_n := \min_{\es_n} v_n \to 0$ as $n\to \infty$.  Denote by $p_n \in \es_n$ a point where $u(p_n) = \delta_n$. We attach a terminal edge of unitary length to $\G_n$ at $p_n$ and we call $\G_n'$ the new graph. Following step by step the procedure used in the proof of Lemma \ref{loopbelow} we can construct a function $w_n \in H^1(\G_n')$ such that, as $n\to \infty$,
\begin{itemize}
\item[$i)$] $\nu_n := \|w_n\|_{L^2(\G_n')}^2 = \mu + o(1)$
\item[$ii)$] $\|w_n'\|_{L^2(\G_n')} = \|v_n'\|_{L^2(\G_n)} +o(1), \qquad  \|w_n\|_{L^p(\G_n')} = \|v_n\|_{L^p(\G_n)}+o(1)$
\item[$iii)$] $\# \{x \in \G_n'\; : \; w_n(x) = \tau \} \ge 2 \quad\text{for almost every } \tau \in (0,  \|w_n\|_\infty)$.
\end{itemize}
Now let $\widehat w_n$ be the symmetric rearrangement of $w_n$ on $\R$. By standard properties (see e.g. \cite{ast}) and the preceding relations,
\[
E(\widehat w_n,\R)  \le E(w_n,\G_n') = E(v_n,\G_n) + o(1) = \ee_\R(\mu) + o(1).
\]
On the other hand, $E(\widehat w_n,\R) \ge \ee_\R(\nu_n) = \ee_\R(\mu+o(1)) = \ee_\R(\mu) +o(1)$. Therefore
$E(\widehat w_n,\R) \to \ee_\R(\mu)$ as $n\to \infty$ from which one easily deduces that
\[
\widehat w_n \to \phi_\mu \quad\text{in } H^1(\R)
\]
as $n\to \infty$. Finally we observe that 
\begin{align*}
\frac12 \int_{\G_n} |v_n'|^2\dx &=  \ee_\R(\mu) + \frac1p \int_{\G_n} |v_n|^p\dx +o(1) =
\ee_\R(\mu) + \frac1p \int_{\G_n'} |w_n|^p\dx +o(1) \\
&= \ee_\R(\mu) + \frac1p \int_\R |\widehat w_n|^p\dx +o(1) = 
\ee_\R(\mu) + \frac1p \int_\R |\phi_\mu|^p\dx +o(1) \\
& = \frac12 \int_\R |\phi_\mu'|^2\dx +o(1)
\end{align*}
and from this it follows that as $n \to \infty$,
\[
\mu\LL(v_n) = \int_{\G_n} |v_n|^p\dx - \int_{\G_n} |v_n'|^2\dx  \to 
 \int_\R |\phi_\mu|^p\dx - \int_\R |\phi_\mu'|^2\dx  = \mu \LL(\phi_\mu),
\]
as we wanted to prove.
\end{proof}

\begin{proof}[End of the proof of Theorem \ref{thm4}] For every $\mu >0$ we can choose $r$ so large and $\eps$ so small that,
by Theorem \ref{fattained}, the graph $\G:=\G(r,\overline{s}, \overline{t})$ admits two ground states $u, v \in H_\mu^1(\G)$. 
Of course $u$ and $v$ satisfy \eqref{equation} with Lagrange multipliers $\LL(u)$ and $\LL(v)$ respectively.

Let $w$ be the ground state in $H_\mu^1(\G_{N+1,t})$ found in Lemma \ref{lambdau}, and recall that $E(w,\G_{N+1,t}) = \ee_{\G_{N+1,t}}(\mu) = \ee_\R(\mu)$ by construction.

By Lemma \ref{lambdau} and \ref{lambdav},  taking (if necessary) $r$ even larger and $\eps$ even smaller,
we can make sure that $\LL(u)$ is as close as we wish to $\LL(w)$, and that $\LL(v)$ is similarly close to $\LL(\phi_\mu)$.

Therefore, if we manage to show that $\LL(w) \ne \LL(\phi_\mu)$, then we obtain at the same time that $u \ne v$ and that $\LL(u) \ne \LL(v)$, concluding the proof of Theorem \ref{thm4}.

To this purpose, we assume that $\LL(w)= \LL(\phi_\mu) =: \lambda$ and we seek a contradiction. Using the fact that  $w$ and $\phi_\mu$ have the same mass, the same energy and the same Lagrange multiplier, one shows as in Lemma \ref{same}  that 
\[
 \int_{\G_{N+1,t}} |w'|^2 \dx = \int_\R |\phi_\mu'|^2 \dx \qquad\hbox{and}\qquad
 \int_{\G_{N+1,t}} |w|^p \dx = \int_\R |\phi_\mu|^p \dx .
\]	
Let $C$ be the mechanical energy conserved by $w$ on the bounded edge of $\G_{N+1,t}$. Integrating the energy conservation equality we thus see that
\begin{align*}
\frac12\lambda\mu &= \frac12\int_\R |\phi_\mu'|^2 \dx +\frac1p \int_\R |\phi_\mu|^p \dx \\ 
&= \frac12\int_{\G_{N+1,t}} |w'|^2 \dx + \frac1p\int_{\G_{N+1,t}} |w|^p \dx = \frac12\lambda\mu + C\ell,
\end{align*}
namely that $C=0$. This means that on the bounded edge $w$ coincides with the soliton $\phi_\mu$.  Plainly, also on each half-line $w$ coincides with $\phi_\mu$. Then, by the Kirchhoff condition, the  vertex of ${\G_{N+1,t}}$ is an absolute maximum point of $w$, and this violates the fact already proved that $w$ achieves its absolute maximum at the tip of the bounded edge.
\end{proof}

\appendix
\section{Appendix}

We prove here two technical results that are used in the preceding Section. We start by describing the location of maximum points of ground states on the graph $\G = \G(r,s,t)$ of Figure \ref{grafone}.

\begin{lemma} 
\label{noB2} Let $u$ be a ground state of mass $\mu$ on $\G$ such that $E(u,\G) < \ee_\R(\mu)$. Then $u$ attains its maximum in $\es \cup \ti$.
\end{lemma}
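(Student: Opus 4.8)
The plan is to argue by contradiction and to reduce the statement to a count of preimages combined with \eqref{Ncontr} for $N=2$. First I would record the shape of $u$ on the pieces of $\G$. On each half-line $u$ solves \eqref{equation} and vanishes at infinity, so (as in Lemma \ref{qualit1}) it is strictly decreasing and its maximum along the half-lines is attained at $\vv$; since $\vv\in\ti$, if the global maximum were attained on a half-line we would already be done. On the terminal edge $\ti$ a standard rearrangement argument shows that $u$ increases strictly towards the tip, which is therefore a critical point, so $\max_\ti u=:m_\ti>a$, where $a:=u(\vv)$. On the self-loop $\es$, by reflection $u$ is symmetric about the midpoint and monotone on each half, with a single critical value there. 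It thus remains to rule out the only remaining possibility, namely that the maximum $M=\|u\|_\infty$ is attained only at an interior point $x_0$ of $\er$ (recall that $\es\cup\ti$ already contains the vertices $\ww$ and $\vv$), with $u<M$ on $\es\cup\ti$ and $a<M$, $b<M$, where $b:=u(\ww)$.

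Next I would exploit the Kirchhoff condition at $\ww$. Since $x_0$ is the global maximum, $u$ descends from $M$ to $b$ along $\er$ and reaches $\ww$ with a derivative pointing back into $\er$ that is strictly positive; as $\ww$ has degree three (the $\er$-end and the two ends of $\es$), Kirchhoff forces the two derivatives of $u$ into $\es$ to be negative. Hence $u$ dips on the loop: $m_0:=\min_\es u<b$, and $u$ attains every value in $(m_0,b)$ at least twice on $\es$.

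The core of the proof is then to show that $\#\{x\in\G:u(x)=\tau\}\ge2$ for a.e. $\tau\in(0,M)$. For $\tau<a$ the $N\ge2$ half-lines already provide two preimages; for $\tau>\max(a,b)$ the two monotone branches of $u$ on $\er$ provide two; and for $\tau\in(m_0,b)$ the dip on $\es$ provides two. The only delicate window is $\tau\in(a,m_0)$ in the case $a<m_0<b$, where a priori only the ascending branch of $\er$ is available. Here I would use the mechanical energy conserved on each edge, with potential $W(\sigma)=\frac1p\sigma^p-\frac\lambda2\sigma^2$; since $\lambda>0$ (the half-line decay forces it), $W$ is U-shaped with minimum at $u_*=\lambda^{1/(p-2)}$. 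Evaluating the conservation law at the critical points gives $W(m_\ti)\ge W(a)$ (from $\ti$, with $a<m_\ti$) and $W(m_0)\ge W(b)$ (from $\es$, with $m_0<b$); because $W$ decreases before $u_*$ and increases after, the first inequality forces $m_\ti>u_*$ and the second $m_0<u_*$. Therefore $\tau\le m_0<u_*<m_\ti$, so the strictly increasing edge $\ti$ supplies the missing second preimage, closing the count.

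Finally, with at least two preimages a.e., \eqref{Ncontr} with $N=2$ (equivalently, the symmetric rearrangement onto $\R$ and the Polya--Szeg\H{o} inequality) gives $E(u,\G)\ge\ee_\R(\mu)$, contradicting the hypothesis $E(u,\G)<\ee_\R(\mu)$; hence the maximum lies in $\es\cup\ti$. I expect the main obstacle to be precisely the third step: the naive preimage count breaks down in the window $(a,m_0)$, and the argument relies entirely on the phase-plane input that forces both the downward dip of $u$ on $\es$ and the ordering $m_0<u_*<m_\ti$. Establishing the supporting shape facts (monotonicity on $\ti$ towards the tip, symmetry and a single critical value on $\es$, and monotonicity of $u$ from $x_0$ to $\ww$) is the other place where care is needed.
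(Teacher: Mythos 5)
Your overall strategy --- reduce everything to ``at least two preimages for a.e.\ level'' and invoke \eqref{Ncontr} with $N=2$ --- reproduces only one of the two cases in the paper's proof, and the step you yourself flag as delicate is where the argument genuinely breaks. To supply the missing second preimage for $\tau\in(a,m_0)$ you need the terminal edge to rise above $\tau$, and your route to this is the chain $m_0<u_*<m_\ti$, which rests on two unproved shape claims: that $u$ increases towards the tip of $\ti$ (so that $m_\ti=u(\mathrm{tip})>a$, whence $W(m_\ti)\ge W(a)$ forces $m_\ti>u_*$), and that $u$ decreases monotonically from the interior maximum $x_0$ to $\ww$ (so that Kirchhoff yields a dip on $\es$, hence an interior critical point with $W(m_0)\ge W(b)$ and $m_0<u_*$). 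Neither is available for free on this graph: the ODE with the Neumann condition at the tip is perfectly consistent with $u$ having a local \emph{minimum} there (so $u(\mathrm{tip})<a$ and $\ti$ contributes nothing in the window), and with $u$ oscillating on $\er$; ruling these out would require a rearrangement argument of essentially the same difficulty as the lemma itself. Two further slips: Kirchhoff at $\ww$ controls only the \emph{sum} of the two outgoing derivatives into $\es$, not their individual signs; and the case $\min_{\es}u=u(\ww)$ (no dip at all) is absent from your case list, so the interval $(a,b)$ is then uncovered.

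The paper's proof avoids all of this. After placing the maximum on the compact core, it splits according to whether $\min_{[x_0,r]\cup\es}u\le a$ or $>a$. In the first case it runs exactly your preimage count (the $N\ge2$ half-lines below level $a$, the two sides of the maximum point above it) and applies \eqref{Ncontr}. In the second case --- precisely your delicate window --- it does \emph{not} attempt a global preimage count. Instead it picks the level $\tau$ whose superlevel set inside the portion of $\er$ beyond the last point at height $a$ together with $\es$ has measure exactly $s$, rearranges that superlevel set symmetrically onto $\es$ and the remaining part decreasingly onto the corresponding subinterval of $\er$, and gets a \emph{strict} energy decrease because the remaining part meets $\es$ in positive measure (values attained at least twice, strict Polya--Szeg\H{o}), contradicting minimality. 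To repair your write-up you would either have to adopt this local rearrangement for the second case, or first prove the monotonicity statements on $\ti$, $\er$ and $\es$ that you are currently assuming.
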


\begin{proof} The function $u$ attains its maximum in $\er \cup \es \cup \ti$ by Proposition 2.5 in \cite{ast2}. Assume that $u$ attains it in $\er$ only, and call $p$ the maximum point.
Set $\alpha = u(\vv)$ and identify the part of $\er$ between $p$ and $\ww$ with the interval $[p,r]$. Two alternatives may occur.
\medskip

\noindent $i)$ If $\displaystyle\min_{[p,r] \cup \es} u \le\alpha$, then
\[
\# \{x \in \G\; : \; u(x) = \tau \} \ge 2 \quad\text{for every } \tau \in (\alpha, \|u\|_\infty),
\]
because each of these values $\tau$ is attained at least once in $\er$ between $\vv$ and $p$ and at least once  in $[p,r] \cup \es$. Since there are $N\ge 2$ half-lines starting from $\vv$ we conclude that 
\[
\# \{x \in \G_r\; : \; u(x) = \tau \} \ge 2 \quad\text{for every } \tau \in (0, \|u\|_\infty),
\]
and then, by  \cite[Lemma 2.1]{astbound}), $E(u,\G) \ge \ee_\R(\mu)$, contrary to the assumptions.
\medskip

\noindent $ii)$ If $\displaystyle\min_{[p,r] \cup \es} u >\alpha$, let $q \in [0,p)$ be the last point where $u(q) = \alpha$. Since $u$ solves \eqref{equation}, its level sets have measure $0$ and hence there exists $\tau$ such that 
\[
A_\tau = \{ x \in [q,r] \cup {\es} \, : \; u(x) \ge \tau\}\
\]
has measure $s$. Note that $\tau > \alpha$ because $u(x) > \alpha$ in $(q,r]\cup \es$, that has measure greater than $s$, and that 
\begin{equation}
\label{posmeas}
|A_\tau \cap [q,r]| >0, 
\end{equation}
since otherwise it would be $A_\tau = \es$, and then $u$ would attain its maximum in $\es$, contradicting the assumption. This also shows that $\min_{\es} u <\tau$. Then
\[
\# \{x \in A_\tau \; : \; u(x) = \sigma \} \ge 2 \quad\text{for every } \sigma \in (\tau, \|u\|_\infty).
\]
We now consider the symmetric rearrangement $\widehat u$ of $u_{|_{A_\tau}}$, that can be seen as a function on $\es$ and  satisfies $\min_{\es} \widehat u = \tau$. Finally we consider the decreasing rearrangement $u^*$ of $u$ restricted to $[q,r]\cup {\es} \setminus A_\tau$. This is a function defined in $[0, r-q]$ and  satisfies $u^*(0) = \tau$ and $u^*(r-q) = \alpha$. Note that due to \eqref{posmeas},  $[q,r]\cup {\es} \setminus A_\tau$ intersects $\es$ in a set of positive measure and therefore every value attained by $u$ in this intersection is attained at least twice. Thus, by the usual properties of rearrangements, $\|(u^*)'\|_{L^2(0,r-q)} < \|u'\|_{L^2([q,r]\cup {\es} \setminus A_\tau)}$. Defining $v:\G \to \R$ as
\[
v(x) = \begin{cases} \widehat u(x) & \text{ if } x \in {\es} \\
u^*(r-x) & \text{ if } x \in [q,r] \\
u(x) & \text{ elsewhere in }  \G \end{cases}
\]
we obtain that $v \in H_\mu^1(\G)$ and $E(v,\G) < E(u,\G)$, contradicting the minimality of $u$.
\end{proof}		
		
The next result is used in the proof of Lemma \ref{lambdau}.

\begin{lemma}
\label{stiro} Let $\es$ be a self-loop of length $s$ and let $u$ be a nonnegative function in $H^1(\es)$. 
Set $\delta = \min_{\es } u$ and let $p$ be a fixed point on $\es$. Then there exists a function $\psi\in H^1(\R^+)$ such that
\begin{itemize}
\item[i)] $\psi(0) = u(p)$;
\item[ii)] $\displaystyle \int_0^{+\infty} |\psi|^2\dx = \int_{\es} |u|^2\dx +  \frac12\delta^2$;
\item[iii)] $\displaystyle  \int_0^{+\infty} |\psi'|^2\dx \le \int_{\es} |u'|^2\dx  +  \frac12 \delta^2$;
\item[iv)] $\displaystyle\int_0^{+\infty}|\psi|^p\dx =\int_{\es}  |u|^p\dx  + \frac1p \delta^p$.
\end{itemize}
\end{lemma}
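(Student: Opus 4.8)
The plan is to assemble $\psi$ from two pieces: a \emph{core} on $[0,s]$, obtained by rearranging $u$, glued to an exponentially decaying \emph{tail} on $[s,+\infty)$ that brings $\psi$ down to $0$ at infinity. The shape of the tail is forced by the correction terms: taking $\psi(x)=\delta\,e^{-(x-s)}$ for $x\ge s$ gives $\int_s^{+\infty}|\psi|^2\dx=\frac12\delta^2$, $\int_s^{+\infty}|\psi'|^2\dx=\frac12\delta^2$ and $\int_s^{+\infty}|\psi|^p\dx=\frac1p\delta^p$, which are \emph{exactly} the extra quantities appearing in ii), iii) and iv). For this decomposition to do the job, the core must (a) start at the value $u(p)$, so that i) holds and the eventual gluing to the rest of the graph is continuous; (b) end at the value $\delta$, so that it joins the tail continuously at $x=s$; (c) be equimeasurable with $u$, so that the $L^2$ and $L^p$ norms of $u$ are reproduced exactly; and (d) have Dirichlet energy no larger than that of $u$, so that iii) follows, the tail already saturating the available $\frac12\delta^2$.

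The core is a rearrangement of $u$ onto $[0,s]$. Setting $a:=u(p)$, when $a$ equals the maximum of $u$ on $\es$ — which already covers the situation arising in Lemma \ref{lambdau} — one may simply take the core to be the decreasing rearrangement $u^*$: it runs from $\|u\|_{L^\infty(\es)}=a$ down to $\delta$, is equimeasurable with $u$, and satisfies $\|(u^*)'\|_2\le\|u'\|_2$ by the Polya--Szeg\H{o} inequality, so (a)--(d) hold at once. For a general point $p$, with $a$ possibly below the maximum, the core cannot be monotone, and I would instead split $u$ at the level $a$: on the superlevel set $\{u\ge a\}$ I would place a symmetric ``bump'' rearrangement on $[0,\rho]$, where $\rho=|\{u\ge a\}|$, taking the value $a$ at both endpoints and the maximum at the center; on the sublevel set $\{u\le a\}$ I would place a decreasing rearrangement on $[\rho,s]$ running from $a$ down to $\delta$. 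Concatenating these gives a continuous core equal to $a$ at $x=0$ and to $\delta$ at $x=s$, equimeasurable with $u$.

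The key point, and the main obstacle, is property (d), the Dirichlet bound for this non-monotone core. On the sublevel piece the standard Polya--Szeg\H{o} inequality applies directly. On the superlevel piece I would invoke the symmetric-rearrangement inequality $\|(\widehat v)'\|_2\le\|v'\|_2$ recalled in Section \ref{state}, which is available precisely because $u$ lives on a \emph{loop}: a continuous function on a circle crosses every level strictly between its minimum and its maximum an even number of times, hence at least twice, so $u$ restricted to $\{u\ge a\}$ has at least two preimages for almost every value in $(a,\|u\|_\infty)$. Adding the two bounds over the disjoint regions $\{u\ge a\}$ and $\{u\le a\}$ yields $\|\tilde u'\|_2\le\|u'\|_2$ for the core $\tilde u$. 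Once the core is in hand, i)--iv) are immediate: i) and the matching at $x=s$ hold by construction, ii) and iv) follow from equimeasurability of the core together with the exact tail integrals above, and iii) follows by adding the core bound to the tail's contribution of $\frac12\delta^2$.
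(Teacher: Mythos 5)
Your construction coincides with the paper's own proof: the core on $[0,s]$ is obtained exactly as in the paper by a symmetric rearrangement of $u$ on the superlevel set $\{u\ge u(p)\}$ glued to a decreasing rearrangement on the complement (with the same two-preimage justification coming from the loop topology), and the tail $\delta e^{-(x-s)}$ produces precisely the correction terms $\frac12\delta^2$ and $\frac1p\delta^p$. The proposal is correct and essentially identical to the paper's argument.
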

		
\begin{proof} Let $\tau = u(p)$, consider the level set $A_\tau =\{x \in {\es} \; : \; u(x) \ge \tau\}$ and let $\ell = |A_\tau| \ge 0$.  If $\ell >0$, then define $\widehat u:[0,\ell] \to \R$ to be the symmetric rearrangement of $u$ restricted to $A_\tau$. Thus,
\begin{equation}
\label{equi}
\int_0^\ell |\widehat u|^q\dx = \int_{A_\tau} |u|^q\dx \quad \forall q\in [1,+\infty)
\end{equation}
and, since $\# \{u^{-1}(\sigma)\} \ge 2$ for almost every $\sigma \in (\tau,\|u\|_\infty)$ as  $\es$ is a self-loop,  
\begin{equation}
\label{polya}
\int_0^\ell |(\widehat u)'|^2\dx \le \int_{A_\tau} |u'|^2\dx.
\end{equation}
Next, define $ u^*:[\ell,r] \to \R$ to be the decreasing rearrangement of $u$ restricted to ${\es}\setminus A_\tau$, that satisfies the two above properties as well. We note that $\widehat u(0) = u(p) = \widehat u(\ell)$ and that $u^*(\ell) = \widehat u(\ell)$, $u^*(s) = \delta$.
Finally define $\psi:\R^+ \to \R$ by
\[
\psi(x) = \begin{cases} \widehat u(x) & \text{ if } x \in [0,\ell) \\
u^*(x) & \text{ if } x \in [\ell, s] \\
\delta e^{s-x} & \text{ if } x >s \end{cases}
\]
(if $\ell = 0$ then simply neglect $\widehat u$ in the definition of $\psi$). Plainly, $\psi\in H^1(\R^+)$ and $\psi(0) = u(p)$. Furthermore, by \eqref{equi} and a direct computation,
\[
\int_0 ^{+\infty}| \psi |^q\dx =  \int_0^\ell | \widehat u |^q\dx + \int_\ell^s | u^* |^q\dx + \frac{\delta^q}q = \int_{\es} | u |^q\dx + \frac{\delta^q}q, \quad \forall q\in [1,+\infty),
\]
while, by \eqref{polya},
\[
\int_0^{+\infty} | \psi' |^2\dx =  \int_0^\ell | (\widehat u)' |^2\dx + \int_\ell^s | (u^*)' |^2\dx + \frac{\delta^2}2 \le \int_{\es} | u' |^2\dx + \frac{\delta^2}2,
\]
from which $ii)$, $iii)$ and $iv)$ follow.
\end{proof}

\end{document}